\providecommand{\U}[1]{\protect\rule{.1in}{.1in}}
\newtheorem{theorem}{Theorem}[section]
\newtheorem{definition}[theorem]{Definition}
\newtheorem{assumption}[theorem]{Assumption}
\newtheorem{lemma}[theorem]{Lemma}
\newtheorem{proposition}[theorem]{Proposition}
\newtheorem{remark}[theorem]{Remark}
\newenvironment{proof}[1][Proof]{\noindent \textbf{#1.} }{\  \rule{0.5em}{0.5em}}
\numberwithin{equation}{section}
\begin{document}

\title{The existence and uniqueness of viscosity solution to a kind of
Hamilton-Jacobi-Bellman equations}
\author{Mingshang Hu\thanks{Zhongtai Securities Institute for Financial Studies,
Shandong University, Jinan, Shandong 250100, PR China. humingshang@sdu.edu.cn.
Research supported by NSF (No. 11671231) and Young Scholars Program of
Shandong University (No. 2016WLJH10). }
\and Shaolin Ji\thanks{Zhongtai Securities Institute for Financial Studies,
Shandong University, Jinan, Shandong 250100, PR China. jsl@sdu.edu.cn
(Corresponding author). Research supported by NSF (No. 11571203).}
\and Xiaole Xue\thanks{Zhongtai Securities Institute for Financial Studies,
Shandong University, Jinan 250100, PR China. xiaolexue1989@gmail.com,
xuexiaole.good@163.com. Research supported by NSF (Nos. 11701214 and 11801315)
and Natural Science Foundation of Shandong Province(ZR2018QA001). } }
\maketitle

\textbf{Abstract}. In this paper, we study the existence and uniqueness of
viscosity solutions to a kind of Hamilton-Jacobi-Bellman (HJB) equations
combined with algebra equations. This HJB equation is related to a stochastic
optimal control problem for which the state equation is described by a fully
coupled forward-backward stochastic differential equation. By extending Peng's
backward semigroup approach to this problem, we obtain the dynamic programming
principle and show that the value function is a viscosity solution to this HJB
equation. As for the proof of the uniqueness of viscosity solution, the
analysis method in Barles, Buckdahn and Pardoux \cite{Baeles-BP} usually does
not work for this fully coupled case. With the help of the uniqueness of the
solution to FBSDEs, we propose a novel probabilistic approach to study the
uniqueness of the solution to this HJB equation. We obtain that the value
function is the minimum viscosity solution to this HJB equation. Especially,
when the coefficients are independent of the control variable or the solution
is smooth, the value function is the unique viscosity solution.

{\textbf{Key words}. } Dynamic programming principle, Fully coupled
forward-backward stochastic differential equations, Hamilton-Jacobi-Bellman
equation, Viscosity solution

\textbf{AMS subject classifications.} 93E20, 60H10, 35K15

\addcontentsline{toc}{section}{\hspace*{1.8em}Abstract}

\section{Introduction}

Dynamic programming principle (DPP), originated by Bellman in 1950s, is a
powerful tool to solve optimal control problems. Since then DPP and related
Hamilton-Jacobi-Bellman (HJB) equations have been intensively studied by a lot
of researchers for various kinds of stochastic optimal control problems
(see\cite{Bay1,Bay2,Buc1,Buc2,Buckdahn-Li,Li-W,
Ma-Y-2,Peng-1992,Tang1,Yong-Zhou,Zhou-1990-2,Zhou-1991} and the references therein).

%It is well-known that the dynamic programming principle (DPP) and related Hamilton-Jacobi-Bellman (HJB) equation are important tools to deal with stochastic control problems.
%There are many important results in this field (see

In this paper, we study the existence and uniqueness of the viscosity solution
to the following HJB equation,
\begin{equation}
\left\{
\begin{array}
[c]{l}%
\partial_{t}W(t,x)+\inf\limits_{u\in U}H(t,x,W(t,x),DW(t,x),D^{2}W\left(
t,x\right)  ,u)=0,\\
W(T,x)=\phi(x),
\end{array}
\right.  \label{intro-hjb}%
\end{equation}
where
\begin{equation}%
\begin{array}
[c]{l}%
H(t,x,v,p,A,u)\\
=\frac{1}{2}\mathrm{tr}[\sigma\sigma^{\intercal}%
(t,x,v,V(t,x,v,p,u),u)A]+p^{\intercal}b(t,x,v,V(t,x,v,p,u),u)\\
\ \ +g(t,x,v,V(t,x,v,p,u),u),\\
V(t,x,v,p,u)=p^{\intercal}\sigma(t,x,v,V(t,x,v,p,u),u),\\
(t,x,v,p,A,u)\in\lbrack0,T]\times\mathbb{R}^{n}\times\mathbb{R}\times
\mathbb{R}^{n}\times\mathbb{S}^{n}\times U.
\end{array}
\label{intro-h}%
\end{equation}
To solve (\ref{intro-hjb}) we need to find the solution $V$ of the algebra
equation in (\ref{intro-h}) first. Once $V$ is computed, it is directly
plugged to $H(\cdot)$ so that we obtain the true generator of (\ref{intro-hjb}).

This kind of problem has the following stochastic optimal control
interpretation. The controlled system is described by the following fully
coupled forward-backward stochastic differential equation (FBSDE):
\begin{equation}
\left\{
\begin{array}
[c]{rl}%
dX_{s}^{t,x;u}= & b(s,X_{s}^{t,x;u},Y_{s}^{t,x;u},Z_{s}^{t,x;u},u_{s}%
)ds+\sigma(s,X_{s}^{t,x;u},Y_{s}^{t,x;u},Z_{s}^{t,x;u},u_{s})dB_{s},\\
dY_{s}^{t,x;u}= & -g(s,X_{s}^{t,x;u},Y_{s}^{t,x;u},Z_{s}^{t,x;u}%
,u_{s})ds+Z_{s}^{t,x;u}dB_{s},\\
X_{t}^{t,x;u}= & x,\ Y_{T}^{t,x;u}=\phi(X_{T}^{t,x;u}),\;s\in\lbrack t,T],
\end{array}
\right.  \label{intro-fbsde}%
\end{equation}
where $B=(B_{s})_{s\in\lbrack t,T]}$ is a standard $d$-dimensional Brownian
motion and $u\in\mathcal{U}[t,T]$ is an admissible control. The cost
functional is defined by the solution to the backward stochastic differential
equation (BSDE) at time $t$ in (\ref{intro-fbsde}) and the value function of
our control problem is
\begin{equation}
W(t,x)=\underset{u\in\mathcal{U}[t,T]}{ess\inf}Y_{t}^{t,x;u}.
\label{intro-value}%
\end{equation}
The fully coupled forward-backward stochastic control problem
$(\ref{intro-fbsde})-(\ref{intro-value})$ has its own independent interests in
mathematical finance such as the stochastic differential utility,
leader-follower stochastic differential games, principal-agent problem, and so forth.

It is worth pointing out that the extra algebraic equation in (1.2) stems from
that the solution procedure of the FBSDE depends on a relationship between the
solution $Z$ of the backward SDE and the diffusion coefficients of the forward
SDE. This fact is first revealed in the famous ``Four Step Scheme" for solving
an FBSDE (see \cite{MPY,Ma-Y}). Essentially, this algebraic equation is
exactly the First Step of the ``Four Step Scheme" (see \cite{MPY,Ma-Y}).

It is well-known that the above value function may not necessarily smooth
enough as we want, that is the reason why the researchers introduce the notion
of viscosity solution (see \cite{Crandall-lecture,Lions2} and references
therein). When the coefficients $b$ and $\sigma$ of (\ref{intro-fbsde}) are
independent of the variables $y$ and $z$, Peng \cite{Peng-1992, Peng-lecture}
first obtained that the above defined $W$ is a viscosity solution to
(\ref{intro-hjb}). For this case, the uniqueness of the viscosity solution to
(\ref{intro-hjb}) can be obtained by applying the method in Barles, Buckdahn
and Pardoux \cite{Baeles-BP} (see Theorem 5.3 in \cite{Buckdahn-Li} for details).

When $b$ and $\sigma$ depend on $y$ and $z$ in (\ref{intro-fbsde}), the
control system (\ref{intro-fbsde}) becomes a fully coupled FBSDE and the
corresponding HJB equation (\ref{intro-hjb}) becomes a fully nonlinear
parabolic partial differential equation (PDE) combined with an algebra
equation which leads to the solvability and uniqueness of (\ref{intro-hjb})
being extraordinary difficult. Note that when (\ref{intro-fbsde}) is
independent of the control variable $u$, the HJB equation (\ref{intro-hjb})
degenerates to a semilinear parabolic PDE. In fact, even for this extreme
case, the well-posedness of (\ref{intro-hjb}) is still an open problem which
is proposed by Peng \cite{Peng99}. Recently, Li and Wei \cite{Li-W}, Li
\cite{Li jun} proved that $W$ is a viscosity solution to (\ref{intro-hjb})
under the monotonicity conditions for $b$, $\sigma$, $g$ and $\phi$. As for
the uniqueness of the viscosity solution, there are only a few results for
some special cases. For instance, when $b$, $\sigma$, $g$ are independent of
control variable $u$, and $\sigma$ is independent of $y$ and $z$, by applying
the method in Barles, Buckdahn and Pardoux \cite{Baeles-BP}, Wu and Yu
\cite{Wu-Y} proved that $W$ is the unique viscosity solution to
(\ref{intro-hjb}) under the monotonicity conditions for $b$, $\sigma$, $g$ and
$\phi$ in the space of all continuous functions which are Lipschitz continuous
in $x$.

So the existence and uniqueness of the viscosity solution to (\ref{intro-hjb})
is an interesting and challenging problem. In this paper, we apply the
probabilistic approach to deal with this problem. In more details, with the
help of the value function of the above stochastic optimal control problem and
the existence and uniqueness of the solution to the controlled system
(\ref{intro-fbsde}), we attacks this difficult problem, especially the
uniqueness part. Before we focus on the uniqueness results, we need to deal
with the following problems.

The first problem is the well-posedness of the fully coupled forward-backward
controlled system (\ref{intro-fbsde}). There are many literatures on the
well-posedness of fully coupled FBSDEs. When the coefficients of a fully
coupled FBSDE are deterministic and the diffusion coefficient of the forward
equation is nondegenerate, Ma, Protter and Yong \cite{MPY} proposed the
four-step scheme approach. Under some monotonicity conditions, Hu and Peng
\cite{Hu-Peng95} first obtained an existence and uniqueness result which was
generalized by Peng and Wu \cite{PW}. Yong \cite{Yong1997, Yong-2010B}
developed this approach and called it the method of continuation. The fixed
point approach is due to Antonelli \cite{Antonelli}, Pardoux and Tang
\cite{Pardoux-Tang}. The readers may refer to Ma and Yong \cite{Ma-Y},
Cvitani\'{c} and Zhang \cite{Cvi-Zhang}, Ma, Wu, Zhang and Zhang
\cite{Ma-WZZ}, Yong and Zhou \cite{Yong-Zhou} for the FBSDE theory. In this
paper, we adopt the fixed point approach which is given in Hu, Ji and Xue
\cite{Hu-JX}.

The second problem is the existence of the viscosity solution to
(\ref{intro-hjb}). We obtain that value function $W$ defined in
(\ref{intro-value}) is a viscosity solution to the HJB equation
(\ref{intro-hjb}) under the assumption in Hu, Ji and Xue \cite{Hu-JX}. As
pointed out in Remark \ref{re-mon}, our proofs still hold under monotonicity
conditions. Comparing with Li and Wei \cite{Li-W}, both papers
develop Peng's stochastic backward semigroup approach in \cite{Peng-1992,
Peng-lecture}. But our main proofs are different from the ones in Li and Wei
\cite{Li-W}. To establish the DPP, the stochastic backward semigroup is
defined by a fully coupled FBSDE.  Unlike the decoupled case,
changing $(Y,Z)$ does affect $X$ in a fully coupled FBSDE. Thus, the approach
of establishing the DPP for the decoupled forward-backward controlled system
does not work any more. By constructing two new auxiliary FBSDEs (see
(\ref{new-new-3422}) and (\ref{eq-xy-til-t})), we introduce a new approach to
prove the DPP for the fully coupled forward-backward controlled system. We
also simplify some proofs as follows. To prove the continuity property of the
value function $W(t,x)$ in $t$, we build a FBSDE (see (\ref{FBSDE-continuous}%
)) which makes the proof easier. For the combined algebra equation, we
construct a simple contraction mapping to prove the existence and uniqueness,
and obtain some properties of this algebra equation.

Then, we study the uniqueness of viscosity solution to the HJB equation
(\ref{intro-hjb}) in four cases. The first case is that $\sigma$ is
independent of $y$ and $z$. By using the method in \cite{Baeles-BP}, we prove
the uniqueness of viscosity solution to (\ref{intro-hjb}) in the space of all
continuous functions which are Lipschitz continuous in $x$. But, when $\sigma$
depends on $y$ and $z$, the method in \cite{Baeles-BP} does not work as
pointed out in Remark \ref{re-appen}. The second case is that $\sigma$ is
independent of $z$. Different from the analysis method in \cite{Baeles-BP},
for this case, we propose a novel probabilistic approach to prove the
uniqueness. In more details, we construct a new fully coupled forward-backward
stochastic control system (\ref{new-lx-18}) in which $\sigma$ only depends on
the variables $t$, $x$ and $u$. By the uniqueness result in the first case,
the value function for this new control system is the unique viscosity
solution to the HJB equation (\ref{new-lx-17}). Thanks to Proposition
\ref{pr-um}, we prove that $W$ defined in (\ref{intro-value}) is the minimum
viscosity solution to the HJB equation (\ref{intro-hjb}) in the space of all
continuous functions which are Lipschitz continuous in $x$. It is worthing to
point out that when $b$, $\sigma$, $g$ are independent of the control variable
$u$, $W$ is just the unique viscosity solution. The third case is that
$\sigma$ depends on $y$ and $z$. We construct a new decoupled forward-backward
stochastic control system (\ref{new-lx-22}). Following the similar approach in
the second case, we prove that $W$ defined in (\ref{intro-value}) is the
minimum viscosity solution to the HJB equation (\ref{intro-hjb}) in a smaller
space (see Theorem \ref{th-um2}). Especially, when $b$, $\sigma$, $g$ are
independent of the control variable $u$, $W$ is also the unique viscosity
solution. The fourth case is that the solution to HJB equation
(\ref{intro-hjb}) is smooth. We construct a new BSDE (\ref{new-asd-11}). With
the help of the comparison theorem for BSDEs, we prove that the solution is
just the value function defined in (\ref{intro-value}).

{ In order to study the well-posedness of FBSDEs, Ma, Wu, Zhang and
Zhang\cite{Ma-WZZ} proposed an important concept \textquotedblleft decoupling
field". When $b$, $\sigma$, $g$ are independent of the control variable $u$,
in Theorem \ref{th-uni-pde} or \ref{uni-pde}, the value function $W$ is a
decoupling field. For another viscosity solution $\tilde{W}$, we have verified
that $\tilde{W}=W$ by a probabilistic approach. From the perspective of
decoupling field, we have proved that $\tilde{W}$ is just a regular decoupling
field which leads to the uniqueness naturally. This reflects that the
decoupling field is a very important concept in the theory of FBSDEs.}

{Our paper is organized as follows. In section 2, we formulate our problem and
a related stochastic optimal control problem. In section 3, we prove that the
value function of the related stochastic control problem is a viscosity
solution to the HJB equation by establishing the DPP and the properties of the
value function. The uniqueness results are obtained in section 4.}

\section{The problem formulation}

Denote by $\mathbb{R}^{n}$ the $n$-dimensional real Euclidean space,
$\mathbb{R}^{k\times n}$ the set of $k\times n$ real matrices and
$\mathbb{S}^{n}$ the set of $n\times n$ symmetric matrix. Let $U$ be a
nonempty and compact subset in $\mathbb{R}^{k}$. Let $\langle\cdot
,\cdot\rangle$ (resp. $\Vert\cdot\Vert$) denote the usual scalar product
(resp. usual norm) of $\mathbb{R}^{n}$ and $\mathbb{R}^{k\times n}$. The
scalar product (resp. norm) of $M=(m_{ij})$, $N=(n_{ij})\in\mathbb{R}^{k\times
n}$ is denoted by $\langle M,N\rangle=\mathrm{tr}\{MN^{\intercal}\}$ (resp.
$\Vert M\Vert=\sqrt{MM^{\intercal}}$), where the superscript $^{\intercal}$
denotes the transpose of vectors or matrices.

We will study the existence and uniqueness of viscosity solution to the
following { HJB equation combined with an algebra equation
\begin{equation}
\label{pre-hjb}%
\begin{array}
[c]{l}%
\partial_{t}W(t,x)+\inf\limits_{u\in U}H(t,x,W(t,x),DW(t,x),D^{2}W\left(
t,x\right)  ,u)=0,\ W(T,x)=\phi(x).
\end{array}
\end{equation}
Here }$H\left(  \cdot\right)  $ is defined as follows{
\begin{equation}%
\begin{array}
[c]{l}%
H(t,x,v,p,A,u)\\
=\frac{1}{2}\mathrm{tr}[\sigma\sigma^{\intercal}%
(t,x,v,V(t,x,v,p,u),u)A]+p^{\intercal}b(t,x,v,V(t,x,v,p,u),u)\\
\ \ +g(t,x,v,V(t,x,v,p,u),u),\\
(t,x,v,p,A,u)\in\lbrack0,T]\times\mathbb{R}^{n}\times\mathbb{R}\times
\mathbb{R}^{n}\times\mathbb{S}^{n}\times U,
\end{array}
\label{pre-h}%
\end{equation}
}$V(t,x,v,p,u)$ is the solution to the following algebra equation
\[
V(t,x,v,p,u)=p^{\intercal}\sigma(t,x,v,V(t,x,v,p,u),u),
\]
where {%
\[
b:[t,T]\times\mathbb{R}^{n}\times\mathbb{R}\times\mathbb{R}^{1\times d}\times
U\rightarrow\mathbb{R}^{n}, \ \sigma:[t,T]\times\mathbb{R}^{n}\times
\mathbb{R}\times\mathbb{R}^{1\times d}\times U\rightarrow\mathbb{R}^{n\times
d},
\]%
\[
g:[t,T]\times\mathbb{R}^{n}\times\mathbb{R}\times\mathbb{R}^{1\times d}\times
U\rightarrow\mathbb{R}, \ \phi:\mathbb{R}^{n}\rightarrow\mathbb{R}.
\]
We impose the following assumption on these functions.}

\begin{assumption}
\label{assum-1} (i) $b,\sigma,g,\phi$ are continuous with respect to
$s,x,y,z,u$, and there exist constants $L_{i}>0$, $i=1,2,3$ such that%
\[
|b(s,x_{1},y_{1},z_{1},u)-b(s,x_{2},y_{2},z_{2},u)|\leq L_{1}|x_{1}%
-x_{2}|+L_{2}(|y_{1}-y_{2}|+|z_{1}-z_{2}|),
\]%
\[
|\sigma(s,x_{1},y_{1},z_{1},u)-\sigma(s,x_{2},y_{2},z_{2},u)|\leq L_{1}%
|x_{1}-x_{2}|+L_{2}|y_{1}-y_{2}|+L_{3}|z_{1}-z_{2}|,
\]%
\[
|g(s,x_{1},y_{1},z_{1},u)-g(s,x_{2},y_{2},z_{2},u)|\leq L_{1}(|x_{1}%
-x_{2}|+|y_{1}-y_{2}|+|z_{1}-z_{2}|),
\]%
\[
|\phi(x_{1})-\phi(x_{2})|\leq L_{1}|x_{1}-x_{2}|,
\]
for all $s\in\lbrack0,T],x_{i},y_{i},z_{i}\in\mathbb{R}^{n}\times
\mathbb{R}\times\mathbb{R}^{1\times d},$ $i=1,2$, $u\in U$.

 (ii)\ $\bar{\Lambda}:=8C_{2}(L)(1+T^{2})c_{1}^{2}<1$, where
$C_{2}(\cdot)$ is defined in Lemma \ref{sde-bsde} and Remark \ref{re-app-2};
$L=\max\{L_{1}, \newline\sqrt{C_{2}(L_{1})} (1-\sqrt{\Lambda})^{-1} \}$,
$\Lambda=8C_{2}(L_{1})(1+T^{2})c_{1}^{2}$, $c_{1}=\max\{L_{2,}L_{3}\}$ .
\end{assumption}

\begin{remark}
\label{linear growth}Since $U$ is compact, from the above assumption (i) we
obtain that\ $|\psi(s,x,y,z,u)|\leq L(1+|x|+|y|+|z|), $ where $L>0$ is a
constant and $\psi=b,$ $\sigma,$ $g$ and $\phi$.
\end{remark}

\begin{remark}
Since $C_{2}(\cdot)$ is increasing and $\bar{\Lambda}<1$, we have $\Lambda<1$.
If $c_{1}\downarrow0$, then it is easy to verify that $\Lambda\downarrow0$
which leads to $\bar{\Lambda}\downarrow0$. Thus, the above assumption (ii)
holds when $c_{1}$ is sufficient small.
\end{remark}

As pointed out in the introduction, this kind of problem has a stochastic
optimal control interpretation. Now we formulate this related stochastic
optimal control problem.

Let $B=(B_{t}^{1},B_{t}^{2},...,B_{t}^{d})_{0\leq t\leq T}^{\intercal}$ be a
standard $d$-dimensional Brownian motion defined on a complete probability
space $(\Omega,\mathcal{F},P)$\ over $[0,T]$. Denote by $\mathbb{F}%
=\{\mathcal{F}_{t},0\leq t\leq T\}$ the natural filtration of $B$, where
$\mathcal{F}_{0}$ contains all $P$-null sets of $\mathcal{F}$. Given
$t\in\lbrack0,T)$, denote by $\mathcal{U}[t,T]$ the set of all $\mathbb{F}%
$-adapted $U$-valued processes on $[t,T]$. For each given $p\geq1$, we
introduce the following spaces.

$L^{p}(\mathcal{F}_{t};\mathbb{R}^{n})$ : the space of $\mathcal{F}_{t}%
$-measurable $\mathbb{R}^{n}$-valued random vectors $\zeta$ such that
$\mathbb{E}[|\zeta|^{p}]<\infty$; $L^{\infty}(\mathcal{F}_{t};\mathbb{R}^{n}%
)$: the space of $\mathcal{F}_{t}$-measurable $\mathbb{R}^{n}$-valued random
vectors $\xi$ such that \ $||\xi||_{\infty}=ess\sup_{\omega\in\Omega}%
|\xi(\omega)|<\infty; $ $L_{\mathcal{F}}^{p}(t,T;\mathbb{R}^{n})$: the space
of $\mathbb{F}$-adapted $\mathbb{R}^{n}$-valued stochastic processes on
$[t,T]$ such that \ $\mathbb{E}[\int_{t}^{T}|f(r)|^{p}dr]<\infty; $
$L_{\mathcal{F}}^{\infty}(t,T;\mathbb{R}^{n})$: the space of $\mathbb{F}%
$-adapted $\mathbb{R}^{n}$-valued stochastic processes on $[t,T]$ such that
\ $||f(\cdot)||_{\infty}=ess\sup_{(r,\omega)\in\lbrack t,T]\times\Omega
}|f(r,\omega)|<\infty; $ $L_{\mathcal{F}}^{p,q}(t,T;\mathbb{R}^{n})$: the
space of $\mathbb{F}$-adapted $\mathbb{R}^{n}$-valued stochastic processes on
$[t,T]$ such that \ $||f(\cdot)||_{p,q}=\{\mathbb{E}[(\int_{t}^{T}%
|f(r)|^{p}dr)^{\frac{q}{p}}]\}^{\frac{1}{q}}<\infty; $ $L_{\mathcal{F}}%
^{p}(\Omega;C([t,T];\mathbb{R}^{n}))$: the space of $\mathbb{F}$-adapted
$\mathbb{R}^{n}$-valued continuous stochastic processes on $[t,T]$ such that
\ $\mathbb{E}[\sup_{t\leq r\leq T}|f(r)|^{p}]<\infty. $

Let $t\in\lbrack0,T]$, $\xi\in L^{2}(\mathcal{F}_{t};\mathbb{R}^{n})$ and an
admissible control\ $u(\cdot)\in\mathcal{U}[t,T]$. Consider the following
controlled fully coupled FBSDE:
\begin{equation}
\left\{
\begin{array}
[c]{rl}%
dX_{s}^{t,\xi;u}= & b(s,X_{s}^{t,\xi;u},Y_{s}^{t,\xi;u},Z_{s}^{t,\xi;u}%
,u_{s})ds+\sigma(s,X_{s}^{t,\xi;u},Y_{s}^{t,\xi;u},Z_{s}^{t,\xi;u}%
,u_{s})dB_{s},\\
dY_{s}^{t,\xi;u}= & -g(s,X_{s}^{t,\xi;u},Y_{s}^{t,\xi;u},Z_{s}^{t,\xi;u}%
,u_{s})ds+Z_{s}^{t,\xi;u}dB_{s},\;s\in\lbrack t,T],\\
X_{t}^{t,\xi;u}= & \xi,\ Y_{T}^{t,\xi;u}=\phi(X_{T}^{t,\xi;u}).
\end{array}
\right.  \label{state-eq}%
\end{equation}

Under the Assumption \ref{assum-1}, by Theorem 2.2 in \cite{Hu-JX}, the
equation \eqref{state-eq} has a unique solution $(X^{t,\xi;u},Y^{t,\xi;u},
\newline Z^{t,\xi;u})\in L_{\mathcal{F}}^{2}(\Omega;C([t,T];\mathbb{R}%
^{n}))\times L_{\mathcal{F}}^{2}(\Omega;C([t,T];\mathbb{R}))\times
L_{\mathcal{F}}^{2,2}(t,T;\mathbb{R}^{1\times d})$. For each given
$(t,x)\in\lbrack0,$ $T]\times\mathbb{R}^{n}$, define the value function
\begin{equation}
W(t,x)=\underset{u\in\mathcal{U}[t,T]}{ess\inf}Y_{t}^{t,x;u}. \label{obje-eq}%
\end{equation}

\section{The existence of viscosity solutions}

{In order to prove the existence of the viscosity solution, we need to study
the} above fully coupled stochastic optimal control problem. It is well-known
that DPP is an important approach to solving stochastic optimal control
problems (see \cite{Yong-Zhou,Zhou-1990-2,Zhou-1991}). So in the first
subsection, {the DPP for the stochastic control problem (\ref{state-eq}%
)-(\ref{obje-eq}) is established. Then we prove the value function is a
viscosity solution to the HJB equation (\ref{pre-hjb}) in the second
subsection.}

%\subsection{The dynamic programming principle}
%{In order to prove the existence of the viscosity solution, we need to study
%the} above fully coupled stochastic optimal control problem. It is well-known
%that DPP is an important approach to solving stochastic optimal control
%problems (see \cite{Yong-Zhou,Zhou-1990-2,Zhou-1991}). So in the first
%subsection, {the DPP for the stochastic control problem (\ref{state-eq}%
%)-(\ref{obje-eq}) is established. Then we prove the value function is a
%viscosity solution to the HJB equation (\ref{pre-hjb}) in the second
%subsection.}

\subsection{The dynamic programming principle}

Define $\mathcal{U}^{t}[t,T]$ the space of all $U$-valued $\{\mathcal{F}%
_{s}^{t}\}_{t\leq s\leq T}$-adapted processes on $[t,T]$, where $\{\mathcal{F}%
_{s}^{t}\}_{t\leq s\leq T}$ is the $P$-augmenta-tion of the natural filtration
of $(B_{s}-B_{t})_{t\leq s\leq T}$. For each $v\in\mathcal{U}^{t}[t,T]$, it is
easy to verify that the solution $(X_{s}^{t,x;v},Y_{s}^{t,x;v},Z_{s}%
^{t,x;v})_{s\in\lbrack t,T]}$ to equation \eqref{state-eq} is $\{\mathcal{F}%
_{s}^{t}\}_{t\leq s\leq T}$-adapted, which implies that $Y_{t}^{t,x;v}%
\in\mathbb{R}$.

It should note that in this paper, the constant $C$ will change from line to
line in the following proofs.

\begin{proposition}
\label{pro-new-1}Suppose Assumption \ref{assum-1} holds. Then
\begin{equation}
W(t,x)=\underset{v\in\mathcal{U}^{t}[t,T]}{\inf}Y_{t}^{t,x;v}.
\label{eq-deter}%
\end{equation}

\end{proposition}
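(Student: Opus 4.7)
The plan is to prove both inequalities separately, exploiting that any $v\in\mathcal{U}^t[t,T]$ yields a deterministic $Y^{t,x;v}_t$ because $\mathcal{F}^t_t$ is the $P$-trivial $\sigma$-algebra, while the full value $W(t,x)$ is a priori only $\mathcal{F}_t$-measurable.

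The easy direction is $W(t,x)\le \inf_{v\in\mathcal{U}^t[t,T]}Y^{t,x;v}_t$, which follows from the inclusion $\mathcal{U}^t[t,T]\subset\mathcal{U}[t,T]$: every $\{\mathcal{F}^t_s\}$-adapted control is $\mathbb{F}$-adapted, and the essinf over a larger family cannot exceed any fixed $Y^{t,x;v}_t$, hence cannot exceed their infimum over $v$.

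The substantive direction is the reverse inequality. I would fix an arbitrary $u\in\mathcal{U}[t,T]$ and reduce to $\inf_{v\in\mathcal{U}^t[t,T]} Y^{t,x;v}_t \le Y^{t,x;u}_t$ $P$-a.s. To do this, write $\Omega=\Omega_1\times\Omega_2$, where $\Omega_1$ carries $(B_r)_{r\le t}$ and $\Omega_2$ carries $(B_s-B_t)_{s\in[t,T]}$. For each fixed $\omega_1\in\Omega_1$, the frozen control $v^{\omega_1}(\cdot):=u(\omega_1,\cdot)$ is $\{\mathcal{F}^t_s\}$-adapted and $U$-valued, hence lies in $\mathcal{U}^t[t,T]$. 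By the uniqueness of solutions to the FBSDE \eqref{state-eq} (Theorem 2.2 of \cite{Hu-JX} under Assumption \ref{assum-1}) together with a Fubini/regular-conditional-probability argument on the product structure $\Omega_1\times\Omega_2$, the slice $\omega_2\mapsto (X^{t,x;u},Y^{t,x;u},Z^{t,x;u})(\omega_1,\omega_2)$ satisfies, for $P_{\omega_1}$-a.e.\ $\omega_1$, the same FBSDE driven by $(B_s-B_t)_{s\in[t,T]}$ with control $v^{\omega_1}$ and initial data $x$; by uniqueness this slice coincides with $(X^{t,x;v^{\omega_1}},Y^{t,x;v^{\omega_1}},Z^{t,x;v^{\omega_1}})$. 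Evaluating at time $t$ yields $Y^{t,x;u}_t(\omega_1)=Y^{t,x;v^{\omega_1}}_t\ge \inf_{v\in\mathcal{U}^t[t,T]}Y^{t,x;v}_t$ for $P$-a.e.\ $\omega_1$. Taking the essential infimum over $u\in\mathcal{U}[t,T]$ gives $W(t,x)\ge\inf_{v\in\mathcal{U}^t[t,T]}Y^{t,x;v}_t$ $P$-a.s. Combined with the first inequality, both sides are equal, and in particular $W(t,x)$ is a deterministic constant.

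The main obstacle is the measurable/Fubini step: one must justify that freezing $\omega_1$ produces a bona fide element of $\mathcal{U}^t[t,T]$ for $P_{\omega_1}$-a.e.\ $\omega_1$ and that the conditional FBSDE on the slice identifies with the one driven only by the post-$t$ increments. This rests on the pathwise-unique solvability guaranteed by Assumption \ref{assum-1}; once uniqueness is available, regular conditional probabilities and the independence of $(B_s-B_t)_{s\ge t}$ from $\mathcal{F}_t$ allow the identification $Y^{t,x;u}_t(\omega_1,\cdot)=Y^{t,x;v^{\omega_1}}_t$. After that, the remainder of the argument is bookkeeping on the essential infimum.
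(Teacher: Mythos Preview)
Your proof takes a genuinely different route from the paper's. The paper does not freeze $\omega_1$ or invoke regular conditional probabilities. Instead, for an arbitrary $u\in\mathcal{U}[t,T]$ it invokes an approximation lemma (Lemma~13 in \cite{Hu-J}) to produce controls $u^m_s=\sum_{i=1}^m v^{i,m}_s I_{A_i}$ with $v^{i,m}\in\mathcal{U}^t[t,T]$, $\{A_i\}$ an $\mathcal{F}_t$-partition, and $u^m\to u$ in $L^2$. The stability estimate for the fully coupled FBSDE (Theorem~2.2 in \cite{Hu-JX}) together with dominated convergence gives $Y_t^{t,x;u^m}\to Y_t^{t,x;u}$ in $L^2$; on the other hand, the solution for the simple control decomposes as $Y_t^{t,x;u^m}=\sum_i Y_t^{t,x;v^{i,m}}I_{A_i}\ge \inf_{v\in\mathcal{U}^t[t,T]}Y_t^{t,x;v}$, and passing to the limit yields the reverse inequality.

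Your conditioning argument is a reasonable alternative and is standard for decoupled forward--backward systems, but the step you flag as ``the main obstacle'' is a real gap here, not just bookkeeping. The paper does not assume a product structure $\Omega=\Omega_1\times\Omega_2$, so you first need to reduce to canonical Wiener space. More importantly, for a \emph{fully coupled} FBSDE you must show that for $P_1$-a.e.\ $\omega_1$ the frozen triple $(X^{t,x;u},Y^{t,x;u},Z^{t,x;u})(\omega_1,\cdot)$ actually satisfies the FBSDE on $(\Omega_2,\{\mathcal{F}^t_s\},P_2)$ with control $v^{\omega_1}$; this requires checking that the $dB$-integrals and the martingale representation in the backward component commute with the freezing, which is not automatic and typically needs a Picard-iteration or monotone-class argument tailored to the coupled system. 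The paper's approximation route avoids this entirely: it only uses the $L^2$-stability of the FBSDE in the control and the elementary identity $(X,Y,Z)^{t,x;\sum_i v^i I_{A_i}}=\sum_i (X,Y,Z)^{t,x;v^i}I_{A_i}$, both of which are already available. If you want your approach to be complete you should either supply that conditioning lemma for the coupled FBSDE or switch to the paper's approximation strategy.
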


\begin{proof}
Since $\mathcal{U}^{t}[t,T]\subset\mathcal{U}[t,T]$, we obtain $W(t,x)\leq
\inf_{v\in\mathcal{U}^{t}[t,T]}Y_{t}^{t,x;v}$ by the definition of $W(t,x)$.
On the other hand, for each given $u\in\mathcal{U}[t,T]$, by Lemma 13 in
\cite{Hu-J}, there exists a sequence $(u^{m})$ in $\mathcal{U}[t,T]$ such that
$\mathbb{E}\left[  \int_{t}^{T}\left\vert u_{s}^{m}-u_{s}\right\vert
^{2}ds\right]  \rightarrow0$ as $m\rightarrow\infty$. Moreover, we can take
$u_{s}^{_{m}}=\sum_{i=1}^{m}v_{s}^{i,m}I_{A_{i}}$, $s\in\lbrack t,T]$, where
$\left\{  A_{i}\right\}  _{i=1}^{m}$ is a partition of $\left(  \Omega
,\mathcal{F}_{t}\right)  $ and $v^{i,m}\in\mathcal{U}^{t}[t,T]$. By Theorem
2.2 in \cite{Hu-JX}, we get%
\begin{equation}%
\begin{array}
[c]{l}%
\mathbb{E}\left[  \left\vert Y_{t}^{t,x;u^{m}}-Y_{t}^{t,x;u}\right\vert
^{2}\right] \\
\leq C\mathbb{E}\left[  \int_{t}^{T}|b(s,X_{s}^{t,x;u},Y_{s}^{t,x;u}%
,Z_{s}^{t,x;u},u_{s}^{m})-b(s,X_{s}^{t,x;u},Y_{s}^{t,x;u},Z_{s}^{t,x;u}%
,u_{s})|^{2}ds\right] \\
\text{ \ \ }+C\mathbb{E}\left[  \int_{t}^{T}\left\vert g(s,X_{s}^{t,x;u}%
,Y_{s}^{t,x;u},Z_{s}^{t,x;u},u_{s}^{m})-g(s,X_{s}^{t,x;u},Y_{s}^{t,x;u}%
,Z_{s}^{t,x;u},u_{s})\right\vert ^{2}ds\right] \\
\text{ \ \ }+C\mathbb{E}\left[  \int_{t}^{T}|\sigma\left(  s,X_{s}%
^{t,x;u},Y_{s}^{t,x;u},Z_{s}^{t,x;u},u_{s}^{m}\right)  -\sigma(s,X_{s}%
^{t,x;u},Y_{s}^{t,x;u},Z_{s}^{t,x;u},u_{s})|^{2}ds\right]  .
\end{array}
\label{eq-diff-contr}%
\end{equation}
By Remark \ref{linear growth},%
\[
|\psi(s,X_{s}^{t,x;u},Y_{s}^{t,x;u},Z_{s}^{t,x;u},u)|\leq L(1+|X_{s}%
^{t,x;u}|+|Y_{s}^{t,x;u}|+|Z_{s}^{t,x;u}|),
\]
where $\psi=b,$ $\sigma,$ $g$ and $s\in\lbrack t,T]$, $u\in U$. Applying the
dominated convergence theorem to (\ref{eq-diff-contr}), we obtain%
\begin{equation}
\mathbb{E}\left[  \left\vert Y_{t}^{t,x;u^{m}}-Y_{t}^{t,x;u}\right\vert
^{2}\right]  \rightarrow0\text{ as }m\rightarrow\infty. \label{eq-deff-1}%
\end{equation}
Note that%
\[%
\begin{array}
[c]{l}%
\left(  X_{s}^{t,x;u^{m}},Y_{s}^{t,x;u^{m}},Z_{s}^{t,x;u^{m}}\right)
_{s\in\lbrack t,T]}\\
=\left(  \sum_{i=1}^{m}X_{s}^{t,x;v^{i,m}}I_{A_{i}},\sum_{i=1}^{m}%
Y_{s}^{t,x;v^{i,m}}I_{A_{i}},\sum_{i=1}^{m}Z_{s}^{t,x;v^{i,m}}I_{A_{i}%
}\right)  _{s\in\lbrack t,T]}.
\end{array}
\]
Then,%
\begin{equation}
Y_{t}^{t,x;u^{m}}=\sum_{i=1}^{m}Y_{t}^{t,x;v^{i,m}}I_{A_{i}}\geq
\underset{v\in\mathcal{U}^{t}[t,T]}{\inf}Y_{t}^{t,x;v}\text{, }P\text{-}a.s..
\label{eq-deff-2}%
\end{equation}
Combining (\ref{eq-deff-1}) and (\ref{eq-deff-2}), we obtain%
\[
Y_{t}^{t,x;u}\geq\underset{v\in\mathcal{U}^{t}[t,T]}{\inf}Y_{t}^{t,x;v}\text{,
}P\text{-}a.s.,
\]
which yields that $W(t,x)\geq\inf_{v\in\mathcal{U}^{t}[t,T]}Y_{t}^{t,x;v}$.
Thus $W(t,x)=\inf_{v\in\mathcal{U}^{t}[t,T]}Y_{t}^{t,x;v}$.
\end{proof}

The above Proposition shows that $W(t,x)$ is a deterministic function. To
prove $W(t,\xi)=ess\inf_{u\in\mathcal{U}[t,T]}Y_{t}^{t,\xi;u}$, we need the
following two lemmas.

\begin{lemma}
\label{est-initial}Suppose Assumption \ref{assum-1} holds. Then there exists a
constant $C$ depending on $L_{1}$, $L_{2}$, $L_{3}$ and $T$ such that, for
each $u\in\mathcal{U}[t,T]$ and $\xi,\xi^{\prime}\in L^{2}(\mathcal{F}%
_{t};\mathbb{R}^{n})$,%
\begin{equation}%
\begin{array}
[c]{rl}
& \mathbb{E}\left[  \left.  \sup\limits_{t\leq s\leq T}\left(  |X_{s}%
^{t,\xi;u}-X_{s}^{t,\xi^{\prime};u}|^{2}+|Y_{s}^{t,\xi;u}-Y_{s}^{t,\xi
^{\prime};u}|^{2}\right)  +\int_{t}^{T}|Z_{s}^{t,\xi;u}-Z_{s}^{t,\xi^{\prime
};u}|^{2}ds\right\vert \mathcal{F}_{t}\right] \\
& \ \ \leq C\left\vert \xi-\xi^{\prime}\right\vert ^{2};
\end{array}
\label{est-initial-1}%
\end{equation}%
\begin{equation}
\mathbb{E}\left[  \left.  \sup\limits_{t\leq s\leq T}\left(  |X_{s}^{t,\xi
;u}|^{2}+|Y_{s}^{t,\xi;u}|^{2}\right)  +\int_{t}^{T}|Z_{s}^{t,\xi;u}%
|^{2}ds\right\vert \mathcal{F}_{t}\right]  \leq C\left(  1+\left\vert
\xi\right\vert ^{2}\right)  . \label{est-initial-2}%
\end{equation}

\end{lemma}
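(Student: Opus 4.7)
The plan is to derive the conditional a priori estimate for the fully coupled FBSDE (\ref{state-eq}) by imitating, in conditional form, the fixed-point/a-priori argument underlying Theorem 2.2 of \cite{Hu-JX} and the standard FBSDE lemma referenced as Lemma \ref{sde-bsde}. The entire input to the estimate ($\xi$, $\xi'$, $u$) is $\mathbb{F}$-adapted but the Brownian increments $(B_s - B_t)_{s\geq t}$ produce stochastic integrals on $[t,T]$ whose $\mathcal{F}_t$-conditional expectation vanishes, so all the pieces of the classical unconditional proof have conditional analogues.

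First I would introduce the differences
$\Delta X_s := X_s^{t,\xi;u} - X_s^{t,\xi';u}$, $\Delta Y_s := Y_s^{t,\xi;u} - Y_s^{t,\xi';u}$, $\Delta Z_s := Z_s^{t,\xi;u} - Z_s^{t,\xi';u}$, together with $\Delta b_s, \Delta\sigma_s, \Delta g_s$ defined as the increments of the coefficients when only the state triple is varied (the control $u_s$ is common). Then $(\Delta X,\Delta Y,\Delta Z)$ solves a fully coupled linear-type FBSDE with initial data $\xi-\xi'$ and terminal data $\phi(X_T^{t,\xi;u})-\phi(X_T^{t,\xi';u})$. Applying It\^o to $|\Delta Y_s|^2$ on $[s,T]$, taking $\mathbb{E}[\,\cdot\,|\mathcal{F}_t]$ (the dB-integrals are conditionally centered by a standard localisation/BDG argument), and using the Lipschitz bounds from Assumption \ref{assum-1}(i) together with Young's inequality, I obtain
\[
\mathbb{E}\Big[\sup_{s\in[t,T]}|\Delta Y_s|^2 + \int_t^T |\Delta Z_s|^2\,ds\,\Big|\,\mathcal{F}_t\Big]
 \le C\,\mathbb{E}\Big[|\Delta X_T|^2 + \int_t^T |\Delta X_s|^2 ds\,\Big|\,\mathcal{F}_t\Big]
 + c_1^2 C'\,\mathbb{E}\Big[\sup_s|\Delta Y_s|^2+\int_t^T|\Delta Z_s|^2\,ds\,\Big|\,\mathcal{F}_t\Big],
\]
with $c_1=\max\{L_2,L_3\}$. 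Similarly, It\^o on $|\Delta X_s|^2$ combined with BDG, conditional expectation, and Gronwall yields
\[
\mathbb{E}\Big[\sup_{s\in[t,T]}|\Delta X_s|^2\,\Big|\,\mathcal{F}_t\Big]
 \le C\Big(|\xi-\xi'|^2 + c_1^2\,\mathbb{E}\Big[\sup_s|\Delta Y_s|^2+\int_t^T|\Delta Z_s|^2\,ds\,\Big|\,\mathcal{F}_t\Big]\Big).
\]

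The main obstacle, and the point where the smallness hypothesis Assumption \ref{assum-1}(ii) enters, is closing these two coupled conditional inequalities: the forward estimate carries an $(X\!\to\!Y,Z)$ feedback through $\sigma$ and $b$ while the backward estimate feeds back through the terminal condition and $g$. Substituting one bound into the other produces terms of order $c_1^2 \cdot 8 C_2(L)(1+T^2)$, and the assumption $\bar\Lambda<1$ is exactly what is needed to absorb those terms into the left-hand side. This yields (\ref{est-initial-1}). For (\ref{est-initial-2}) I run the same argument starting from $(X^{t,\xi;u},Y^{t,\xi;u},Z^{t,\xi;u})$ itself, replacing the Lipschitz increments by the linear-growth bound of Remark \ref{linear growth}, so the analogous conditional computation produces the bound $C(1+|\xi|^2)$. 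Throughout, the constants depend only on $L_1,L_2,L_3,T$ because the absorbing factor $(1-\bar\Lambda)^{-1}$ is universal under Assumption \ref{assum-1}.
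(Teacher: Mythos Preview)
Your approach is correct in outline, but it differs from the paper's argument in one notable way. The paper does not redo the coupled It\^o/BDG/Gronwall computation by hand: instead it \emph{linearizes} the difference process, writing $(\hat X,\hat Y,\hat Z)=(\Delta X,\Delta Y,\Delta Z)$ as the solution of a fully coupled FBSDE with coefficients $b^i(s)$, $\sigma^i(s)$, $g^i(s)$, $\phi^1(T)$ obtained as difference quotients of $b,\sigma,g,\phi$. These random linearized coefficients are bounded by the same Lipschitz constants $L_1,L_2,L_3$ as in Assumption~\ref{assum-1}, so the linearized FBSDE falls directly under Theorem~2.2 of \cite{Hu-JX}, which yields (\ref{est-initial-1}) in one step (and similarly (\ref{est-initial-2}) by estimating against the zero solution). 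What this buys the paper is brevity and a cleaner bookkeeping of constants: the whole absorption step you describe---closing the forward and backward inequalities against each other under $\bar\Lambda<1$---is exactly the content of Theorem~2.2 in \cite{Hu-JX}, so citing it avoids reproducing that machinery. Your approach is more self-contained but essentially replicates that proof in conditional form; note also that in your first displayed inequality the extra $c_1^2 C'$ term on the right should not appear, since the $(Y,Z)$-Lipschitz dependence of $g$ (with constant $L_1$) is absorbed by the standard BSDE estimate without any smallness, and the $c_1$-coupling enters only through the forward equation.
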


\begin{proof}
Without loss of generality, we only prove the case $d=1$. Set $\hat
{X}=X^{t,\xi;u}-X^{t,\xi^{\prime};u}$, $\hat{Y}=Y^{t,\xi;u}-Y^{t,\xi^{\prime
};u}$, $\hat{Z}=Z^{t,\xi;u}-Z^{t,\xi^{\prime};u}$. Then $\left(  \hat{X}%
,\hat{Y},\hat{Z}\right)  $ satisfies the following FBSDE,%
\[
\left\{
\begin{array}
[c]{rl}%
d\hat{X}_{s}= & \left[  b^{1}(s)\hat{X}_{s}+b^{2}(s)\hat{Y}_{s}+b^{3}%
(s)\hat{Z}_{s}\right]  ds\\
& +\left[  \sigma^{1}(s)\hat{X}_{s}+\sigma^{2}(s)\hat{Y}_{s}+\sigma^{3}%
(s)\hat{Z}_{s}\right]  dB_{s},\\
d\hat{Y}_{s}= & -\left[  g^{1}(s)\hat{X}_{s}+g^{2}(s)\hat{Y}_{s}+g^{3}%
(s)\hat{Z}_{s}\right]  ds+\hat{Z}_{s}dB_{s},\text{ }s\in\lbrack t,T],\\
\hat{X}_{t}= & \xi-\xi^{\prime},\ \hat{Y}_{T}=\phi^{1}(T)\hat{X}_{T},
\end{array}
\right.
\]
where
\[
b^{1}\left(  s\right)  =\left\{
\begin{array}
[c]{cc}%
\frac{b(s,X_{s}^{t,\xi;u},Y_{s}^{t,\xi;u},Z_{s}^{t,\xi;u},u_{s})-b(s,X_{s}%
^{t,\xi^{\prime};u},,Y_{s}^{t,\xi;u},Z_{s}^{t,\xi;u},u_{s})}{X_{s}^{t,\xi
;u}-X_{s}^{t,\xi^{\prime};u}}\text{,} & \hat{X}_{s}\neq0,\\
0\text{,} & \hat{X}_{s}=0,
\end{array}
\right.
\]
and $b^{i}$, $\sigma^{i}$, $g^{i}$, $\phi^{1}$ are defined similarly,
$i=1,2,3$. By Assumption \ref{assum-1}, $b^{i}$, $\sigma^{i}$, $g^{i}$ and
$\phi^{1}$ are bounded for $i=1,2,3$. Then (\ref{est-initial-1}) holds by
Theorem 2.2 in \cite{Hu-JX}.

Since $|\psi(s,0,0,0,u)|\leq L$ for $s\in\lbrack t,T]$ and $u\in U$ where
$\psi=b,$ $\sigma,$ $g$, by Theorem 2.2 in \cite{Hu-JX}, we obtain
\[%
\begin{array}
[c]{l}%
\mathbb{E}\left[  \left.  \sup\limits_{t\leq s\leq T}\left(  |X_{s}^{t,\xi
;u}|^{2}+|Y_{s}^{t,\xi;u}|^{2}\right)  +\int_{t}^{T}|Z_{s}^{t,\xi;u}%
|^{2}ds\right\vert \mathcal{F}_{t}\right] \\
\text{ \ }\leq C\mathbb{E}\left[  \left.  \left\vert \xi\right\vert
^{2}+\left(  \int_{t}^{T}\left(  |b|+|g|\right)  \left(  s,0,0,0,u_{s}\right)
ds\right)  ^{2}+\int_{t}^{T}|\sigma\left(  s,0,0,0,u_{s}\right)
|^{2}ds\right\vert \mathcal{F}_{t}\right] \\
\text{ \ }\leq C\left(  1+\left\vert \xi\right\vert ^{2}\right)  .
\end{array}
\]
This completes the proof.
\end{proof}

\begin{lemma}
\label{le-w-new}Suppose Assumption \ref{assum-1} holds. Then there exist two
constants $C$ and $C^{\prime}$ depending on $L_{1}$, $L_{2}$, $L_{3}$ and $T$
such that, for each $t\in\lbrack0,T]$ and $x,x^{\prime}\in\mathbb{R}^{n}$,%
\[
|W(t,x)-W(t,x^{\prime})|\leq C|x-x^{\prime}|\text{ and }|W(t,x)|\leq
C^{\prime}(1+|x|),
\]
where $C=\sqrt{C_{2}}\left(  1-\sqrt{\Lambda}\right)  ^{-1}$.
\end{lemma}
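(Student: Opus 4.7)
The plan is to combine the deterministic representation $W(t,x)=\inf_{v\in\mathcal{U}^{t}[t,T]}Y_{t}^{t,x;v}$ from Proposition \ref{pro-new-1} with the uniform $L^{2}$ estimates of Lemma \ref{est-initial}. For $v\in\mathcal{U}^{t}[t,T]$ the solution $Y_{t}^{t,x;v}$ is $\mathcal{F}_{t}^{t}$-measurable and hence a (deterministic) real number. With the deterministic initial datum $\xi=x$, the conditioning on $\mathcal{F}_{t}$ in (\ref{est-initial-1})--(\ref{est-initial-2}) collapses to an unconditional expectation, yielding
\[
|Y_{t}^{t,x;v}-Y_{t}^{t,x^{\prime};v}|\leq \sqrt{C}\,|x-x^{\prime}|,\qquad |Y_{t}^{t,x;v}|\leq \sqrt{C}\,(1+|x|),
\]
with a constant $C$ independent of $v$. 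The key point is that Assumption \ref{assum-1}(i) is uniform in $u\in U$, so the constants $C_{2}$ and $\Lambda$ entering Assumption \ref{assum-1}(ii) do not depend on the choice of $v$.

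For the Lipschitz bound I would run the standard $\varepsilon$-optimization argument. Given $\varepsilon>0$, pick $v^{\varepsilon}\in\mathcal{U}^{t}[t,T]$ with $Y_{t}^{t,x^{\prime};v^{\varepsilon}}\leq W(t,x^{\prime})+\varepsilon$. Then
\[
W(t,x)-W(t,x^{\prime})\leq Y_{t}^{t,x;v^{\varepsilon}}-Y_{t}^{t,x^{\prime};v^{\varepsilon}}+\varepsilon\leq \sqrt{C}\,|x-x^{\prime}|+\varepsilon,
\]
and swapping the roles of $x$ and $x^{\prime}$, then sending $\varepsilon\downarrow 0$, gives $|W(t,x)-W(t,x^{\prime})|\leq \sqrt{C}\,|x-x^{\prime}|$. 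The linear growth follows even more directly: the uniform bound on $|Y_{t}^{t,x;v}|$ yields $-\sqrt{C}\,(1+|x|)\leq W(t,x)\leq \sqrt{C}\,(1+|x|)$ after taking the infimum over $v$, so $|W(t,x)|\leq C^{\prime}(1+|x|)$ with $C^{\prime}=\sqrt{C}$.

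The explicit form $C=\sqrt{C_{2}}(1-\sqrt{\Lambda})^{-1}$ stated in the lemma is obtained by reading off the constant in Theorem 2.2 of \cite{Hu-JX}, which underlies Lemma \ref{est-initial}, and taking the square root. I do not expect any substantive obstacle here; the argument is a routine transfer of uniform-in-control FBSDE estimates to the value function through an $\varepsilon$-optimizer, and the only care needed is to verify uniformity in $v\in\mathcal{U}^{t}[t,T]$, which is immediate from the hypotheses.
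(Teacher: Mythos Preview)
Your proposal is correct and follows essentially the same approach as the paper: both combine the deterministic representation of Proposition \ref{pro-new-1} with the uniform-in-control estimates of Lemma \ref{est-initial}, then read off the explicit constant from Theorem 2.2 in \cite{Hu-JX}. The only cosmetic difference is that the paper passes the bound through the infimum via the elementary inequality $|\inf_{v}Y_{t}^{t,x;v}-\inf_{v}Y_{t}^{t,x^{\prime};v}|\leq\sup_{v}|Y_{t}^{t,x;v}-Y_{t}^{t,x^{\prime};v}|$ directly, whereas you spell this out with an $\varepsilon$-optimizer; these are of course equivalent.
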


\begin{proof}
By Proposition \ref{pro-new-1} and Lemma \ref{est-initial}, we obtain
\[%
\begin{array}
[c]{rl}%
|W(t,x)-W(t,x^{\prime})| & =\left\vert \underset{v\in\mathcal{U}%
^{t}[t,T]}{\inf}Y_{t}^{t,x;v}-\underset{v\in\mathcal{U}^{t}[t,T]}{\inf}%
Y_{t}^{t,x^{\prime};v}\right\vert \\
& \leq\underset{v\in\mathcal{U}^{t}[t,T]}{\sup}\left\vert Y_{t}^{t,x;v}%
-Y_{t}^{t,x^{\prime};v}\right\vert \\
& \leq\underset{v\in\mathcal{U}^{t}[t,T]}{\sup}\left\{  \mathbb{E}\left[
\left.  \sup\limits_{t\leq s\leq T}\left\vert Y_{s}^{t,x;v}-Y_{s}%
^{t,x^{\prime};v}\right\vert ^{2}\right\vert \mathcal{F}_{t}\right]  \right\}
^{\frac{1}{2}}\\
& \leq C|x-x^{\prime}|.
\end{array}
\]
By the proof of Theorem 2.2 in \cite{Hu-JX}, we can obtain $C=\sqrt{C_{2}%
}\left(  1-\sqrt{\Lambda}\right)  ^{-1}$. The second inequality can be proved similarly.
\end{proof}

\begin{proposition}
\label{pro-w-random}Suppose Assumption \ref{assum-1} holds. Then, for each
$\xi\in$$L^{2}(\mathcal{F}_{t};\mathbb{R}^{n})$, we have \ $W(t,\xi
)=ess\inf_{u\in\mathcal{U}[t,T]}Y_{t}^{t,\xi;u}. $
\end{proposition}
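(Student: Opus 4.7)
The plan is to establish the identity first for simple $\mathcal{F}_t$-measurable initial conditions and then extend it to general $\xi\in L^{2}(\mathcal{F}_t;\mathbb{R}^{n})$ by $L^{2}$-approximation, relying on the stability estimate of Lemma~\ref{est-initial} and the Lipschitz continuity of $W(t,\cdot)$ from Lemma~\ref{le-w-new}. A preliminary observation I will use throughout is that the family $\{Y_{t}^{t,\xi;u}:u\in\mathcal{U}[t,T]\}$ is directed downward: given $u^{1},u^{2}\in\mathcal{U}[t,T]$, the set $A:=\{Y_{t}^{t,\xi;u^{1}}\leq Y_{t}^{t,\xi;u^{2}}\}$ lies in $\mathcal{F}_{t}$ (each $Y_{t}^{t,\xi;u^{i}}$ is the initial value of a BSDE and hence $\mathcal{F}_{t}$-measurable), so the pasted control $u^{1}I_{A}+u^{2}I_{A^{c}}$ is admissible and, by FBSDE uniqueness, realizes the pointwise minimum of the two $Y$-values. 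Consequently $\mathop{\mathrm{ess\,inf}}_{u}Y_{t}^{t,\xi;u}$ is the $P$-a.s.\ limit of a decreasing sequence drawn from the admissible family.

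For a simple datum $\xi=\sum_{i=1}^{N}x_{i}I_{A_{i}}$ with $\{A_{i}\}\subset\mathcal{F}_{t}$ a partition of $\Omega$ and $x_{i}\in\mathbb{R}^{n}$, uniqueness of the solution to \eqref{state-eq} (Theorem 2.2 of \cite{Hu-JX}) gives the splitting
\[
(X_{s}^{t,\xi;u},Y_{s}^{t,\xi;u},Z_{s}^{t,\xi;u})=\sum_{i=1}^{N}(X_{s}^{t,x_{i};u},Y_{s}^{t,x_{i};u},Z_{s}^{t,x_{i};u})I_{A_{i}},\quad s\in[t,T],
\]
for every $u\in\mathcal{U}[t,T]$. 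Using $W(t,x_{i})\leq Y_{t}^{t,x_{i};u}$ $P$-a.s.\ (by definition of $W$), this yields $Y_{t}^{t,\xi;u}\geq\sum_{i}W(t,x_{i})I_{A_{i}}=W(t,\xi)$, whence $\mathop{\mathrm{ess\,inf}}_{u}Y_{t}^{t,\xi;u}\geq W(t,\xi)$. For the reverse inequality I pick, for each $i$, a decreasing minimizing sequence $\{u^{i,n}\}_{n}\subset\mathcal{U}[t,T]$ with $Y_{t}^{t,x_{i};u^{i,n}}\searrow W(t,x_{i})$ (which exists by the directed-downward property at the deterministic initial condition $x_{i}$), set $u^{n}:=\sum_{i}u^{i,n}I_{A_{i}}\in\mathcal{U}[t,T]$, and read off $Y_{t}^{t,\xi;u^{n}}=\sum_{i}Y_{t}^{t,x_{i};u^{i,n}}I_{A_{i}}\searrow W(t,\xi)$.

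For general $\xi$ I approximate it in $L^{2}(\mathcal{F}_{t};\mathbb{R}^{n})$ by simple $\xi^{k}\to\xi$. Since $Y_{t}^{t,\xi;u}$ is $\mathcal{F}_{t}$-measurable, the conditional estimate \eqref{est-initial-1} of Lemma~\ref{est-initial} sharpens to the a.s.\ bound $|Y_{t}^{t,\xi;u}-Y_{t}^{t,\xi^{k};u}|\leq\sqrt{C}\,|\xi-\xi^{k}|$ uniformly in $u\in\mathcal{U}[t,T]$. The elementary lattice inequality $|\mathop{\mathrm{ess\,inf}}_{u}f_{u}-\mathop{\mathrm{ess\,inf}}_{u}g_{u}|\leq\mathop{\mathrm{ess\,sup}}_{u}|f_{u}-g_{u}|$ then gives $|\mathop{\mathrm{ess\,inf}}_{u}Y_{t}^{t,\xi;u}-\mathop{\mathrm{ess\,inf}}_{u}Y_{t}^{t,\xi^{k};u}|\leq\sqrt{C}\,|\xi-\xi^{k}|$ $P$-a.s. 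Combined with the Lipschitz bound $|W(t,\xi)-W(t,\xi^{k})|\leq C|\xi-\xi^{k}|$ of Lemma~\ref{le-w-new} and the Step~2 identity $W(t,\xi^{k})=\mathop{\mathrm{ess\,inf}}_{u}Y_{t}^{t,\xi^{k};u}$, sending $k\to\infty$ in $L^{2}(\mathcal{F}_{t})$ closes the argument.

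The main obstacle is the careful handling of the essential infimum. First, the existence of the decreasing minimizing sequence requires that the pasted control remain admissible, which hinges on the switching set belonging to $\mathcal{F}_{t}$ and on FBSDE uniqueness transferring the pasting from controls to solutions. Second, commuting $\mathop{\mathrm{ess\,inf}}$ with an $L^{2}$-perturbation of $\xi$ is delicate: it works here precisely because Lemma~\ref{est-initial} supplies a conditional-expectation bound against $\mathcal{F}_{t}$ (and not merely an unconditional $L^{2}$-estimate), which yields the pointwise-in-$\omega$ uniformity in $u$ needed to transfer the stability from individual $Y$'s to their essential infima.
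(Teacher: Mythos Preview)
Your proof is correct and follows essentially the same approach as the paper: reduce to simple $\mathcal{F}_t$-measurable initial data via the splitting property of the FBSDE solution, establish the identity there, and extend to general $\xi$ using the pointwise (conditional) stability estimate of Lemma~\ref{est-initial} together with the Lipschitz continuity of $W(t,\cdot)$ from Lemma~\ref{le-w-new}. Your treatment is in fact more explicit than the paper's about the pasting step: the paper asserts the commutation $\mathop{\mathrm{ess\,inf}}_{u}\sum_{i}Y_{t}^{t,x_{i}^{m};u}I_{A_{i}^{m}}=\sum_{i}\bigl(\mathop{\mathrm{ess\,inf}}_{u}Y_{t}^{t,x_{i}^{m};u}\bigr)I_{A_{i}^{m}}$ without comment, whereas you justify the non-trivial direction via the directed-downward property and the construction of the pasted control.
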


\begin{proof}
It is clear that there exists a sequence of random vectors $\xi^{m}$%
$=\sum_{i=1}^{m}x_{i}^{m}I_{A_{i}^{m}}$ such that $\mathbb{E}\left[
\left\vert \xi^{m}-\xi\right\vert ^{2}\right]  \rightarrow0$ as $m\rightarrow
\infty$, where $\left\{  A_{i}^{m}\right\}  _{i=1}^{m}$ is a partition of
$\left(  \Omega,\mathcal{F}_{t}\right)  $ and $x_{i}^{m}\in\mathbb{R}^{n}$.
Similar to the proof of Proposition \ref{pro-new-1}, we have%
\[%
\begin{array}
[c]{l}%
\left(  X_{s}^{t,\xi_{m};u},Y_{s}^{t,\xi_{m};u},Z_{s}^{t,\xi_{m};u}\right)
_{s\in\lbrack t,T]}\\
=\left(  \sum_{i=1}^{m}X_{s}^{t,x_{i}^{m};u}I_{A_{i}^{m}},\sum_{i=1}^{m}%
Y_{s}^{t,x_{i}^{m};u}I_{A_{i}^{m}},\sum_{i=1}^{m}Z_{s}^{t,x_{i}^{m};u}%
I_{A_{i}^{m}}\right)  _{s\in\lbrack t,T]}.
\end{array}
\]
Thus
\begin{equation}%
\begin{array}
[c]{rl}%
\underset{u\in\mathcal{U}[t,T]}{ess\inf}Y_{t}^{t,\xi_{m};u} & =\underset{u\in
\mathcal{U}[t,T]}{ess\inf}\sum\limits_{i=1}^{m}Y_{t}^{t,x_{i}^{m};u}%
I_{A_{i}^{m}}\\
& =\sum\limits_{i=1}^{m}\left(  \underset{u\in\mathcal{U}[t,T]}{ess\inf}%
Y_{t}^{t,x_{i}^{m};u}\right)  I_{A_{i}^{m}}\\
& =\sum\limits_{i=1}^{m}W\left(  t,x_{i}^{m}\right)  I_{A_{i}^{m}}\\
& =W\left(  t,\xi^{m}\right)  .
\end{array}
\label{eq-new-111}%
\end{equation}
On the other hand, by Lemmas \ref{est-initial} and \ref{le-w-new}, we have
\begin{equation}%
\begin{array}
[c]{rl}%
\left\vert \underset{u\in\mathcal{U}[t,T]}{ess\inf}Y_{t}^{t,\xi_{m}%
;u}-\underset{u\in\mathcal{U}[t,T]}{ess\inf}Y_{t}^{t,\xi;u}\right\vert  &
\leq\underset{u\in\mathcal{U}[t,T]}{ess\sup}\left\vert Y_{t}^{t,\xi_{m}%
;u}-Y_{t}^{t,\xi;u}\right\vert \\
& \leq C\left\vert \xi^{m}-\xi\right\vert ,
\end{array}
\label{eq-new-112}%
\end{equation}%
\begin{equation}
\left\vert W(t,\xi^{m})-W\left(  t,\xi\right)  \right\vert \leq C\left\vert
\xi^{m}-\xi\right\vert . \label{eq-new-113}%
\end{equation}
Combining (\ref{eq-new-111}), (\ref{eq-new-112}) and (\ref{eq-new-113}), we
get%
\[
\left\vert \underset{u\in\mathcal{U}[t,T]}{ess\inf}Y_{t}^{t,\xi;u}-W\left(
t,\xi\right)  \right\vert \leq2C\left\vert \xi^{m}-\xi\right\vert .
\]
Thus we obtain the desired result by letting $m\rightarrow\infty$.
\end{proof}

Before studying the DPP, we introduce the notion of backward semigroup, which
was first introduced by Peng in \cite{Peng-lecture}. For each given
$(t,x)\in\lbrack0,T)\times\mathbb{R}^{n}$, $\delta\in(0,T-t]$ and
$u\in\mathcal{U}[t,t+\delta]$,  define $G_{t,t+\delta}%
^{t,x;u}[\cdot]:\mathcal{L}(\mathbb{R}^{n})\rightarrow L^{2}(\mathcal{F}%
_{t};\mathbb{R})$ as
\[
G_{t,t+\delta}^{t,x;u}\left[  \psi(\tilde{X}_{t+\delta}^{t,x;u})\right]
=\tilde{Y}_{t}^{t,x;u}\text{ for each }\psi\in\mathcal{L}(\mathbb{R}^{n}),
\]
where
\[
\mathcal{L}(\mathbb{R}^{n})=\{f:\mathbb{R}^{n}\rightarrow\mathbb{R}%
|\ |f(x)-f(x^{\prime})|\leq L|x-x^{\prime}| \},
\]
$L$ is the constant in Assumption \ref{assum-1}  and $(\tilde{X}%
^{t,x;u},\tilde{Y}^{t,x;u}, \tilde{Z}^{t,x;u})$ is the solution to the
following FBSDE on $[t,t+\delta]$,%
\begin{equation}
\left\{
\begin{array}
[c]{rl}%
d\tilde{X}_{s}^{t,x;u}= & b(s,\tilde{X}_{s}^{t,x;u},\tilde{Y}_{s}%
^{t,x;u},\tilde{Z}_{s}^{t,x;u},u_{s})ds+\sigma(s,\tilde{X}_{s}^{t,x;u}%
,\tilde{Y}_{s}^{t,x;u},\tilde{Z}_{s}^{t,x;u},u_{s})dB_{s},\\
d\tilde{Y}_{s}^{t,x;u}= & -g(s,\tilde{X}_{s}^{t,x;u},\tilde{Y}_{s}%
^{t,x;u},\tilde{Z}_{s}^{t,x;u},u_{s})ds+\tilde{Z}_{s}^{t,x;u}dB_{s},\text{
}s\in\lbrack t,t+\delta],\\
\tilde{X}_{s}^{t,x;u}= & x,\ \tilde{Y}_{t+\delta}^{t,x;u}=\psi(\tilde
{X}_{t+\delta}^{t,x;u}).
\end{array}
\right.  \label{eq-xy-til}%
\end{equation}
Since the coefficients of (\ref{eq-xy-til}) satisfies Assumption
\ref{assum-1}, there exists a unique solution $(\tilde{X}^{t,x;u},\tilde
{Y}^{t,x;u},\tilde{Z}^{t,x;u})$ to (\ref{eq-xy-til}). Thus $G_{t,t+\delta
}^{t,x;u}[\cdot]$ is well-defined.

Now we prove the DPP for the control system (\ref{state-eq})-(\ref{obje-eq}).

\begin{theorem}
\label{th-ddp}Suppose Assumption \ref{assum-1} holds. Then for each
$(t,x)\in\lbrack0,T)\times\mathbb{R}^{n}$ and $\delta\in(0,T-t]$, we have%
\[
W(t,x)=\underset{u\in\mathcal{U}[t,t+\delta]}{ess\inf}G_{t,t+\delta}%
^{t,x;u}\left[  W(t+\delta,\tilde{X}_{t+\delta}^{t,x;u})\right]
=\underset{v\in\mathcal{U}^{t}[t,t+\delta]}{\inf}G_{t,t+\delta}^{t,x;v}\left[
W(t+\delta,\tilde{X}_{t+\delta}^{t,x;v})\right]  .
\]

\end{theorem}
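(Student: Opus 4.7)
The theorem contains two equalities: an $\mathrm{ess\,inf}\to\inf$ reduction and the DPP identity itself. The reduction repeats the scheme of Proposition \ref{pro-new-1} almost verbatim: any $u\in\mathcal{U}[t,t+\delta]$ is approximated in $L^2$ by $u^m=\sum_{i=1}^m v^{i,m}I_{A_i^m}$ with $v^{i,m}\in\mathcal{U}^t[t,t+\delta]$ and $\{A_i^m\}$ a partition of $(\Omega,\mathcal{F}_t)$; the only new ingredient is the $L^2$-continuity of $u\mapsto G^{t,x;u}_{t,t+\delta}[W(t+\delta,\tilde{X}_{t+\delta}^{t,x;u})]$, which follows from Theorem 2.2 in \cite{Hu-JX} applied to \eqref{eq-xy-til} together with the Lipschitz property of $W(t+\delta,\cdot)$ from Lemma \ref{le-w-new}. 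The pointwise identity $G^{t,x;u^m}=\sum_i G^{t,x;v^{i,m}}I_{A_i^m}\geq \inf_{v\in\mathcal{U}^t[t,t+\delta]}G^{t,x;v}$ then closes the reduction, so it remains to prove the two inequalities of the DPP identity.

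\emph{Direction $W(t,x)\geq \inf_v G^v$.} For $\epsilon>0$, Proposition \ref{pro-new-1} furnishes $v^\epsilon\in\mathcal{U}^t[t,T]$ with $Y_t^{t,x;v^\epsilon}\leq W(t,x)+\epsilon$. Uniqueness of the FBSDE on $[t+\delta,T]$ gives the Markovian identity $Y_{t+\delta}^{t,x;v^\epsilon}=Y_{t+\delta}^{t+\delta,X_{t+\delta}^{t,x;v^\epsilon};v^\epsilon|_{[t+\delta,T]}}$, and Proposition \ref{pro-w-random} then yields $Y_{t+\delta}^{t,x;v^\epsilon}\geq W(t+\delta,X_{t+\delta}^{t,x;v^\epsilon})$. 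I next introduce a first auxiliary FBSDE on $[t,t+\delta]$ with initial $x$, control $v^\epsilon|_{[t,t+\delta]}$ and random terminal $Y_{t+\delta}^{t,x;v^\epsilon}$; by uniqueness this is nothing but the restriction of the full FBSDE, so its $Y$-value at $t$ equals $Y_t^{t,x;v^\epsilon}$. Comparing this auxiliary system with the $G$-FBSDE, whose terminal is $W(t+\delta,\tilde{X}_{t+\delta})$, through the Lipschitz bound on $W$ and the FBSDE stability estimate of \cite{Hu-JX}, closed by the contraction condition $\bar{\Lambda}<1$ from Assumption \ref{assum-1}(ii), produces $G^{t,x;v^\epsilon|_{[t,t+\delta]}}_{t,t+\delta}[W(t+\delta,\cdot)]\leq Y_t^{t,x;v^\epsilon}\leq W(t,x)+\epsilon$, and letting $\epsilon\downarrow 0$ finishes this direction.

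\emph{Direction $W(t,x)\leq \inf_v G^v$.} Fix $v\in\mathcal{U}^t[t,t+\delta]$ and $\epsilon>0$, and let $(\tilde{X},\tilde{Y},\tilde{Z})$ be the $G$-FBSDE solution. Choose a Borel partition $\{O_i\}$ of $\mathbb{R}^n$ of diameter at most $\epsilon$ with centres $x_i$, and for each $i$ use Proposition \ref{pro-new-1} at time $t+\delta$ to pick $u^{\epsilon,i}\in\mathcal{U}^{t+\delta}[t+\delta,T]$ with $Y_{t+\delta}^{t+\delta,x_i;u^{\epsilon,i}}\leq W(t+\delta,x_i)+\epsilon$. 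Set $u^\epsilon_s=\sum_i u^{\epsilon,i}_s I_{\{\tilde{X}_{t+\delta}\in O_i\}}$ on $[t+\delta,T]$ and let $\hat{u}$ be the concatenation of $v$ and $u^\epsilon$; then $W(t,x)\leq Y_t^{t,x;\hat{u}}$ by definition of $W$. I then build a second auxiliary FBSDE on $[t,T]$ by gluing the $G$-FBSDE on $[t,t+\delta]$ to the FBSDE on $[t+\delta,T]$ with initial $\tilde{X}_{t+\delta}$, control $u^\epsilon$ and terminal $\phi$. Lemma \ref{est-initial}, the Lipschitz bound on $W(t+\delta,\cdot)$, and the near-optimality of each $u^{\epsilon,i}$ force the $Y_{t+\delta}$-value of this auxiliary system to lie within $O(\epsilon)$ of $\tilde{Y}_{t+\delta}=W(t+\delta,\tilde{X}_{t+\delta})$. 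Comparing the auxiliary FBSDE with the full FBSDE driven by $\hat{u}$, again through the stability estimate controlled by $\bar{\Lambda}<1$, gives $Y_t^{t,x;\hat{u}}\leq G^{t,x;v}_{t,t+\delta}[W(t+\delta,\tilde{X}_{t+\delta}^{t,x;v})]+C\epsilon$, and $\epsilon\downarrow 0$ closes the inequality.

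The principal obstacle, already flagged by the authors, is that in a fully coupled FBSDE any change of terminal condition at $t+\delta$ back-propagates through the $\sigma(\cdot,y,z,\cdot)$-coupling and perturbs $X$ on $[t,t+\delta]$; hence the forward components of the full-FBSDE restriction and of the $G$-FBSDE are genuinely different, and a one-step BSDE comparison cannot bridge them. Both auxiliary FBSDEs above are engineered so that their forward processes can be identified with either the full FBSDE (by uniqueness with a random terminal) or the $G$-FBSDE (by a measurable-selection construction over $\tilde{X}_{t+\delta}$), and the smallness condition $\bar{\Lambda}<1$ of Assumption \ref{assum-1}(ii) is precisely what turns the a priori Lipschitz dependence of FBSDE solutions on their data into a genuine $O(\epsilon)$-contraction estimate that makes the quantitative comparison go through.
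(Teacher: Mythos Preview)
Your overall architecture matches the paper's: prove the two DPP inequalities separately and obtain the $\mathrm{ess\,inf}=\inf$ identity by sandwiching (the paper does not do the reduction as a stand-alone step; it proves $W\geq\inf_{v\in\mathcal{U}^t}G^v$ and $W\leq\mathrm{ess\,inf}_{u\in\mathcal{U}}G^u$ and combines them with the trivial $\inf_v\geq\mathrm{ess\,inf}_u$). The gaps lie in how you execute the two comparison steps.

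\textbf{Direction $W(t,x)\geq\inf_v G^v$.} You invoke a ``stability estimate'' to produce the one-sided bound $G^{t,x;v^\epsilon}_{t,t+\delta}[W(t+\delta,\cdot)]\leq Y_t^{t,x;v^\epsilon}$, but a stability estimate only yields a two-sided bound $|G-Y|\leq C\,\|\text{terminal difference}\|$; here the terminal difference $Y_{t+\delta}^{t,x;v^\epsilon}-W(t+\delta,X_{t+\delta}^{t,x;v^\epsilon})$ is nonnegative but not small, so stability gives nothing. The paper's tool is the FBSDE \emph{comparison theorem} (Theorem~\ref{th-comp}): from $Y_{t+\delta}^{t,x;v}\geq W(t+\delta,X_{t+\delta}^{t,x;v})$ (this is the hypothesis $\phi_1(X^1_{t+\delta})\geq\phi_2(X^1_{t+\delta})$ in that theorem) one gets $Y_t^{t,x;v}\geq\tilde Y_t^{t,x;v}$ directly, for every $v$, with no $\epsilon$-optimal control needed. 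This comparison result is proved via an adjoint FBSDE in the Appendix and is the missing ingredient in your argument for this direction.

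\textbf{Direction $W(t,x)\leq\mathrm{ess\,inf}_u G^u$.} Your measurable-selection construction of $u^\epsilon$ is essentially the paper's Step~1. The problem is the ``gluing'': concatenating $(\tilde X,\tilde Y,\tilde Z)$ on $[t,t+\delta]$ with the $[t+\delta,T]$-FBSDE started at $\tilde X_{t+\delta}$ produces a process with a jump in $Y$ at $t+\delta$ (from $W(t+\delta,\tilde X_{t+\delta})$ to $Y_{t+\delta}^{t+\delta,\tilde X_{t+\delta};u^\epsilon}$), so it is not an FBSDE solution on $[t,T]$ and Theorem~2.2 of \cite{Hu-JX} does not apply to it. The paper's key device is the intermediate \emph{decoupled} FBSDE \eqref{new-new-3422}: freeze $(\tilde Y,\tilde Z)$ in the forward coefficients on $[t,t+\delta]$ so that the forward part stays $\tilde X$, but solve a BSDE with terminal $Y_{t+\delta}^{t+\delta,\tilde X_{t+\delta};u^m}$, obtaining $(\bar Y^m,\bar Z^m)\to(\tilde Y,\tilde Z)$ by a pure BSDE estimate. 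The glued process $(\tilde X,\bar Y^m,\bar Z^m)$ on $[t,t+\delta]$ pasted with the $[t+\delta,T]$-FBSDE is then \emph{continuous} and solves the perturbed FBSDE \eqref{eq-xy-til-t} with vanishing perturbations $l^1,l^2$, to which the stability estimate legitimately applies. Your sketch omits this freezing step, which is exactly what makes the forward component identifiable with $\tilde X$ while keeping the backward component continuous.
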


\begin{proof}
We first prove
\begin{equation}
W(t,x)\geq\underset{v\in\mathcal{U}^{t}[t,t+\delta]}{\inf}G_{t,t+\delta
}^{t,x;v}\left[  W(t+\delta,\tilde{X}_{t+\delta}^{t,x;v})\right]  .
\label{new-141}%
\end{equation}
For each given $v\in\mathcal{U}^{t}[t,T]$, noting that
\[
\left(  X_{s}^{t,x;v},Y_{s}^{t,x;v},Z_{s}^{t,x;v}\right)  _{s\in\left[
t+\delta,T\right]  }=\left(  X_{s}^{t+\delta,X_{t+\delta}^{t,x;v};v}%
,Y_{s}^{t+\delta,X_{t+\delta}^{t,x;v};v},Z_{s}^{t+\delta,X_{t+\delta}%
^{t,x;v};v}\right)  _{s\in\left[  t+\delta,T\right]  },
\]
we have%
\[
\left\{
\begin{array}
[c]{rl}%
dX_{s}^{t,x;v}= & b(s,X_{s}^{t,x;v},Y_{s}^{t,x;v},Z_{s}^{t,x;v},v_{s}%
)ds+\sigma(s,X_{s}^{t,x;v},Y_{s}^{t,x;v},Z_{s}^{t,x;v},v_{s})dB_{s},\\
dY_{s}^{t,x;v}= & -g(s,X_{s}^{t,x;v},Y_{s}^{t,x;v},Z_{s}^{t,x;v}%
,v_{s})ds+Z_{s}^{t,x;v}dB_{s},\text{ }s\in\lbrack t,t+\delta],\\
X_{t}^{t,x;v}= & x,\ Y_{t+\delta}^{t,x;v}=Y_{t+\delta}^{t+\delta,X_{t+\delta
}^{t,x;v};v}.
\end{array}
\right.
\]
By Proposition \ref{pro-w-random}, we get \ $W(t+\delta,X_{t+\delta}%
^{t,x;v})\leq Y_{t+\delta}^{t+\delta,X_{t+\delta}^{t,x;v};v}. $ Taking $u=v$
and $\psi(\cdot)=W(t+\delta,\cdot)$ in (\ref{eq-xy-til}), by comparison
theorem for FBSDE (see Theorem \ref{th-comp} ), we get \ $Y_{t}^{t,x;v}%
\geq\tilde{Y}_{t}^{t,x;v}. $ By Proposition \ref{pro-new-1}, we obtain
(\ref{new-141}).
%\[
%W(t,x)\geq\underset{v\in\mathcal{U}^{t}[t,t+\delta]}{\inf}G_{t,t+\delta
%}^{t,x;v}\left[  W(t+\delta,\tilde{X}_{t+\delta}^{t,x;v})\right]  .
%\]

Next, we prove%
\begin{equation}
W(t,x)\leq\underset{u\in\mathcal{U}[t,t+\delta]}{ess\inf}G_{t,t+\delta
}^{t,x;u}\left[  W(t+\delta,\tilde{X}_{t+\delta}^{t,x;u})\right]  .
\label{new-142}%
\end{equation}
It is obvious that we only need to prove%
\begin{equation}
G_{t,t+\delta}^{t,x;u}\left[  W(t+\delta,\tilde{X}_{t+\delta}^{t,x;u})\right]
\geq W(t,x),\text{ }P\text{-a.s.} \label{new-111}%
\end{equation}
for each $u\in\mathcal{U}[t,t+\delta]$. The proof for (\ref{new-111}) is
divided into four steps.

\textbf{Step 1.} Let $(\tilde{X}^{t,x;u},\tilde{Y}^{t,x;u},\tilde{Z}^{t,x;u})$
be the solution to the following FBSDE:
\begin{equation}
\left\{
\begin{array}
[c]{rl}%
d\tilde{X}_{s}^{t,x;u}= & b(s,\tilde{X}_{s}^{t,x;u},\tilde{Y}_{s}%
^{t,x;u},\tilde{Z}_{s}^{t,x;u},u_{s})ds+\sigma(s,\tilde{X}_{s}^{t,x;u}%
,\tilde{Y}_{s}^{t,x;u},\tilde{Z}_{s}^{t,x;u},u_{s})dB_{s},\\
d\tilde{Y}_{s}^{t,x;u}= & -g(s,\tilde{X}_{s}^{t,x;u},\tilde{Y}_{s}%
^{t,x;u},\tilde{Z}_{s}^{t,x;u},u_{s})ds+\tilde{Z}_{s}^{t,x;u}dB_{s},\text{
}s\in\lbrack t,t+\delta],\\
\tilde{X}_{t}^{t,x;u}= & x,\ \tilde{Y}_{t+\delta}^{t,x;u}=W(t+\delta,\tilde
{X}_{t+\delta}^{t,x;u}).
\end{array}
\right.  \label{eq-xy-til-w}%
\end{equation}
Since $\tilde{X}_{t+\delta}^{t,x;u}\in L^{2}(\mathcal{F}_{t+\delta}%
;\mathbb{R}^{n})$, for each integer $m$, we can choose a partition
$\{A_{i}^{m}:i=1,\ldots,m\}$ of $\mathcal{F}_{t+\delta}$ and $x_{i}^{m}%
\in\mathbb{R}^{n}$ such that \ $\mathbb{E}\left[  \left\vert \xi^{m}-\tilde
{X}_{t+\delta}^{t,x;u}\right\vert ^{2}\right]  \rightarrow0\text{ as
}m\rightarrow\infty, $ where $\xi^{m}=\sum_{i=1}^{m}x_{i}^{m}I_{A_{i}^{m}}$.
For each given $x_{i}^{m}$, by Proposition \ref{pro-new-1}, we can find a
$v^{i,m}\in\mathcal{U}^{t+\delta}[t+\delta,T]$ such that%
\begin{equation}
W(t+\delta,x_{i}^{m})\leq Y_{t+\delta}^{t+\delta,x_{i}^{m};v^{i,m}}\leq
W(t+\delta,x_{i}^{m})+\frac{1}{m}. \label{new-131}%
\end{equation}
Set \ $u_{s}^{m}=u_{s}I_{[t,t+\delta]}(s)+\left(  \sum_{i=1}^{m}v_{s}%
^{i,m}I_{A_{i}^{m}}\right)  I_{(t+\delta,T]}(s). $

\textbf{Step 2}. By (\ref{est-initial-1}) in Lemma \ref{est-initial}, we get%
\begin{equation}
\mathbb{E}\left[  \left\vert Y_{t+\delta}^{t+\delta,\tilde{X}_{t+\delta
}^{t,x;u};u^{m}}-Y_{t+\delta}^{t+\delta,\xi^{m};u^{m}}\right\vert ^{2}\right]
\leq C\mathbb{E}\left[  \left\vert \tilde{X}_{t+\delta}^{t,x;u}-\xi
^{m}\right\vert ^{2}\right]  \rightarrow0\text{ as }m\rightarrow\infty.
\label{new-new-2312}%
\end{equation}
Similar to the proof of Proposition \ref{pro-new-1}, one can check that
\begin{equation}%
\begin{array}
[c]{l}%
\left(  X_{s}^{t+\delta,\xi^{m};u^{m}},Y_{s}^{t+\delta,\xi^{m};u^{m}}%
,Z_{s}^{t+\delta,\xi^{m};u^{m}}\right)  _{s\in\lbrack t+\delta,T]}\\
=\left(  \displaystyle\sum_{i=1}^{m}X_{s}^{t+\delta,x_{i}^{m};v^{i,m}}%
I_{A_{i}^{m}},\displaystyle\sum_{i=1}^{m}Y_{s}^{t+\delta,x_{i}^{m};v^{i,m}%
}I_{A_{i}^{m}},\displaystyle\sum_{i=1}^{m}Z_{s}^{t+\delta,x_{i}^{m};v^{i,m}%
}I_{A_{i}^{m}}\right)  _{s\in\lbrack t+\delta,T]}.
\end{array}
\label{new-132}%
\end{equation}
Combining (\ref{new-131}) and (\ref{new-132}), we obtain%
\begin{equation}
W(t+\delta,\xi^{m})\leq Y_{t+\delta}^{t+\delta,\xi^{m};u^{m}}\leq
W(t+\delta,\xi^{m})+\frac{1}{m}. \label{new-133}%
\end{equation}
Thus, by Lemma \ref{le-w-new},
\begin{equation}
\mathbb{E}\left[  \left\vert Y_{t+\delta}^{t+\delta,\xi^{m};u^{m}}%
-W(t+\delta,\tilde{X}_{t+\delta}^{t,x;u})\right\vert ^{2}\right]
\rightarrow0\text{ as }m\rightarrow\infty. \label{y-hat-w}%
\end{equation}
By (\ref{new-new-2312}) and (\ref{y-hat-w}), we get
\begin{equation}
\mathbb{E}\left[  \left\vert Y_{t+\delta}^{t+\delta,\tilde{X}_{t+\delta
}^{t,x;u};u^{m}}-W(t+\delta,\tilde{X}_{t+\delta}^{t,x;u})\right\vert
^{2}\right]  \rightarrow0\text{ as }m\rightarrow\infty. \label{new-134}%
\end{equation}
Consider the following decoupled FBSDE:%
\begin{equation}
\left\{
\begin{array}
[c]{rl}%
d\bar{X}_{s}^{m} & =b(s,\bar{X}_{s}^{m},\tilde{Y}_{s}^{t,x;u},\tilde{Z}%
_{s}^{t,x;u},u_{s})ds+\sigma(s,\bar{X}_{s}^{m},\tilde{Y}_{s}^{t,x;u},\tilde
{Z}_{s}^{t,x;u},u_{s})dB_{s},\\
d\bar{Y}_{s}^{m} & =-g(s,\bar{X}_{s}^{m},\bar{Y}_{s}^{m},\bar{Z}_{s}^{m}%
,u_{s}^{m})ds+\bar{Z}_{s}^{m}dB_{s},\text{ }s\in\lbrack t,t+\delta],\\
\bar{X}_{t}^{m} & =x,\ \bar{Y}_{t+\delta}^{m}=Y_{t+\delta}^{t+\delta,\tilde
{X}_{t+\delta}^{t,x;u};u^{m}}.
\end{array}
\right.  \label{new-new-3422}%
\end{equation}
By (\ref{eq-xy-til-w}), we know that $(\tilde{X}_{s}^{t,x;u})_{s\in\lbrack
t,t+\delta]}$ satisfies the SDE in (\ref{new-new-3422}), which implies
$\bar{X}_{s}^{m}=\tilde{X}_{s}^{t,x;u}$ on $[t,t+\delta]$. Thus, by the
estimate of BSDE, we get%
\begin{equation}%
\begin{array}
[c]{rl}
& \mathbb{E}\left[  \underset{s\in\lbrack t,t+\delta]}{\sup}\left\vert
\tilde{Y}_{s}^{t,x;u}-\bar{Y}_{s}^{m}\right\vert ^{2}+\int_{t}^{t+\delta
}\left\vert \tilde{Z}_{s}^{t,x;u}-\bar{Z}_{s}^{m}\right\vert ^{2}ds\right] \\
& \ \ \leq C\mathbb{E}\left[  \left\vert Y_{t+\delta}^{t+\delta,\tilde
{X}_{t+\delta}^{t,x;u};u^{m}}-W(t+\delta,\tilde{X}_{t+\delta}^{t,x;u}%
)\right\vert ^{2}\right]  .
\end{array}
\label{new-124}%
\end{equation}
It follows from (\ref{new-134}) and (\ref{new-124}) that
\begin{equation}
\mathbb{E}\left[  \underset{s\in\lbrack t,t+\delta]}{\sup}\left\vert \tilde
{Y}_{s}^{t,x;u}-\bar{Y}_{s}^{m}\right\vert ^{2}+\int_{t}^{t+\delta}\left\vert
\tilde{Z}_{s}^{t,x;u}-\bar{Z}_{s}^{m}\right\vert ^{2}ds\right]  \rightarrow0
\label{new-new-3423}%
\end{equation}
as $m\rightarrow\infty.$

\textbf{Step 3.} Define $(X_{s}^{m},Y_{s}^{m},Z_{s}^{m})_{s\in\lbrack t,T]}$
as follows%
\begin{align*}
(X_{s}^{m},Y_{s}^{m},Z_{s}^{m})_{s\in\lbrack t,t+\delta]}  &  =(\bar{X}%
_{s}^{m},\bar{Y}_{s}^{m},\bar{Z}_{s}^{m})_{s\in\lbrack t,t+\delta]},\text{ }\\
(X_{s}^{m},Y_{s}^{m},Z_{s}^{m})_{s\in(t+\delta,T]}  &  =(X_{s}^{t+\delta
,\tilde{X}_{t+\delta}^{t,x;u};u^{m}},Y_{s}^{t+\delta,\tilde{X}_{t+\delta
}^{t,x;u};u^{m}},Z_{s}^{t+\delta,\tilde{X}_{t+\delta}^{t,x;u};u^{m}}%
)_{s\in(t+\delta,T]}.
\end{align*}
It is easy to verify that $(X_{s}^{m},Y_{s}^{m},Z_{s}^{m})_{s\in\lbrack t,T]}$
satisfies the following FBSDE:
\begin{equation}
\left\{
\begin{array}
[c]{rl}%
dX_{s}^{m} & =[b(s,X_{s}^{m},Y_{s}^{m},Z_{s}^{m},u_{s}^{m})+l_{s}%
^{1}]ds+[\sigma(s,X_{s}^{m},Y_{s}^{m},Z_{s}^{m},u_{s}^{m})+l_{s}^{2}]dB_{s},\\
dY_{s}^{m} & =-g(s,X_{s}^{m},Y_{s}^{m},Z_{s}^{m},u_{s}^{m})ds+Z_{s}^{m}%
dB_{s},\\
X_{t}^{m} & =x,\ Y_{T}^{m}=\phi\left(  X_{T}^{m}\right)  ,
\end{array}
\right.  \label{eq-xy-til-t}%
\end{equation}
where%
\[%
\begin{array}
[c]{l}%
l_{s}^{1}=b(s,\bar{X}_{s}^{m},\tilde{Y}_{s}^{t,x;u},\tilde{Z}_{s}%
^{t,x;u},u_{s})-b(s,\bar{X}_{s}^{m},\bar{Y}_{s}^{m},\bar{Z}_{s}^{m}%
,u_{s}),\text{ }s\in\lbrack t,t+\delta];\\
\text{ }l_{s}^{1}=0,\text{ }s\in(t+\delta,T];\\
l_{s}^{2}=\sigma(s,\bar{X}_{s}^{m},\tilde{Y}_{s}^{t,x;u},\tilde{Z}_{s}%
^{t,x;u},u_{s})-\sigma(s,\bar{X}_{s}^{m},\bar{Y}_{s}^{m},\bar{Z}_{s}^{m}%
,u_{s}),\text{ }s\in\lbrack t,t+\delta];\\
\text{ }l_{s}^{2}=0,\text{ }s\in(t+\delta,T].
\end{array}
\]
By Theorem 2.2 in \cite{Hu-JX}, we obtain
\begin{equation}%
\begin{array}
[c]{l}%
\mathbb{E}\left[  \left\vert Y_{t}^{m}-Y_{t}^{t,x;u^{m}}\right\vert
^{2}\right]  =\mathbb{E}\left[  \left\vert \bar{Y}_{t}^{m}-Y_{t}^{t,x;u^{m}%
}\right\vert ^{2}\right] \\
\leq C\mathbb{E}\left[  \displaystyle{\int}_{t}^{t+\delta}\left(  \left\vert
\tilde{Y}_{s}^{t,x;u}-\bar{Y}_{s}^{m}\right\vert ^{2}+\left\vert \tilde{Z}%
_{s}^{t,x;u}-\bar{Z}_{s}^{m}\right\vert ^{2}\right)  ds\right]  .
\end{array}
\label{new-122}%
\end{equation}

\textbf{Step 4.} By (\ref{new-new-3423}) and (\ref{new-122}), we get as
$m\rightarrow\infty$
\begin{equation}
\mathbb{E}\left[  \left\vert Y_{t}^{t,x;u^{m}}-\tilde{Y}_{t}^{t,x;u}%
\right\vert ^{2}\right]  \leq2\left\{  \mathbb{E}\left[  \left\vert
Y_{t}^{t,x;u^{m}}-\bar{Y}_{t}^{m}\right\vert ^{2}\right]  +\mathbb{E}\left[
\left\vert \bar{Y}_{t}^{m}-\tilde{Y}_{s}^{t,x;u}\right\vert ^{2}\right]
\right\}  \rightarrow0. \label{new-121}%
\end{equation}
By the definition of $W(t,x)$, we know that $Y_{t}^{t,x;u^{m}}\geq W(t,x)$
$P$-a.s. for $m\geq1$. Thus we obtain (\ref{new-111}) by (\ref{new-121}).

Finally, since
\[
\underset{v\in\mathcal{U}^{t}[t,t+\delta]}{\inf}G_{t,t+\delta}^{t,x;v}\left[
W(t+\delta,\tilde{X}_{t+\delta}^{t,x;v})\right]  \geq\underset{u\in
\mathcal{U}[t,t+\delta]}{ess\inf}G_{t,t+\delta}^{t,x;u}\left[  W(t+\delta
,\tilde{X}_{t+\delta}^{t,x;u})\right]  ,
\]
we obtain the desired result by (\ref{new-141}) and (\ref{new-142}).
\end{proof}

\begin{remark}
It is important to note that $(Y_{s}^{t,x;u^{m}},Z_{s}^{t,x;u^{m}}%
)_{s\in\lbrack t,t+\delta]}$ varies with $u^{m}$, which leads to the change of
$(X_{s}^{t,x;u^{m}})_{s\in\lbrack t,t+\delta]}$ in the fully coupled case.
This is different from the decoupled case, and the approach of establishing
the DPP for the decoupled case does not work now. To overcome this difficulty,
we introduce two auxiliary FBSDEs (\ref{new-new-3422}) and (\ref{eq-xy-til-t})
to prove the DPP.
\end{remark}

Now we prove the continuity property of $W(t,x)$ in $t$.

\begin{lemma}
\label{le-w-t}Suppose Assumption \ref{assum-1} holds. Then the value function
$W(t,x)$ is $\frac{1}{2}$ H\"{o}lder continuous in $t$.
\end{lemma}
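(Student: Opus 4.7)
The plan is to reduce the claim to a uniform short-interval estimate via the dynamic programming principle of Theorem \ref{th-ddp}. Writing
$W(t,x)=\inf_{v\in\mathcal{U}^t[t,t+\delta]}\tilde{Y}_t^{t,x;v}$
where $(\tilde{X}^{t,x;v},\tilde{Y}^{t,x;v},\tilde{Z}^{t,x;v})$ solves the FBSDE (\ref{eq-xy-til-w}), it is enough to show
$|\tilde{Y}_t^{t,x;v}-W(t+\delta,x)|\leq C\sqrt{\delta}(1+|x|)$
uniformly in $v\in\mathcal{U}^t[t,t+\delta]$; taking $\inf$ over $v$ then produces the desired $\tfrac12$-H\"older estimate. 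The analogous comparison from below, $|W(t+\delta,x)-W(t,x)|\leq C\sqrt{\delta}(1+|x|)$, is obtained symmetrically.

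Since $v\in\mathcal{U}^t[t,t+\delta]$ and the initial datum is deterministic, $\tilde{Y}_t^{t,x;v}$ is a constant, so taking expectation in the backward equation on $[t,t+\delta]$ yields the clean decomposition
$$\tilde{Y}_t^{t,x;v}-W(t+\delta,x)=\mathbb{E}\!\int_t^{t+\delta}\!g(s,\tilde{X}_s,\tilde{Y}_s,\tilde{Z}_s,v_s)\,ds+\mathbb{E}\bigl[W(t+\delta,\tilde{X}_{t+\delta}^{t,x;v})-W(t+\delta,x)\bigr].$$
The first term is controlled by $O(\sqrt{\delta}(1+|x|))$ using the linear growth of $g$ (Remark \ref{linear growth}), Cauchy--Schwarz on the $\int|\tilde{Z}|\,ds$ piece, and the a priori bound (\ref{est-initial-2}) from Lemma \ref{est-initial}. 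For the second term, the Lipschitz estimate of Lemma \ref{le-w-new} gives $|\mathbb{E}[W(t+\delta,\tilde{X}_{t+\delta}^{t,x;v})-W(t+\delta,x)]|\leq C(\mathbb{E}|\tilde{X}_{t+\delta}^{t,x;v}-x|^2)^{1/2}$, so everything reduces to proving the short-interval moment bound
$$\mathbb{E}\bigl[|\tilde{X}_{t+\delta}^{t,x;v}-x|^2\bigr]\leq C\delta(1+|x|^2). \qquad (\star)$$

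The main obstacle is $(\star)$. A direct application of It\^o's isometry only gives $\mathbb{E}|\tilde{X}_{t+\delta}-x|^2\leq C\delta\,\mathbb{E}\!\int|b|^2\,ds+C\,\mathbb{E}\!\int|\sigma|^2\,ds$, and because $\sigma$ is Lipschitz in $z$ with nontrivial constant $L_3$, the $|\tilde{Z}|^2$ contribution to the second integral is only bounded by $C(1+|x|^2)$ rather than $C\delta(1+|x|^2)$ under the standard FBSDE estimate (\ref{est-initial-2}). To close the loop I would use the auxiliary FBSDE (\ref{FBSDE-continuous}) introduced for this purpose in the paper: apply It\^o's formula to $|\tilde{Y}_s-W(t+\delta,x)|^2$ on $[t,t+\delta]$ and invoke the Lipschitz bound $|\tilde{Y}_{t+\delta}-W(t+\delta,x)|\leq C|\tilde{X}_{t+\delta}-x|$ from Lemma \ref{le-w-new}. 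The stochastic integral vanishes in expectation, producing a coupled pair of inequalities for $A:=\mathbb{E}\!\int_t^{t+\delta}|\tilde{Z}|^2\,ds$ and $B:=\mathbb{E}|\tilde{X}_{t+\delta}-x|^2$ roughly of the form $A\leq C_1 B+C\delta(1+|x|^2)+C(1+|x|)\sqrt{\delta A}$ and $B\leq C\delta(1+|x|^2)+CL_3^2 A$. The smallness condition $\bar{\Lambda}<1$ of Assumption \ref{assum-1}(ii) is precisely what allows the $L_3^2 A$ term to be absorbed into the left hand side after substitution, and Young's inequality disposes of the remaining $\sqrt{A}$ cross term, yielding $A,B\leq C\delta(1+|x|^2)$ and hence $(\star)$. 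Feeding $(\star)$ back into the decomposition completes the proof.
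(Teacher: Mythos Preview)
Your overall strategy is exactly that of the paper: invoke the DPP (Theorem \ref{th-ddp}), take expectations to split $\tilde{Y}_t^{t,x;v}-W(t+\delta,x)$ into the running-cost integral and the terminal increment $\mathbb{E}[W(t+\delta,\tilde{X}_{t+\delta})-W(t+\delta,x)]$, bound the first by Cauchy--Schwarz and the a~priori estimate, and reduce the second to the short-interval moment bound $(\star)$. (Minor remark: once the uniform two-sided bound $|\tilde{Y}_t^{t,x;v}-W(t+\delta,x)|\leq C\sqrt{\delta}(1+|x|)$ is in hand, both inequalities for $W(t,x)-W(t+\delta,x)$ follow immediately from $\inf$; there is no separate ``symmetric'' argument.)

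The only real difference is in how you establish $(\star)$. The paper also writes down the shifted triple $(\hat{X},\hat{Y},\hat{Z})=(\tilde{X}-x,\tilde{Y}-W(t+\delta,x),\tilde{Z})$ and the FBSDE (\ref{FBSDE-continuous}) it satisfies, but then simply observes that this system has the \emph{same} Lipschitz constants as (\ref{state-eq}) (the terminal map $\hat{x}\mapsto W(t+\delta,\hat{x}+x)-W(t+\delta,x)$ is $L_W\leq L$-Lipschitz by Lemma \ref{le-w-new}), so the ready-made estimate of Theorem~2.2 in \cite{Hu-JX} applies verbatim and gives
\[
\mathbb{E}\Bigl[\sup_{t\leq s\leq t+\delta}|\hat{X}_s|^2\Bigr]
\leq C\!\int_t^{t+\delta}\!\bigl(|b|^2+|g|^2+|\sigma|^2\bigr)(s,x,W(t+\delta,x),0,v_s)\,ds
\leq C(1+|x|^2)\delta.
\]
Your proposed route---It\^o on $|\hat{Y}|^2$, coupled inequalities for $A=\mathbb{E}\!\int|\tilde{Z}|^2$ and $B=\mathbb{E}|\tilde{X}_{t+\delta}-x|^2$, absorption via the smallness hypothesis---is precisely the argument inside the proof of Theorem~2.2, specialized to this system. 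It works, but you would have to re-track the constants to confirm that $\bar{\Lambda}<1$ really closes your particular pair of inequalities; invoking Theorem~2.2 as a black box is cleaner and guarantees this automatically.
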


\begin{proof}
For each $(t,x)\in\lbrack0,T)\times\mathbb{R}^{n}$ and $\delta\in(0,T-t]$, by
Theorem \ref{th-ddp}, we have
\[
W(t,x)=\inf_{v\in\mathcal{U}^{t}[t,t+\delta]}G_{t,t+\delta}^{t,x;v}\left[
W(t+\delta,\tilde{X}_{t+\delta}^{t,x;v})\right]  .
\]
Thus%
\[
\left\vert W(t,x)-W\left(  t+\delta,x\right)  \right\vert \leq\underset{v\in
\mathcal{U}^{t}[t,t+\delta]}{\sup}\left\vert G_{t,t+\delta}^{t,x;v}\left[
W(t+\delta,\tilde{X}_{t+\delta}^{t,x;v})\right]  -W\left(  t+\delta,x\right)
\right\vert .
\]
For each $v\in\mathcal{U}^{t}[t,t+\delta]$, by the definition of
$G_{t,t+\delta}^{t,x;v}\left[  \cdot\right]  $, we have
%\[
%\begin{array}
%[c]{l}%
\ $G_{t,t+\delta}^{t,x;v}\left[  W(t+\delta,\tilde{X}_{t+\delta}%
^{t,x;v})\right]  \newline=\mathbb{E}\left[  W(t+\delta,\tilde{X}_{t+\delta
}^{t,x;v})+\int_{t}^{t+\delta}g\left(  s,\tilde{X}_{s}^{t,x;v},\tilde{Y}%
_{s}^{t,x;v},\tilde{Z}_{s}^{t,x;v},v_{s}\right)  ds\right]  . $
%\end{array}
%\]
Thus, by Lemma \ref{le-w-new},
\begin{equation}%
\begin{array}
[c]{l}%
\left\vert G_{t,t+\delta}^{t,x;v}\left[  W(t+\delta,\tilde{X}_{t+\delta
}^{t,x;v})\right]  -W\left(  t+\delta,x\right)  \right\vert \\
\leq\mathbb{E}\left[  \left\vert W(t+\delta,\tilde{X}_{t+\delta}%
^{t,x;v})-W\left(  t+\delta,x\right)  \right\vert +\displaystyle{\int}%
_{t}^{t+\delta}\left\vert g\left(  s,\tilde{X}_{s}^{t,x;v},\tilde{Y}%
_{s}^{t,x;v},\tilde{Z}_{s}^{t,x;v},v_{s}\right)  \right\vert ds\right] \\
\leq C\mathbb{E}\left[  \left\vert \tilde{X}_{t+\delta}^{t,x;v}-x\right\vert
+\displaystyle{\int}_{t}^{t+\delta}\left(  1+\left\vert \tilde{X}_{s}%
^{t,x;v}\right\vert +\left\vert \tilde{Y}_{s}^{t,x;v}\right\vert +\left\vert
\tilde{Z}_{s}^{t,x;v}\right\vert \right)  ds\right]  .
\end{array}
\label{eq-new-133}%
\end{equation}
It follows from Theorem 2.2 in \cite{Hu-JX} that
\[%
\begin{array}
[c]{rl}%
\mathbb{E}\left[  \sup\limits_{t\leq s\leq t+\delta}\left(  |\tilde{X}%
_{s}^{t,x;v}|^{2}+|\tilde{Y}_{s}^{t,x;v}|^{2}\right)  +\displaystyle{\int}%
_{t}^{t+\delta}|\tilde{Z}_{s}^{t,x;v}|^{2}ds\right]  & \leq C\left(
1+\left\vert x\right\vert ^{2}\right)  ,
\end{array}
\]
which implies that%
\begin{equation}
\mathbb{E}\left[  \int_{t}^{t+\delta}\left(  1+\left\vert \tilde{X}%
_{s}^{t,x;v}\right\vert +\left\vert \tilde{Y}_{s}^{t,x;v}\right\vert
+\left\vert \tilde{Z}_{s}^{t,x;v}\right\vert \right)  ds\right]  \leq C\left(
1+\left\vert x\right\vert \right)  \delta^{\frac{1}{2}}. \label{eq-new-134}%
\end{equation}
By (\ref{eq-new-133}) and (\ref{eq-new-134}), we just need to estimate
$\mathbb{E}\left[  \left\vert \tilde{X}_{t+\delta}^{t,x;v}-x\right\vert
\right]  $. Denote $\hat{X}_{s}=\tilde{X}_{s}^{t,x;v}-x$, $\hat{Y}_{s}%
=\tilde{Y}_{s}^{t,x;v}-W\left(  t+\delta,x\right)  $, $\hat{Z}_{s}=\tilde
{Z}_{s}^{t,x;v}$, then $\left(  \hat{X},\hat{Y},\hat{Z}\right)  $ satisfies
the following FBSDE:
\begin{equation}
\left\{
\begin{array}
[c]{rl}%
d\hat{X}_{s}= & b(s,\hat{X}_{s}+x,\hat{Y}_{s}+W\left(  t+\delta,x\right)
,\hat{Z}_{s},v_{s})ds\\
& +\sigma(s,\hat{X}_{s}+x,\hat{Y}_{s}+W\left(  t+\delta,x\right)  ,\hat{Z}%
_{s},v_{s})dB_{s},\\
d\hat{Y}_{s}= & -g(s,\hat{X}_{s}+x,\hat{Y}_{s}+W\left(  t+\delta,x\right)
,\hat{Z}_{s},v_{s})ds+\hat{Z}_{s}dB_{s},\text{ }s\in\lbrack t,t+\delta],\\
\hat{X}_{t}= & 0,\ \hat{Y}_{t+\delta}=W(t+\delta,\hat{X}_{t+\delta
}+x)-W\left(  t+\delta,x\right)  .
\end{array}
\right.  \label{FBSDE-continuous}%
\end{equation}
By Theorem 2.2 in \cite{Hu-JX} and Lemma \ref{le-w-new}, we get
\[%
\begin{array}
[c]{rl}%
\mathbb{E}\left[  \sup\limits_{t\leq s\leq t+\delta}\left\vert \hat{X}%
_{s}\right\vert ^{2}\right]  & \leq C\mathbb{E}\left[  \left(
\displaystyle{\int}_{t}^{t+\delta}[|b|+|g|](s,x,W\left(  t+\delta,x\right)
,0,v_{s})ds\right)  ^{2}\right] \\
& \text{ \ }+C\mathbb{E}\left[  \displaystyle{\int}_{t}^{t+\delta}\left\vert
\sigma(s,x,W\left(  t+\delta,x\right)  ,0,v_{s})\right\vert ^{2}ds\right] \\
& \leq C\left(  1+\left\vert x\right\vert ^{2}\right)  \delta,
\end{array}
\]
which yields $\mathbb{E}[\vert\tilde{X}_{t+\delta}^{t,x;u}-x\vert] \leq
C\left(  1+\left\vert x\right\vert \right)  \delta^{\frac{1}{2}}$. Noting that
the above constant $C$ does not depend on $u$, then \ $|W(t,x)-W\left(
t+\delta,x\right)  |\leq C\left(  1+\left\vert x\right\vert \right)
\delta^{\frac{1}{2}}. $ This completes the proof.
\end{proof}

\begin{remark}
In order to prove $\mathbb{E} [\vert\tilde{X}_{t+\delta}^{t,x;v}-x\vert] \leq
C\left(  1+\left\vert x\right\vert \right)  \delta^{\frac{1}{2}}$, we
construct another FBSDE, which different from the proof in \cite{Li-W}.
Specially, we do not need additional assumption on $L_{3}$ as in \cite{Li-W}.
\end{remark}

\subsection{The value function and the HJB equation}

In this subsection, we show that the value function $W(t,x)$ defined in
(\ref{obje-eq}) is a viscosity solution to the following HJB equation
\begin{equation}
\left\{
\begin{array}
[c]{l}%
\partial_{t}W(t,x)+\inf\limits_{u\in U}H(t,x,W(t,x),DW(t,x),D^{2}W\left(
t,x\right)  ,u)=0,\\
W(T,x)=\phi(x),
\end{array}
\right.  \label{eq-hjb}%
\end{equation}
where
\begin{equation}%
\begin{array}
[c]{l}%
H(t,x,v,p,A,u)\\
=\frac{1}{2}\mathrm{tr}[\sigma\sigma^{\intercal}%
(t,x,v,V(t,x,v,p,u),u)A]+p^{\intercal}b(t,x,v,V(t,x,v,p,u),u)\\
\ \ +g(t,x,v,V(t,x,v,p,u),u),\\
V(t,x,v,p,u)=p^{\intercal}\sigma(t,x,v,V(t,x,v,p,u),u),\\
(t,x,v,p,A,u)\in\lbrack0,T]\times\mathbb{R}^{n}\times\mathbb{R}\times
\mathbb{R}^{n}\times\mathbb{S}^{n}\times U.
\end{array}
\label{def-G}%
\end{equation}

We first give the definition of viscosity solution (see
\cite{Crandall-lecture}).

\begin{definition}
(i) A real-valued continuous function $W(\cdot,\cdot)\in C\left(
[0,T]\times\mathbb{R}^{n}\right)  $ is called a viscosity subsolution (resp.
supersolution) to (\ref{eq-hjb}) if $W(T,x)\leq\phi(x)$ (resp. $W(T,x)\geq
\phi(x)$) for all $x\in\mathbb{R}^{n}$ and if for all $\varphi\in C_{b}%
^{2,3}\left(  [0,T]\times\mathbb{R}^{n}\right)  $ such that $W(t,x)=\varphi
(t,x)$ and $W-\varphi$ attains a local maximum (resp. minimum) at
$(t,x)\in\lbrack0,T)\times\mathbb{R}^{n}$, we have%
\[
\left\{
\begin{array}
[c]{l}%
\partial_{t}\varphi(t,x)+\inf\limits_{u\in U}H(t,x,\varphi(t,x),D\varphi
\left(  t,x\right)  ,D^{2}\varphi\left(  t,x\right)  ,u)\geq0\\
(\text{resp. }\varphi_{t}(t,x)+\inf\limits_{u\in U}H(t,x,\varphi
(t,x),D\varphi\left(  t,x\right)  ,D^{2}\varphi\left(  t,x\right)  ,u)\leq0).
\end{array}
\right.
\]

(ii) A real-valued continuous function $W(\cdot,\cdot)\in C\left(
[0,T]\times\mathbb{R}^{n}\right)  $ is called a viscosity solution to
(\ref{eq-hjb}), if it is both a viscosity subsolution and viscosity supersolution.
\end{definition}

In order to prove that $W(t,x)$ is a viscosity solution to the HJB equation
(\ref{eq-hjb}), we need the following assumption.

\begin{assumption}
\label{assum-l3} $L_{3}L_{W}<1$ and $8C_{4}L_{3}^{4}<1$, where $L_{W}%
=\sqrt{C_{2}}\left(  1-\sqrt{\Lambda}\right)  ^{-1}$ is the Lipschitz constant
of value function $W$ with respect to $x$, $C_{2}\,$and $C_{4}$ are defined in
Lemma \ref{sde-bsde}.
\end{assumption}

\begin{theorem}
\label{th-vis}Suppose Assumptions \ref{assum-1} and \ref{assum-l3} hold. Then
the value function $W(t,x)$ is the viscosity solution to the HJB equation
(\ref{eq-hjb}).
\end{theorem}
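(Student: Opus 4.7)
The plan is to extend Peng's backward-semigroup verification to the fully coupled setting, leveraging the DPP (Theorem \ref{th-ddp}), the continuity of $W$ (Lemmas \ref{le-w-new} and \ref{le-w-t}), and the FBSDE comparison theorem (Theorem \ref{th-comp}) already invoked in proving the DPP. The terminal condition $W(T,x)=\phi(x)$ is immediate from (\ref{state-eq}), since at $s=t=T$ the FBSDE degenerates to $Y_T^{T,x;u}=\phi(x)$ for every admissible $u$; hence both viscosity terminal inequalities hold with equality, and the remaining task is to verify the sub- and supersolution inequalities on $[0,T)\times\mathbb{R}^n$.

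Fix a test function $\varphi\in C_b^{2,3}([0,T]\times\mathbb{R}^n)$ and a point $(t,x)$ with $W(t,x)=\varphi(t,x)$. For the subsolution property, assume $W-\varphi$ attains a local maximum at $(t,x)$, so $W\leq\varphi$ in a ball around $(t,x)$. After a standard smooth truncation (modify $\varphi$ outside this ball, using the Lipschitz/linear growth of $W$ from Lemma \ref{le-w-new}, so that $W\leq\varphi$ globally without disturbing local data), fix a constant control $u_0\in U$ and combine the DPP with the FBSDE comparison to obtain
\[
\varphi(t,x)=W(t,x)\leq G_{t,t+\delta}^{t,x;u_0}\!\bigl[\varphi(t+\delta,\hat X^{u_0}_{t+\delta})\bigr]=\hat Y^{u_0}_t,
\]
where $(\hat X^{u_0},\hat Y^{u_0},\hat Z^{u_0})$ solves (\ref{eq-xy-til}) on $[t,t+\delta]$ with constant control $u_0$ and terminal function $\varphi(t+\delta,\cdot)$. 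I then apply It\^{o}'s formula to $\varphi(s,\hat X^{u_0}_s)$, substitute into the BSDE for $\hat Y^{u_0}$, and take expectations, eliminating the martingale parts:
\[
\hat Y^{u_0}_t-\varphi(t,x)=\mathbb{E}\!\left[\int_t^{t+\delta}\!\!\Bigl\{\partial_s\varphi+D\varphi^{\intercal} b+\tfrac{1}{2}\mathrm{tr}(\sigma\sigma^{\intercal} D^2\varphi)+g\Bigr\}\!\bigl(s,\hat X^{u_0}_s,\hat Y^{u_0}_s,\hat Z^{u_0}_s,u_0\bigr)\,ds\right].
\]

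Standard FBSDE a priori estimates give $\hat X^{u_0}_s\to x$ and $\hat Y^{u_0}_s\to\varphi(t,x)$ as $\delta\downarrow 0$. Matching the martingale parts of the semimartingales $\hat Y^{u_0}$ and $\varphi(\cdot,\hat X^{u_0})$ forces, in the same limit, the algebraic identity $\hat Z^{u_0}_s=D\varphi(s,\hat X^{u_0}_s)^{\intercal}\sigma(s,\hat X^{u_0}_s,\hat Y^{u_0}_s,\hat Z^{u_0}_s,u_0)$, so that $\hat Z^{u_0}_s\to V(t,x,\varphi(t,x),D\varphi(t,x),u_0)$; the well-posedness and Lipschitz dependence of this algebraic equation on its parameters is precisely what Assumption \ref{assum-l3} (through $L_3L_W<1$) ensures, via the contraction analysis of the preceding subsection. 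Dividing by $\delta$ and sending $\delta\downarrow 0$ gives
\[
0\leq\partial_t\varphi(t,x)+H\bigl(t,x,\varphi(t,x),D\varphi(t,x),D^2\varphi(t,x),u_0\bigr),
\]
and taking the infimum over $u_0\in U$ produces the subsolution inequality.

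The supersolution case is symmetric: $W-\varphi$ has a local minimum, so $W\geq\varphi$ nearby, and for any $\varepsilon>0$ the DPP provides $u^{\delta,\varepsilon}\in\mathcal{U}^t[t,t+\delta]$ with $G_{t,t+\delta}^{t,x;u^{\delta,\varepsilon}}[W(t+\delta,\cdot)]\leq W(t,x)+\varepsilon\delta$. Comparison plus the same It\^{o} expansion (now bounded from above) yields
\[
\mathbb{E}\!\left[\int_t^{t+\delta}\!\bigl\{\partial_s\varphi+H(s,\hat X_s,\varphi,D\varphi,D^2\varphi,u^{\delta,\varepsilon}_s)+o(1)\bigr\}\,ds\right]\leq\varepsilon\delta,
\]
and bounding the integrand below by $\partial_s\varphi+\inf_{u\in U}H(s,\cdot,u)$, dividing by $\delta$ and letting $\delta,\varepsilon\downarrow 0$ completes the proof. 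The main obstacle is the joint passage to the limit: the forward perturbation of $\hat X^{u_0}$, the backward error for $\hat Y^{u_0}$, and the implicit equation identifying $\hat Z^{u_0}_t$ with $V(t,x,\varphi(t,x),D\varphi(t,x),u_0)$ must all collapse consistently, and it is precisely the contraction estimates $L_3 L_W<1$ and $8C_4 L_3^4<1$ in Assumption \ref{assum-l3} that make these three limits coherent and plug the correct value of $V$ into $H$.
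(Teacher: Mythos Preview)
Your overall architecture (DPP plus It\^o expansion of $\varphi(s,\hat X_s)$) matches the paper, but the step where you ``match the martingale parts'' to force $\hat Z^{u_0}_s=D\varphi(s,\hat X^{u_0}_s)^{\intercal}\sigma(s,\hat X^{u_0}_s,\hat Y^{u_0}_s,\hat Z^{u_0}_s,u_0)$ is not correct. For any $\delta>0$ the processes $\hat Y^{u_0}$ and $\varphi(\cdot,\hat X^{u_0})$ are genuinely different semimartingales on $[t,t+\delta]$ (they share only the terminal value), so their diffusion coefficients do \emph{not} coincide; the difference $Z^{1}_s:=\hat Z^{u_0}_s-D\varphi(s,\hat X^{u_0}_s)^{\intercal}\sigma(s,\hat X^{u_0}_s,\hat Y^{u_0}_s,\hat Z^{u_0}_s,u_0)$ is a nontrivial process. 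What you actually need, in order to divide by $\delta$ and pass to the limit, is a quantitative bound of the form $\mathbb{E}\int_t^{t+\delta}|Z^{1}_s|\,ds=o(\delta)$, and nothing you have written produces that.

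The paper closes this gap by introducing $(Y^{1,u},Z^{1,u})$ as the solution of an auxiliary BSDE (\ref{eq-11112}) whose generator $F_1$ is built from the implicit function $h$ solving (\ref{eq-111121}); Lemma \ref{le-1app} shows that $(Y^{1,u},Z^{1,u})=(\hat Y^u-\varphi(\cdot,\hat X^u),\,Z^{1})$ exactly. Then Lemma \ref{le-129-1} compares this BSDE with a second one (\ref{eq-new-12345}) having frozen arguments and proves a $C\delta^{3/2}$ rate. Obtaining that rate requires the $L^4$ estimate (\ref{new-new-23452}) on the FBSDE, which is precisely where the hypothesis $8C_4L_3^4<1$ from Assumption \ref{assum-l3} is used; in particular it yields $\mathbb{E}\int_t^{t+\delta}|Z^{1,v}_s|^2ds\le C\delta^2$ and hence $\mathbb{E}\int_t^{t+\delta}|Z^{1,v}_s|\,ds\le C\delta^{3/2}$. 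Without this auxiliary construction and the accompanying $L^4$ bound, your limit argument for $\hat Z^{u_0}_s\to V(t,x,\varphi,D\varphi,u_0)$ is unjustified, and the identification of the generator $H$ in (\ref{def-G}) (which is defined through $V$, not through an arbitrary $\hat Z$) does not go through.
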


\textbf{Proof. }Obviously, $W\left(  T,x\right)  =\phi(x)$, $x\in
\mathbb{R}^{n}$. By Lemmas \ref{le-w-new} and \ref{le-w-t}, we know that
$W(\cdot,\cdot)\in C\left(  [0,T]\times\mathbb{R}^{n}\right)  $. We first
prove that $W$ is a viscosity subsolution. For each given $(t,x_{0})\in
\lbrack0,T)\times\mathbb{R}^{n}$, suppose $\varphi\left(  \cdot\right)  \in
C_{b}^{2,3}\left(  \left[  0,T\right]  \times\mathbb{R}^{n}\right)  $ such
that $\varphi(t,x_{0})=W(t,x_{0})$, $\varphi\geq W$ on $[0,T]\times
\mathbb{R}^{n}$ and%
\begin{equation}
L_{3}\sup_{(s,x)\in\lbrack0,T]\times\mathbb{R}^{n}}|D\varphi(s,x)|<1.
\label{new-new12345}%
\end{equation}
Consider the following FBSDE and BSDE: $\forall s\in\lbrack t,t+\delta
]\subset\lbrack0,T]$,%
\begin{equation}
\left\{
\begin{array}
[c]{rl}%
dX_{s}^{u}= & b(s,X_{s}^{u},Y_{s}^{u},Z_{s}^{u},u_{s})ds+\sigma(s,X_{s}%
^{u},Y_{s}^{u},Z_{s}^{u},u_{s})dB_{s},\\
dY_{s}^{u}= & -g(s,X_{s}^{u},Y_{s}^{u},Z_{s}^{u},u_{s})ds+Z_{s}^{u}dB_{s},\\
X_{t}^{u}= & x_{0},\ Y_{t+\delta}^{u}=\varphi(t+\delta,X_{t+\delta}^{u}),
\end{array}
\right.  \label{eq-11111}%
\end{equation}%
\begin{equation}
dY_{s}^{1,u}=-F_{1}(s,X_{s}^{u},Y_{s}^{1,u},Z_{s}^{1,u},u_{s})ds+Z_{s}%
^{1,u}dB_{s},\ Y_{t+\delta}^{1,u}=0, \label{eq-11112}%
\end{equation}
where
\[%
\begin{array}
[c]{rl}%
F_{1}\left(  s,x,y,z,u\right)  = & \partial_{t}\varphi\left(  s,x\right)
+\left(  D\varphi\left(  s,x\right)  \right)  ^{\intercal}b\left(
s,x,y+\varphi\left(  s,x\right)  ,h(s,x,y,z,u),u\right) \\
& +\frac{1}{2}\mathrm{tr}\left[  \sigma\sigma^{\intercal}\left(
s,x,y+\varphi\left(  s,x\right)  ,h(s,x,y,z,u),u\right)  D^{2}\varphi\left(
s,x\right)  \right] \\
& +g\left(  s,x,y+\varphi\left(  s,x\right)  ,h(s,x,y,z,u),u\right)  ,
\end{array}
\]%
\begin{equation}
h(s,x,y,z,u){=z+{D\varphi(s,x)}^{\intercal}\sigma}\left(  s,x,y+\varphi
(s,x\right)  ,h(s,x,y,z,u),u). \label{eq-111121}%
\end{equation}

\begin{remark}
For $\varphi\left(  \cdot\right)  \in C_{b}^{2,3}\left(  \left[  0,T\right]
\times\mathbb{R}^{n}\right)  $ such that $\varphi(t,x_{0})=W(t,x_{0})$ and
$\varphi\geq W$, we have%
\[
\varphi(t,x)-\varphi(t,x_{0})={{D\varphi(t,x_{0})}^{\intercal}(x-x}%
_{0})+o(|x-x_{0}|)
\]
and%
\[
\varphi(t,x)-\varphi(t,x_{0})\geq W(t,x)-W(t,x_{0})\geq-L_{W}|x-x_{0}|.
\]
Taking $x\rightarrow x_{0}$ such that ${{D\varphi(t,x_{0})}^{\intercal}%
(x-x}_{0})=-|{{D\varphi(t,x_{0})||}x-x}_{0}|$, we get $|{{D\varphi
(t,x_{0})|\leq}}L_{W}$. From the definition of viscosity solution and
$L_{3}L_{W}<1$, we can assume (\ref{new-new12345}) holds without loss of generality.
\end{remark}

We first prove the following lemmas.

\begin{lemma}
\label{le-h}Suppose Assumptions \ref{assum-1} and \ref{assum-l3} hold. Then
there exists a unique function $h(s,x,y,z,u)$ satisfying (\ref{eq-111121}) for
each $s\in\lbrack0,T]$, $x$, $\bar{x}\in\mathbb{R}^{n}$, $y\in\mathbb{R}$,
$z\in\mathbb{R}^{1\times d}$ and $u\in U$. Furthermore, for each given
$s\in\lbrack0,T]$, $x\in\mathbb{R}^{n}$, $y$, $\bar{y}\in\mathbb{R}$, $z$,
$\bar{z}\in\mathbb{R}^{1\times d}$ and $u\in U$,
\begin{equation}%
\begin{array}
[c]{l}%
|h(s,x,y,z,u)|\leq C(1+|x|+|y|+|z|),\\
|h(s,x,y,z,u)-h(s,\bar{x},\bar{y},\bar{z},u)|\leq C[\left(  1+\left\vert
x\right\vert +\left\vert y\right\vert +\left\vert z\right\vert \right)
\left\vert x-\bar{x}\right\vert +|y-\bar{y}|+|z-\bar{z}|],
\end{array}
\label{new-new-new-1}%
\end{equation}
and $h(\cdot)$ is continuous with respect to $s$, $x$, $y$, $z$, $u$.
\end{lemma}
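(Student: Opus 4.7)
The plan is to treat (\ref{eq-111121}) as a Banach fixed-point problem in $\zeta\in\mathbb{R}^{1\times d}$. For each fixed $(s,x,y,z,u)$, define
$\Phi(\zeta):=z+(D\varphi(s,x))^{\intercal}\sigma(s,x,y+\varphi(s,x),\zeta,u)$.
By the $L_{3}$-Lipschitz property of $\sigma$ in its $z$-argument (Assumption \ref{assum-1}),
$$
|\Phi(\zeta)-\Phi(\zeta')|\le L_{3}\,|D\varphi(s,x)|\,|\zeta-\zeta'|\le \kappa\,|\zeta-\zeta'|,\qquad \kappa:=L_{3}\sup_{(s,x)}|D\varphi(s,x)|<1,
$$
where the strict inequality is precisely (\ref{new-new12345}). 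Banach's contraction theorem then supplies a unique fixed point, which we denote $h(s,x,y,z,u)$; uniqueness across the state variables is automatic since the fixed point of $\Phi$ is unique for each $(s,x,y,z,u)$.

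For the linear growth, I would plug $h$ back into (\ref{eq-111121}) and invoke Remark \ref{linear growth} together with the boundedness of $\varphi$ (from $\varphi\in C_{b}^{2,3}$): this gives
$|h|\le|z|+|D\varphi(s,x)|\cdot L(1+|x|+|y|+\|\varphi\|_{\infty}+|h|)$,
so that $(1-\kappa)|h|\le C(1+|x|+|y|+|z|)$ and the stated bound follows immediately.

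For the Lipschitz-type estimate, I would write $h-\bar h$ as
$$
(z-\bar z)+\bigl(D\varphi(s,x)\bigr)^{\intercal}\!\bigl[\sigma(s,x,y{+}\varphi(s,x),h,u)-\sigma(s,\bar x,\bar y{+}\varphi(s,\bar x),\bar h,u)\bigr]+\bigl(D\varphi(s,x)-D\varphi(s,\bar x)\bigr)^{\intercal}\!\sigma(s,\bar x,\bar y{+}\varphi(s,\bar x),\bar h,u)
$$
and estimate term-by-term. The Lipschitz bounds on $\sigma$ produce a contraction term $\kappa|h-\bar h|$ plus $C(|x-\bar x|+|y-\bar y|+|z-\bar z|)$ (using also $|\varphi(s,x)-\varphi(s,\bar x)|\le\|D\varphi\|_{\infty}|x-\bar x|$); boundedness of $D^{2}\varphi$ gives Lipschitz continuity of $D\varphi$ in $x$; and the linear growth of $\sigma$ combined with the already-established linear growth of $\bar h$ bounds the residual $\sigma$-factor by $C(1+|\bar x|+|\bar y|+|\bar z|)|x-\bar x|$. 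Rearranging by $(1-\kappa)^{-1}$ and passing from barred to unbarred variables via $|\bar x|\le|x|+|x-\bar x|$ (splitting into $|x-\bar x|\le 1$ vs.\ $|x-\bar x|>1$ so that any quadratic-in-$|x-\bar x|$ residues are absorbed using the linear-growth bound for $h$ and $\bar h$) produces the stated asymmetric form $C[(1+|x|+|y|+|z|)|x-\bar x|+|y-\bar y|+|z-\bar z|]$.

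Continuity of $h$ in $(s,u)$ is handled by the same subtraction trick: fix $(x,y,z)$, take $(s_{n},u_{n})\to(s,u)$, subtract the fixed-point equations, and observe that $(1-\kappa)|h_{n}-h|$ is bounded by terms involving $|\sigma(s_{n},\cdot,u_{n})-\sigma(s,\cdot,u)|$ and $|D\varphi(s_{n},x)-D\varphi(s,x)|$, each vanishing by joint continuity of $\sigma$, $\varphi$ and $D\varphi$; joint continuity of $h$ in all of $(s,x,y,z,u)$ follows by combining this with the already-proved Lipschitz dependence on $(x,y,z)$. The main obstacle I anticipate is purely bookkeeping in the Lipschitz step: matching the exactly asymmetric right-hand side $(1+|x|+|y|+|z|)|x-\bar x|$ requires care with the split on $|x-\bar x|$, while every other piece of the argument is routine contraction-mapping machinery driven by $\kappa<1$.
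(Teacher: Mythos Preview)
Your proposal is correct and follows essentially the same contraction-mapping argument as the paper: define $\Phi$ (the paper calls it $\Gamma$), use $L_3\sup|D\varphi|<1$ from (\ref{new-new12345}) to get a unique fixed point, then establish growth, Lipschitz, and continuity by subtracting fixed-point identities and rearranging via $(1-\kappa)^{-1}$. The only cosmetic difference is in the Lipschitz step: the paper splits the product as $[D\varphi(s,x)-D\varphi(s,\bar x)]^{\intercal}\sigma(s,x,\ldots,h,\ldots)+D\varphi(s,\bar x)^{\intercal}[\sigma(s,x,\ldots)-\sigma(s,\bar x,\ldots)]$, so the linear-growth factor $(1+|x|+|y|+|z|)$ appears directly in unbarred variables and no barred-to-unbarred conversion or $|x-\bar x|\lessgtr 1$ split is needed---you can simplify your bookkeeping by adopting that orientation.
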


\begin{proof}
For each given $s\in\lbrack0,T]$, $x\in\mathbb{R}^{n}$, $y\in\mathbb{R}$,
$z\in\mathbb{R}^{1\times d}$ and $u\in U$, we define a mapping $\Gamma
:\mathbb{R}^{1\times d}\rightarrow\mathbb{R}^{1\times d}$ as follows%
\[
\Gamma z^{\prime}={z+{D\varphi(s,x)}^{\intercal}\sigma}\left(  s,x,y+\varphi
(s,x\right)  ,z^{\prime},u)\text{ for }z^{\prime}\in\mathbb{R}^{1\times d}.
\]
For each $z_{1}$, $z_{2}\in\mathbb{R}^{1\times d}$, we have
\[%
\begin{array}
[c]{rl}
& \left\vert \Gamma z_{1}-\Gamma z_{2}\right\vert \\
& =\left\vert {\sigma}\left(  s,x,y+\varphi(s,x\right)  ,z_{1},u)^{\intercal
}{D\varphi(s,x)}-{\sigma}\left(  s,x,y+\varphi(s,x\right)  ,z_{2}%
,u)^{\intercal}{D\varphi(s,x)}\right\vert \\
& \leq L_{3}\sup_{(t,x)\in\lbrack0,T]\times\mathbb{R}^{n}}|D\varphi
(t,x)|\left\vert z_{1}-z_{2}\right\vert ,
\end{array}
\]
which implies that $\Gamma$ is a contraction mapping. Thus there exists a
unique $z^{\prime}\in\mathbb{R}^{1\times d}$ such that $\Gamma z^{\prime}%
={z}^{\prime}$. Define $h(s,x,y,z,u)=z^{\prime}$, then $h\left(  \cdot\right)
$ satisfies (\ref{eq-111121}).

For each $s\in\lbrack0,T]$, $x$, $\bar{x}\in\mathbb{R}^{n}$, $y$, $\bar{y}%
\in\mathbb{R}$, $z$, $\bar{z}\in\mathbb{R}^{1\times d}$ and $u\in U$,%
\[%
\begin{array}
[c]{l}%
|h(s,x,y,z,u)|\\
\leq|z|+L_{3}||D\varphi||_{\infty}|h(s,x,y,z,u)|+||D\varphi||_{\infty}%
|{\sigma}\left(  s,x,y+\varphi(s,x\right)  ,0,u)|\\
\leq L_{3}||D\varphi||_{\infty}|h(s,x,y,z,u)|+C(1+|x|+|y|+|z|),
\end{array}
\]%
\[%
\begin{array}
[c]{l}%
|h(s,x,y,z,u)-h(s,\bar{x},\bar{y},\bar{z},u)|\\
\leq|z-\bar{z}|+\left\vert D\varphi(s,x)-D\varphi(s,\bar{x})\right\vert
\left\vert {\sigma}\left(  s,x,y+\varphi(s,x\right)
,h(s,x,y,z,u),u)\right\vert \\
\ +||D\varphi||_{\infty}\left\vert {\sigma}\left(  s,x,y+\varphi(s,x\right)
,h(s,x,y,z,u),u)\right. \\
\text{\ \ \ \ \ \ \ \ \ \ \ \ \ \ \ }\left.  -{\sigma}\left(  s,\bar{x}%
,\bar{y}+\varphi(s,\bar{x}\right)  ,h(s,\bar{x},\bar{y},\bar{z}%
,u),u)\right\vert \\
\leq|z-\bar{z}|+||D\varphi||_{\infty}\{L_{3}|h(s,x,y,z,u)-h(s,\bar{x},\bar
{y},\bar{z},u)|+C\left(  \left\vert x-\bar{x}\right\vert +|y-\bar{y}|\right)
\}\\
\ +C\left\vert x-\bar{x}\right\vert (1+|x|+|y|+|h\left(  s,x,y,z,u\right)  |),
\end{array}
\]
which implies (\ref{new-new-new-1}). Now we prove that $h(\cdot)$ is
continuous. For $(s_{m},x_{m},y_{m},z_{m},u_{m})$ $\rightarrow(s,x,y,z,u)$,%
\[%
\begin{array}
[c]{l}%
|h(s_{m},x_{m},y_{m},z_{m},u_{m})-h(s,x,y,z,u)|\\
\leq|z_{m}-z|+L_{3}||D\varphi||_{\infty}|h(s_{m},x_{m},y_{m},z_{m}%
,u_{m})-h(s,x,y,z,u)|\\
\ \ +\left\vert {D\varphi(s}_{m}{,x}_{m}{)}^{\intercal}{\sigma}\left(
s_{m},x_{m},y_{m}+\varphi(s_{m},x_{m}\right)  ,h(s,x,y,z,u),u_{m})\right. \\
\ \ \ \ \ \left.  -{D\varphi(s,x)}^{\intercal}{\sigma}\left(  s,x,y+\varphi
(s,x\right)  ,h(s,x,y,z,u),u)\right\vert ,
\end{array}
\]
which implies that $h(\cdot)$ is continuous with respect to $s$, $x$, $y$,
$z$, $u$.
\end{proof}

\begin{lemma}
\label{le-1app} For each $s\in\lbrack t,t+\delta]$, we have%
\begin{equation}%
\begin{array}
[c]{l}%
Y_{s}^{1,u}=Y_{s}^{u}-\varphi\left(  s,X_{s}^{u}\right)  ,\ Z_{s}^{1,u}%
{=Z_{s}^{u}-{D\varphi\left(  s,X_{s}^{u}\right)  }^{\intercal}\sigma\left(
s,X_{s}^{u},Y_{s}^{u},Z_{s}^{u},u_{s}\right)  .}%
\end{array}
\label{eq-11113}%
\end{equation}

\end{lemma}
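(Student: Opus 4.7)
The plan is to apply It\^o's formula to $\varphi(s,X_{s}^{u})$ and show that the difference
\[
\tilde{Y}_{s}:=Y_{s}^{u}-\varphi(s,X_{s}^{u}),\qquad \tilde{Z}_{s}:=Z_{s}^{u}-D\varphi(s,X_{s}^{u})^{\intercal}\sigma(s,X_{s}^{u},Y_{s}^{u},Z_{s}^{u},u_{s})
\]
solves the BSDE (\ref{eq-11112}). Since the driver $F_{1}$ is Lipschitz in $(y,z)$ by Lemma \ref{le-h} and Assumption \ref{assum-1}, uniqueness of BSDE solutions will then force $(Y^{1,u},Z^{1,u})=(\tilde{Y},\tilde{Z})$, which is precisely the content of (\ref{eq-11113}).

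First, using that $X^{u}$ satisfies the forward SDE in (\ref{eq-11111}) and $\varphi\in C_{b}^{2,3}$, It\^o's formula gives
\[
d\varphi(s,X_{s}^{u})=\bigl[\partial_{t}\varphi+D\varphi^{\intercal}b+\tfrac{1}{2}\mathrm{tr}(\sigma\sigma^{\intercal}D^{2}\varphi)\bigr]\,ds+D\varphi(s,X_{s}^{u})^{\intercal}\sigma(s,X_{s}^{u},Y_{s}^{u},Z_{s}^{u},u_{s})\,dB_{s},
\]
with $b,\sigma$ evaluated at $(s,X_{s}^{u},Y_{s}^{u},Z_{s}^{u},u_{s})$. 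Subtracting from the $Y^{u}$-equation in (\ref{eq-11111}) produces $\tilde{Z}_{s}$ as the diffusion coefficient, and the terminal condition $Y_{t+\delta}^{u}=\varphi(t+\delta,X_{t+\delta}^{u})$ yields $\tilde{Y}_{t+\delta}=0$, matching (\ref{eq-11112}).

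The key step is to recast the resulting drift of $\tilde{Y}$, namely
\[
-\bigl[\,g(s,X_{s}^{u},Y_{s}^{u},Z_{s}^{u},u_{s})+\partial_{t}\varphi+D\varphi^{\intercal}b(s,X_{s}^{u},Y_{s}^{u},Z_{s}^{u},u_{s})+\tfrac{1}{2}\mathrm{tr}(\sigma\sigma^{\intercal}D^{2}\varphi)\bigr],
\]
in the form $-F_{1}(s,X_{s}^{u},\tilde{Y}_{s},\tilde{Z}_{s},u_{s})$. Since $\tilde{Y}_{s}+\varphi(s,X_{s}^{u})=Y_{s}^{u}$, only the $Z$-slot of $b,\sigma,g$ requires attention: I must verify that $h(s,X_{s}^{u},\tilde{Y}_{s},\tilde{Z}_{s},u_{s})=Z_{s}^{u}$. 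By the uniqueness part of Lemma \ref{le-h}, it suffices to check that $Z_{s}^{u}$ satisfies the fixed-point equation (\ref{eq-111121}) with $(y,z)=(\tilde{Y}_{s},\tilde{Z}_{s})$, i.e.
\[
Z_{s}^{u}\stackrel{?}{=}\tilde{Z}_{s}+D\varphi(s,X_{s}^{u})^{\intercal}\sigma(s,X_{s}^{u},Y_{s}^{u},Z_{s}^{u},u_{s}),
\]
which is an immediate consequence of the definition of $\tilde{Z}_{s}$. With this identification, every coefficient in the drift of $\tilde{Y}$ matches $F_{1}$ evaluated at $(s,X_{s}^{u},\tilde{Y}_{s},\tilde{Z}_{s},u_{s})$.

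The only subtle point is recognising that the algebraic fixed-point equation defining $h$ was set up precisely so that this substitution works, turning the computation into a one-line verification; the smallness (\ref{new-new12345}) is already used at the level of Lemma \ref{le-h}, so no further assumption is required. Beyond this the argument is a standard combination of It\^o's formula with BSDE uniqueness, and the identities (\ref{eq-11113}) follow at once.
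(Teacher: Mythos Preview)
Your proposal is correct and takes exactly the same approach as the paper, which simply writes ``Applying It\^o's formula to $Y_{s}^{u}-\varphi(s,X_{s}^{u})$, we can obtain the desired result.'' You have merely unpacked this one-line proof: the It\^o expansion, the identification of the martingale part as $\tilde{Z}_{s}$, and the observation that $Z_{s}^{u}$ solves the fixed-point equation (\ref{eq-111121}) so that $h(s,X_{s}^{u},\tilde{Y}_{s},\tilde{Z}_{s},u_{s})=Z_{s}^{u}$, whence the drift matches $-F_{1}$.
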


\begin{proof}
Applying It\^{o}'s formula to $Y_{s}^{u}-\varphi\left(  s,X_{s}^{u}\right)  $,
we can obtain the desired result.
\end{proof}

Under Assumption \ref{assum-l3}, we can choose a $\delta_{0}>0$ such that
\[
8C_{4}\left[  L_{2}^{4}\left(  \delta_{0}^{2}+\delta_{0}^{4}\right)
+L_{3}^{4}\right]  <1.
\]
By Theorem \ref{th-lp} in Appendix, then for each $\delta<\delta_{0}$, we have%
\begin{equation}%
\begin{array}
[c]{l}%
\mathbb{E}\left[  \sup\limits_{t\leq s\leq t+\delta}\left(  |X_{s}^{u}%
|^{4}+|Y_{s}^{u}|^{4}\right)  +\left(  \displaystyle{\int}_{t}^{t+\delta
}|Z_{s}^{u}|^{2}ds\right)  ^{2}\right] \\
\leq C\left\{  |x_{0}|^{4}+\mathbb{E}\left[  \left(  \displaystyle{\int}%
_{t}^{t+\delta}(|b(s,0,0,0,u_{s})|+|g(s,0,0,0,u_{s})|)ds\right)  ^{4}\right.
\right. \\
\ \ \ \ \left.  \left.  +\left(  \displaystyle{\int}_{t}^{t+\delta}%
|\sigma(s,0,0,0,u_{s})|^{2}ds\right)  ^{2}\right]  \right\} \\
\leq C\left(  1+\left\vert x_{0}\right\vert ^{4}\right)  ,
\end{array}
\label{new-new-23452}%
\end{equation}
where $C$ is a constant independent of $u$ and $\delta$. Consider the
following BSDE: $\forall s\in\lbrack t,t+\delta]$,%
\begin{equation}
dY_{s}^{2,u}=-F_{1}(s,x_{0},0,0,u_{s})ds+Z_{s}^{2,u}dB_{s},\ Y_{t+\delta
}^{2,u}=0. \label{eq-new-12345}%
\end{equation}

We have the following estimate.

\begin{lemma}
\label{le-129-1} For each given $v\in\mathcal{U}^{t}[t,t+\delta]$, \ we have
%\begin{equation}
$\vert Y_{t}^{1,v}-Y_{t}^{2,v}\vert\leq C\delta^{\frac{3}{2}},$
%\label{eq-y1-y2}%
%\end{equation}
where $C$ is a positive constant depend on $x$ and independent of $v$,
$\delta$.
\end{lemma}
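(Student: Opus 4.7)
My plan is to use the representation
\[
Y_t^{1,v} - Y_t^{2,v} = \mathbb{E}\!\int_t^{t+\delta}\!\left[F_1(s, X_s^v, Y_s^{1,v}, Z_s^{1,v}, v_s) - F_1(s, x_0, 0, 0, v_s)\right]\,ds,
\]
valid because $Y_t^{1,v}$ and $Y_t^{2,v}$ are deterministic for $v \in \mathcal{U}^t[t,t+\delta]$. Splitting the integrand by triangle inequality into an ``$x$-change'' piece (at fixed $(y,z)=(Y^{1,v}_s,Z^{1,v}_s)$) and a ``$(y,z)$-change'' piece (at fixed $x=x_0$), and invoking the smoothness of $\varphi\in C_b^{2,3}$, Assumption \ref{assum-1}, and the regularity of $h$ from Lemma \ref{le-h}, I obtain the pointwise bound
\[
\bigl|F_1(s, X, Y, Z, v) - F_1(s, x_0, 0, 0, v)\bigr| \leq C(1+|X|)|X - x_0| + C(|Y| + |Z|).
\]

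The required moment estimates are: (i) the $L^4$-bounds on $(X^v,Y^v,Z^v)$ from (\ref{new-new-23452}), which are available under Assumption \ref{assum-l3}, together with standard SDE estimates for time-integrated moments of $|X^v - x_0|$; (ii) for the frozen BSDE (\ref{eq-new-12345}), whose driver $F_1(s,x_0,0,0,v_s)$ is uniformly bounded in $v_s$ (because $\varphi \in C_b^{2,3}$ and $h(s,x_0,0,0,v_s)$ is bounded by Lemma \ref{le-h}), one has $|Y_s^{2,v}| \leq C\delta$ pointwise and, via It\^o on $|Y^{2,v}|^2$ and BDG, $\mathbb{E}\bigl(\int_t^{t+\delta}|Z_s^{2,v}|^2\,ds\bigr)^{\!2} \leq C\delta^4$; analogous (weaker) bounds for $(Y^{1,v},Z^{1,v})$ follow from BSDE theory applied to (\ref{eq-11112}).

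Combining these via repeated Cauchy-Schwarz, the $(y,z)$-change contribution, bounded above by $\mathbb{E}\int(|Y^{1,v}|+|Z^{1,v}|)\,ds$ after using the pointwise and $\sqrt{L^2}$-in-time bounds, is $O(\delta^{3/2})$; the $x$-change contribution is bounded using the $L^4$-estimate for $X^v$ combined with the time-integrated second-moment estimate for $|X^v - x_0|$, together with stochastic Fubini or martingale arguments to kill the leading order of the stochastic-integral part of $X^v - x_0$, again yielding $O(\delta^{3/2})$. Summing gives $|Y_t^{1,v} - Y_t^{2,v}| \leq C\delta^{3/2}$, with $C$ depending on $x_0$ but not on $v$ or $\delta$.

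The main technical obstacle is the careful bookkeeping of $\delta$-powers for the $x$-change piece: since the coupling with $Z^v$ prevents the naive estimate $\mathbb{E}\sup_{[t,t+\delta]}|X^v_s - x_0|^2 = O(\delta)$, a direct bound gives only the trivial $O(\delta)$, and the improvement to $O(\delta^{3/2})$ requires simultaneously exploiting the $L^4$-theory for the forward SDE (justified by Assumption \ref{assum-l3}), the pointwise $|Y^{2,v}| \leq C\delta$, and the $O(\delta^4)$ bound on $\mathbb{E}(\int|Z^{2,v}|^2\,ds)^2$, as well as the martingale structure of the stochastic integral in $X^v - x_0$. Without this combination the sharper order is out of reach.
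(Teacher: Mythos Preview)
Your approach has two genuine gaps.

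First, the pointwise bound
\[
|F_1(s,X,Y,Z,v)-F_1(s,x_0,0,0,v)|\leq C(1+|X|)|X-x_0|+C(|Y|+|Z|)
\]
is not correct. The generator $F_1$ contains the term $\tfrac12\mathrm{tr}[\sigma\sigma^{\intercal}D^2\varphi]$, which is quadratic in $\sigma$; since $\sigma$ has linear growth in $(y,z)$ (through $h$, see Lemma~\ref{le-h}), this produces quadratic contributions in $(|Y|,|Z|)$. The paper's bound therefore reads
\[
\hat F_s\leq C\bigl(|X^v_s-x_0|+|Y^{1,v}_s|+|Z^{1,v}_s|+|X^v_s-x_0|^2+|Y^{1,v}_s|^2+|Z^{1,v}_s|^2\bigr),
\]
and the squares cannot be dropped.

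Second, and more seriously, you have not supplied the mechanism that delivers the $\delta$-smallness you yourself flag as the main obstacle. The paper's key step is to observe that $(\tilde X^v,\tilde Y^v,\tilde Z^v):=(X^v-x_0,\,Y^{1,v},\,Z^v)$ solves a fully coupled FBSDE on $[t,t+\delta]$ with \emph{zero} initial datum and \emph{zero} terminal datum, and to apply the $L^p$ FBSDE estimate (Theorem~\ref{th-lp}) to that shifted system. This yields directly
\[
\mathbb{E}\Bigl[\sup_{t\leq s\leq t+\delta}|X^v_s-x_0|^p+\Bigl(\int_t^{t+\delta}|Z^v_s|^2\,ds\Bigr)^{p/2}\Bigr]\leq C\delta^{p/2},\qquad p\in[2,4],
\]
and from this both $\mathbb{E}\int|X^v_s-x_0|\,ds\leq C\delta^{3/2}$ and $\mathbb{E}\int|Z^{1,v}_s|^2\,ds\leq C\delta^2$ follow. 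Your proposal replaces this with ``martingale arguments'' on the stochastic integral in $X^v-x_0$ together with bounds on $(Y^{2,v},Z^{2,v})$. But $(Y^{2,v},Z^{2,v})$ come from the \emph{frozen} BSDE (\ref{eq-new-12345}) and carry no information about $X^v-x_0$ or $Z^v$; and the martingale property of $\int\sigma\,dB$ does not help with $\mathbb{E}\int|X^v_s-x_0|\,ds$ because the coupling $\sigma=\sigma(\cdot,Z^v)$ means $\mathbb{E}\int_t^{t+\delta}|\sigma|^2\,ds$ contains $\mathbb{E}\int_t^{t+\delta}|Z^v_s|^2\,ds$, which is only $O(1)$ from (\ref{new-new-23452}). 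Without the shifted-FBSDE estimate there is no way to close the loop and obtain $\mathbb{E}\int|Z^v_s|^2\,ds=O(\delta)$, so your argument stalls at $O(\delta)$ rather than $O(\delta^{3/2})$.
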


\begin{proof}
Since%
\begin{equation}%
\begin{array}
[c]{l}%
Y_{t}^{1,v}=\mathbb{E}\left[  \int_{t}^{t+\delta}F_{1}(s,X_{s}^{v},Y_{s}%
^{1,v},Z_{s}^{1,v},v_{s})ds\right]  ,\\
Y_{t}^{2,v}=\mathbb{E}\left[  \int_{t}^{t+\delta}F_{1}(s,x_{0},0,0,v_{s}%
)ds\right]  ,
\end{array}
\label{ne-new-2}%
\end{equation}
we obtain%
\[
\left\vert Y_{t}^{1,v}-Y_{t}^{2,v}\right\vert \leq\mathbb{E}\left[  \int%
_{t}^{t+\delta}\hat{F}_{s}ds\right]  ,
\]
where
\[
\hat{F}_{s}=\left\vert F_{1}(s,X_{s}^{v},Y_{s}^{1,v},Z_{s}^{1,v},v_{s}%
)-F_{1}(s,x_{0},0,0,v_{s})\right\vert .
\]
Note that $\varphi\in C_{b}^{2,3}\left(  [0,T]\times\mathbb{R}^{n}\right)  $.
By Lemmas \ref{le-h} and \ref{le-1app}, it is easy to check that
\[
\hat{F}_{s}\leq C\left(  \left\vert X_{s}^{v}-x_{0}\right\vert +|Y_{s}%
^{1,v}|+|Z_{s}^{1,v}|+\left\vert X_{s}^{v}-x_{0}\right\vert ^{2}+|Y_{s}%
^{1,v}|^{2}+|Z_{s}^{1,v}|^{2}\right)  .
\]
Set $\tilde{X}_{s}^{v}=X_{s}^{v}-x_{0}$, $\tilde{Y}_{s}^{v}=Y_{s}^{1,v}$,
$\tilde{Z}_{s}^{v}=Z_{s}^{v}$. Then $(\tilde{X}_{s}^{v},\tilde{Y}_{s}%
^{v},\tilde{Z}_{s}^{v})$ satisfies the following FBSDE:%
\[
\left\{
\begin{array}
[c]{rl}%
d\tilde{X}_{s}^{v}= & b(s,\tilde{X}_{s}^{v}+x_{0},\tilde{Y}_{s}^{v}%
+\varphi\left(  s,X_{s}^{v}\right)  ,\tilde{Z}_{s}^{v},v_{s})ds\\
& +\sigma(s,\tilde{X}_{s}^{v}+x_{0},\tilde{Y}_{s}^{v}+\varphi\left(
s,X_{s}^{v}\right)  ,\tilde{Z}_{s}^{v},v_{s})dB_{s},\\
d\tilde{Y}_{s}^{v}= & -g(s,\tilde{X}_{s}^{v}+x_{0},\tilde{Y}_{s}^{v}%
+\varphi\left(  s,X_{s}^{v}\right)  ,\tilde{Z}_{s}^{v},v_{s})ds+\tilde{Z}%
_{s}^{v}dB_{s},\\
\tilde{X}_{t}^{v}= & 0,\ \tilde{Y}_{t+\delta}^{v}=0.
\end{array}
\right.
\]
By (\ref{new-new-23452}) and Theorem \ref{th-lp} in Appendix, we have%
\begin{equation}%
\begin{array}
[c]{rl}
& \mathbb{E}\left[  \sup\limits_{t\leq s\leq t+\delta}\left(  |\tilde{X}%
_{s}^{v}|^{p}+|\tilde{Y}_{s}^{v}|^{p}\right)  +\left(  {\int}_{t}^{t+\delta
}|Z_{s}^{v}|^{2}ds\right)  ^{\frac{p}{2}}\right] \\
& \leq C\left(  1+\mathbb{E}\left[  \sup\limits_{t\leq s\leq t+\delta}%
|X_{s}^{v}|^{p}\right]  \right)  \delta^{\frac{p}{2}}\\
& \leq C\delta^{\frac{p}{2}},
\end{array}
\label{ne-new-3}%
\end{equation}
where $p\in\lbrack2,4]$. Thus%
\begin{equation}
\mathbb{E}\left[  \int_{t}^{t+\delta}\left(  \left\vert X_{s}^{v}%
-x_{0}\right\vert +|Y_{s}^{1,v}|+\left\vert X_{s}^{v}-x_{0}\right\vert
^{2}+|Y_{s}^{1,v}|^{2}\right)  ds\right]  \leq C\delta^{\frac{3}{2}}.
\label{ne-new-1}%
\end{equation}
On the other hand, by (\ref{eq-11112}) and (\ref{ne-new-2}), we have%
\begin{align*}
\mathbb{E}\left[  {\int}_{t}^{t+\delta}|Z_{s}^{1,v}|^{2}ds\right]   &
=|Y_{t}^{1,v}|^{2}+\mathbb{E}\left[  \left(  {\int}_{t}^{t+\delta}%
F_{1}(s,X_{s}^{v},Y_{s}^{1,v},Z_{s}^{1,v},v_{s})ds\right)  ^{2}\right] \\
&  \leq2\mathbb{E}\left[  \left(  {\int}_{t}^{t+\delta}|F_{1}(s,X_{s}%
^{v},Y_{s}^{1,v},Z_{s}^{1,v},v_{s})|ds\right)  ^{2}\right]  .
\end{align*}
It is easy to check that%
\[
|F_{1}(s,X_{s}^{v},Y_{s}^{1,v},Z_{s}^{1,v},u_{s})|\leq C(1+|X_{s}^{v}%
|^{2}+|Y_{s}^{v}|^{2}+|Z_{s}^{v}|^{2}).
\]
Thus, by (\ref{new-new-23452}) and (\ref{ne-new-3}), we obtain%
\[
\mathbb{E}\left[  {\int}_{t}^{t+\delta}|Z_{s}^{1,v}|^{2}ds\right]  \leq
C\left(  \delta^{2}+\mathbb{E}\left[  \left(  {\int}_{t}^{t+\delta}|Z_{s}%
^{v}|^{2}ds\right)  ^{2}\right]  \right)  \leq C\delta^{2}.
\]
Since
\[
\mathbb{E}\left[  {\int}_{t}^{t+\delta}|Z_{s}^{1,v}|ds\right]  \leq\left(
\mathbb{E}\left[  {\int}_{t}^{t+\delta}|Z_{s}^{1,v}|^{2}ds\right]  \right)
^{\frac{1}{2}}\delta^{\frac{1}{2}}\leq C\delta^{\frac{3}{2}},
\]
we obtain the desired result.
\end{proof}

Now we compute $\inf_{v\in\mathcal{U}^{t}[t,t+\delta]}Y_{t}^{2,v}$.

\begin{lemma}
\label{le-inf}We have\ $Y_{t}^{0}=\inf_{v\in\mathcal{U}^{t}[t,t+\delta]}%
Y_{t}^{2,v},$ where $Y_{t}^{0}$ is the solution to the following ordinary
differential equation:
\[%
\begin{array}
[c]{rl}%
dY_{s}^{0}=-F_{0}\left(  s,x_{0}\right)  ds{,}\ Y_{t+\delta}^{0}=0,\text{
\ }s\in\lbrack t,t+\delta], &
\end{array}
\]
where \ $F_{0}\left(  s,x_{0}\right)  =\inf_{u\in U}F_{1}\left(
s,x_{0},0,0,u\right)  .$
\end{lemma}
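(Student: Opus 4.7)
My plan is to compute $Y^{2,v}_t$ explicitly for each $v$, then handle the two inequalities $\inf_v Y^{2,v}_t \ge Y^0_t$ and $\inf_v Y^{2,v}_t \le Y^0_t$ separately.

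First I would observe that, since $x_0$ is fixed and the driver of the BSDE \eqref{eq-new-12345} is $F_1(s,x_0,0,0,v_s)$, which depends only on $s$ and on the control value, this is a linear BSDE whose driver does not depend on $(Y^{2,v},Z^{2,v})$. Hence by taking conditional expectations we obtain the explicit representation
\[
Y^{2,v}_s = \mathbb{E}\!\left[\int_s^{t+\delta}F_1(r,x_0,0,0,v_r)\,dr \,\Big|\, \mathcal{F}^t_s\right],
\]
and in particular, since $v\in\mathcal{U}^t[t,t+\delta]$ implies $\mathcal{F}^t_t$ is trivial,
\[
Y^{2,v}_t = \mathbb{E}\!\left[\int_t^{t+\delta}F_1(r,x_0,0,0,v_r)\,dr\right].
\]
Similarly, since $F_0(\cdot,x_0)$ is deterministic, $Y^0_t = \int_t^{t+\delta} F_0(r,x_0)\,dr$.

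For the lower bound, from the pointwise inequality $F_1(r,x_0,0,0,u)\ge F_0(r,x_0)$ valid for every $u\in U$, I immediately get $F_1(r,x_0,0,0,v_r)\ge F_0(r,x_0)$ pathwise, and taking expectations and integrating yields $Y^{2,v}_t \ge Y^0_t$ for every $v\in\mathcal{U}^t[t,t+\delta]$.

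For the reverse inequality, I would show that there is a deterministic (Borel-measurable) selector $\bar{u}:[t,t+\delta]\to U$ such that $F_1(s,x_0,0,0,\bar u(s))=F_0(s,x_0)$ for every $s$. The ingredients are: $U$ is a nonempty compact subset of $\mathbb{R}^k$; $u\mapsto F_1(s,x_0,0,0,u)$ is continuous (this follows from the continuity of $b,\sigma,g$ in $u$, together with Lemma \ref{le-h} which gives continuity of $h(s,x_0,0,0,u)$ in $u$); and $s\mapsto F_1(s,x_0,0,0,u)$ is continuous in $s$ uniformly in $u\in U$ by the same lemmas. Hence the multifunction $s\mapsto \arg\min_{u\in U} F_1(s,x_0,0,0,u)$ has nonempty compact values and a Borel measurable selector $\bar u$ exists by Filippov's measurable selection theorem. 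Since $\bar u$ is deterministic, $\bar u\in\mathcal{U}^t[t,t+\delta]$, and plugging into the explicit formula gives $Y^{2,\bar u}_t=\int_t^{t+\delta}F_0(s,x_0)\,ds=Y^0_t$. Combining the two bounds yields $Y^0_t=\inf_{v\in\mathcal{U}^t[t,t+\delta]} Y^{2,v}_t$.

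The only nontrivial step is the measurable selection, but it is routine given the compactness of $U$ and the continuity properties of $F_1$ established via Lemma \ref{le-h}; the remainder is just the explicit linear-BSDE formula together with the pointwise definition of $F_0$.
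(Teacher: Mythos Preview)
Your proof is correct and follows essentially the same approach as the paper: establish $Y_t^{2,v}\ge Y_t^0$ from the pointwise inequality $F_1(s,x_0,0,0,u)\ge F_0(s,x_0)$, and then exhibit a deterministic minimizing control to get equality. The only cosmetic difference is that the paper phrases the lower bound via the BSDE comparison theorem rather than your explicit expectation formula, and simply asserts the existence of the deterministic selector $\mu$ without spelling out the measurable-selection argument you supply; since the driver here is independent of $(Y,Z)$, the two presentations are equivalent.
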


\begin{proof}
For each given $v\in\mathcal{U}^{t}[t,t+\delta]$, $F_{1}\left(  s,x_{0}%
,0,0,v_{s}\right)  \geq F_{0}\left(  s,x_{0}\right)  $, by comparison theorem
of BSDE, we get $Y_{t}^{2,v}\geq Y_{t}^{0}$. On the other hand, we can choose
a deterministic control $\mu$ in $\mathcal{U}^{t}[t,t+\delta]$ such
that\ $F_{0}\left(  s,x_{0}\right)  =F_{1}\left(  s,x_{0},0,0,\mu_{s}\right)
.$ It is clear that $Y_{t}^{2,\mu}=Y_{t}^{0}$. Thus we obtain the desired result.
\end{proof}

By Theorem \ref{th-ddp}, we have%
\[
W\left(  t,x_{0}\right)  =\underset{v\in\mathcal{U}^{t}[t,t+\delta]}{\inf
}G_{t,t+\delta}^{t,x_{0};v}\left[  W(t+\delta,\tilde{X}_{t+\delta}^{t,x_{0}%
;v})\right]  .
\]
Since $\varphi\left(  t+\delta,\cdot\right)  \geq W\left(  t+\delta
,\cdot\right)  $, by Theorem \ref{th-comp} in Appendix, we get $Y_{t}^{v}\geq
W\left(  t,x_{0}\right)  $ for each $v\in\mathcal{U}^{t}[t,t+\delta]$, which
implies
\[
\underset{v\in\mathcal{U}^{t}[t,t+\delta]}{\inf}\left[  Y_{t}^{v}%
-\varphi\left(  t,x_{0}\right)  \right]  =\underset{v\in\mathcal{U}%
^{t}[t,t+\delta]}{\inf}Y_{t}^{1,v}\geq0\text{.}%
\]
By Lemma \ref{le-129-1}, we deduce\ $\inf_{v\in\mathcal{U}^{t}[t,t+\delta
]}Y_{t}^{2,v}\geq-C\delta^{\frac{3}{2}}.$ It yields that $Y_{t}^{0}%
\geq-C\delta^{\frac{3}{2}}$ by Lemma \ref{le-inf}. Thus \ $-C\delta^{\frac
{1}{2}}\leq\frac{1}{\delta}Y_{t}^{0}=\frac{1}{\delta}\int_{t}^{t+\delta}%
F_{0}\left(  s,x_{0}\right)  ds.$ Letting $\delta\rightarrow0$, we get
$F_{0}\left(  t,x_{0}\right)  \geq0$, which implies that $W\ $is a viscosity
subsolution. By the same method, we can prove that $W$ is a viscosity
supersolution. Thus $W\ $is a viscosity solution. $\square$

\begin{remark}
\label{re-mon} Note that Assumption \ref{assum-1} (ii) is only used to
guarantees the well-posedness of our fully coupled forward-backward controlled
system. In fact, following our approach, the readers may verify that all the
results in Section 3 still hold under Assumptions \ref{assum-1} (i),
\ref{assum-l3} and the following monotonicity conditions.
\end{remark}

Given a nonzero $G\in\mathbb{R}^{1\times n}$, define
\[
\lambda=(x,y,z)^{\intercal},\ A\left(  t,\lambda,u\right)  =(-G^{\intercal}g,
Gb, G\sigma)^{\intercal}(t,\lambda,u).
\]

%\[
%\lambda=\left(
%\begin{array}
%[c]{c}%
%x\\
%y\\
%z
%\end{array}
%\right)  ,\text{ }A\left(  t,\lambda,u\right)  =\left(
%\begin{array}
%[c]{c}%
%-G^{\intercal}g\\
%Gb\\
%G\sigma
%\end{array}
%\right)  \left(  t,\lambda,u\right)  .
%\]

\begin{assumption}
\label{assm-mon} (Monotonicity conditions)\newline(i) $\left\langle A\left(
t,\lambda,u\right)  -A\left(  t,\bar{\lambda},u\right)  ,\lambda-\bar{\lambda
}\right\rangle \leq-\beta_{1}\left\vert G\hat{x}\right\vert ^{2}-\beta
_{2}\left(  \left\vert G^{\intercal}\hat{y}\right\vert ^{2}+\left\vert
G^{\intercal}\hat{z}\right\vert ^{2}\right)  $, for $u\in U$;\newline(ii)
$\left\langle \phi\left(  x\right)  -\phi\left(  \bar{x}\right)  ,G\hat
{x}\right\rangle \geq\mu_{1}\left\vert G\hat{x}\right\vert ^{2}$, where
$\hat{x}=x-\bar{x}$, $\hat{y}=y-\bar{y}$, $\hat{z}=z-\bar{z}$, $\beta_{1}$,
$\beta_{2}$, $\mu_{1}$ are given nonnegative constants with $\beta_{1}%
+\beta_{2}>0$, $\beta_{2}+\mu_{1}>0$. Moreover, $\beta_{2}>0$ when $n>1$.
\end{assumption}

\section{The uniqueness of viscosity solutions}

In this section, we study the uniqueness of the viscosity solution to the HJB
equation (\ref{eq-hjb}).

\subsection{$\sigma$ independent of $y$ and $z$}

In this case, the corresponding HJB equation becomes
\begin{equation}
\left\{
\begin{array}
[c]{l}%
\partial_{t}W(t,x)+\inf\limits_{u\in U}H(t,x,W(t,x),DW(t,x),D^{2}%
W(t,x),u)=0,\\
W(T,x)=\phi(x),
\end{array}
\right.  \label{eq-hjb-yz}%
\end{equation}
where%
\[%
\begin{array}
[c]{l}%
H(t,x,v,p,A,u)\\
=\frac{1}{2}\mathrm{tr}[\sigma\sigma^{\intercal}(t,x,u)A]+p^{\intercal
}b(t,x,v,p^{\intercal}\sigma(t,x,u),u)+g(t,x,v,p^{\intercal}\sigma
(t,x,u),u),\\
(t,x,v,p,A,u)\in\lbrack0,T]\times\mathbb{R}^{n}\times\mathbb{R}\times
\mathbb{R}^{n}\times\mathbb{S}^{n}\times U.
\end{array}
\]

We adopt the approach in Barles, Buckdahn and Pardoux \cite{Baeles-BP} (see
also Wu and Yu \cite{Wu-Y}) to prove the uniqueness of the viscosity solution
to (\ref{eq-hjb-yz}) in the following theorem. Note that applying the approach
in \cite{Baeles-BP}, Buckdahn and Li \cite{Buckdahn-Li} studied a decoupled
case and Wu and Yu \cite{Wu-Y} obtained the uniqueness result for coefficients
which are independent of $u$.

\begin{theorem}
\label{th-vis-uni}Suppose that $\sigma$ is independent of $y$ and $z$ and
Assumption \ref{assum-1} (i) holds. Then there exists at most one viscosity
solution to (\ref{eq-hjb-yz}) in the class of continuous functions which are
Lipschitz continuous with respect to $x$.
\end{theorem}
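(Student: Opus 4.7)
The plan is to prove a comparison principle for (\ref{eq-hjb-yz}) via the doubling-of-variables technique of Barles, Buckdahn and Pardoux: if $W_1$ is a viscosity subsolution and $W_2$ a viscosity supersolution, both continuous and Lipschitz in $x$ uniformly in $t$ with common Lipschitz constant $L_W$, and $W_1(T,\cdot)\le\phi\le W_2(T,\cdot)$, then $W_1\le W_2$. Uniqueness follows at once. Since $\sigma$ is independent of $y$ and $z$, the algebraic equation is solved explicitly by $V(t,x,v,p,u)=p^{\intercal}\sigma(t,x,u)$, so $H(t,x,v,p,A,u)=\frac12\mathrm{tr}[\sigma\sigma^{\intercal}(t,x,u)A]+p^{\intercal}b(t,x,v,p^{\intercal}\sigma(t,x,u),u)+g(t,x,v,p^{\intercal}\sigma(t,x,u),u)$ is Lipschitz in $(x,v,p)$ on any ball in $p$, with constants determined by $L_1,L_2,L_3$ and the bound on $|p|$.

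First I would absorb the $v$-dependence of $H$ by the change of unknown $\tilde W_i(t,x)=e^{\lambda(T-t)}W_i(t,x)$, choosing $\lambda$ large enough (depending on $L_1,L_2,L_3$ and on $L_W$) so that the transformed equation becomes strictly monotone in $v$; the Lipschitz bound on $W_i$ in $x$ guarantees that only $|p|\le L_W$ matters in test functions, so a finite $\lambda$ suffices. To accommodate the linear growth of $W_i$ in $x$ I would use the standard weight $\theta(x)=\sqrt{1+|x|^2}$ and, for $\alpha,\beta,\varepsilon>0$, consider
\[
\Psi(t,x,y)=\tilde W_1(t,x)-\tilde W_2(t,y)-\alpha|x-y|^2-\beta(\theta(x)+\theta(y))-\frac{\varepsilon}{T-t}.
\]
Under the contradictory hypothesis $\sup(\tilde W_1-\tilde W_2)>0$, for $\beta,\varepsilon$ sufficiently small and $\alpha$ large, $\Psi$ attains a positive maximum at some interior point $(t_0,x_0,y_0)$ with $t_0<T$; the usual penalization estimates give $\alpha|x_0-y_0|^2\to0$ as $\alpha\to\infty$, while the penalty gradients stay bounded in terms of $L_W$ and $\beta$.

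Next I would apply the Crandall--Ishii lemma at $(t_0,x_0,y_0)$ to produce jets $(a_1,p_1,X)\in\overline{\mathcal P}^{2,+}\tilde W_1(t_0,x_0)$ and $(a_2,p_2,Y)\in\overline{\mathcal P}^{2,-}\tilde W_2(t_0,y_0)$ with $a_1-a_2=\varepsilon/(T-t_0)^2$, $p_1=2\alpha(x_0-y_0)+\beta D\theta(x_0)$, $p_2=2\alpha(x_0-y_0)-\beta D\theta(y_0)$, and the standard matrix inequality controlling $X-Y$ by $\alpha$ plus the second derivatives of the $\beta$-penalty. Writing the subsolution inequality at $(t_0,x_0)$ and the supersolution inequality at $(t_0,y_0)$ and subtracting, the second-order term is handled by the Crandall--Ishii matrix inequality together with the Lipschitz continuity of $\sigma$ in $x$; the first-order and zeroth-order terms are handled by the Lipschitz continuity of $b,g$ in $(x,v,z)$; the exponential transformation contributes the positive quantity $\lambda(\tilde W_1(t_0,x_0)-\tilde W_2(t_0,y_0))>0$ on the favorable side of the inequality. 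Letting $\alpha\to\infty$, then $\beta\to0$, then $\varepsilon\to0$ yields the contradiction.

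The main obstacle is the nonstandard appearance of $p^{\intercal}\sigma(t,x,u)$ inside the $z$-slots of $b$ and $g$: when subtracting the two viscosity inequalities one must control $p_1^{\intercal}\sigma(t_0,x_0,u)-p_2^{\intercal}\sigma(t_0,y_0,u)$ uniformly in $u\in U$. Here the fact that both viscosity solutions are a priori Lipschitz in $x$ is crucial, since it gives $|p_1|,|p_2|\le L_W+O(\beta)$; combined with the Lipschitz continuity of $\sigma$ in $x$ this difference is $O(|x_0-y_0|)+O(\beta)$, which vanishes in the limit. This step is exactly what fails once $\sigma$ depends on $z$ (as noted in Remark \ref{re-appen}), because the implicit equation defining $V$ then forces an extra factor involving $(1-L_3\|DW\|_\infty)^{-1}$ that cannot be closed via the test function gradients; the hypothesis that $\sigma$ is independent of $y$ and $z$ is exactly what keeps the argument tight.
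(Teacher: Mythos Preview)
Your overall strategy---doubling variables directly on the sub- and supersolution and invoking the Crandall--Ishii lemma---is a legitimate route, and you have correctly singled out the decisive point: the a~priori Lipschitz bound on $W_1,W_2$ forces $|p_1|,|p_2|\le L_W+O(\beta)$ in the closure of the parabolic jets, so that the difference $p_1^{\intercal}\sigma(t_0,x_0,u)-p_2^{\intercal}\sigma(t_0,y_0,u)$ is controlled uniformly in $u$. The paper proceeds somewhat differently: it first establishes (Lemma~\ref{le-uni2}) that $w:=W_1-W_2$ is itself a viscosity subsolution of the \emph{linear} equation $w_t+\sup_{u}\{\tfrac12\mathrm{tr}[\sigma\sigma^{\intercal}D^2w]+C(1+|x|)|Dw|+C|w|\}=0$, and then compares $w$ with an explicit smooth strict supersolution (Lemma~\ref{le-uni3}). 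The doubling and Ishii's lemma are used only in the linearization step; the second step is a direct barrier argument, repeated on successive time intervals.

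There are, however, two genuine gaps in your execution. First, the change of unknown $\tilde W_i=e^{\lambda(T-t)}W_i$ does not yield a clean transformed equation: since $H$ is nonlinear and non-homogeneous in $(v,p,A)$, the factors $e^{-\lambda(T-t)}$ entering through $W_i,DW_i,D^2W_i$ do not separate into a simple additive $-\lambda\tilde W_i$ term, so the claimed strict monotonicity in $v$ is unjustified as stated. Second, and more seriously, your spatial weight $\theta(x)=\sqrt{1+|x|^2}$ has exactly the same linear growth as the a~priori bound $|W_1(t,x)-W_2(t,x)|\le C(1+|x|)$; consequently, for small $\beta$ there is no guarantee that $\Psi$ attains its supremum on $[0,T]\times\mathbb R^{n}\times\mathbb R^{n}$, and the limit $\beta\to0$ breaks down. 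The paper avoids both issues simultaneously by using the super-polynomially growing weight $\chi(t,x)=\exp\{(C_1(T-t)+A)[\log((|x|^2+1)^{1/2})]^2\}$, which dominates linear growth for every coefficient $\alpha>0$, and by absorbing the zero-order term $C|w|$ through an iteration over time intervals of fixed length $A/C_1$ rather than through an exponential transformation of the unknowns. If you wish to retain a one-step doubling argument, you would need to replace $\theta$ by a weight of this type and handle the $v$-dependence either by inserting an exponential factor directly into the doubling functional (not into the unknowns) or by a similar time-slicing.
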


See Subsection \ref{subse-pf} in Appendix for the details.

\subsection{$\sigma$ depends on $y$ and $z$}

Wu and Yu \cite{Wu-Y} studied a PDE system for which the coefficient $\sigma$
of the corresponding FBSDE satisfies $\sigma\left(  t,x,y,0\right)  =0$. Under
this assumption, the fully coupled FBSDE degenerates to a forward-backward
ordinary differential equation and the PDE system degenerates to a first order
PDE. Thus, for this case, the uniqueness result is implied by Theorem
\ref{th-vis-uni}.

In this subsection, we study the HJB equation in which $\sigma$ is dependent
on $y$ and $z$. As pointed out in Remark \ref{re-appen}, the method in
\cite{Baeles-BP} does not work. We first give the following proposition.

\begin{proposition}
\label{pr-um}Suppose $\sigma$ is independent of $y$ and $z$; and one of the
following two conditions holds true:

\begin{description}
\item[(i)] Assumption \ref{assum-1} holds;

\item[(ii)] Assumptions \ref{assum-1} (i) and \ref{assm-mon} hold.
\end{description}

\noindent Let $W$ be the value function. Then, for each $(t,x)\in
\lbrack0,T)\times\mathbb{R}^{n}$, we can find a sequence $u^{m}\in
\mathcal{U}^{t}[t,T]$ such that
\[
\mathbb{E}\left[  {\int}_{t}^{T}\left\vert Y_{s}^{t,x;u^{m}}-W\left(
s,X_{s}^{t,x;u^{m}}\right)  \right\vert ^{2}ds\right]  +\left\vert
Y_{t}^{t,x;u^{m}}-W\left(  t,x\right)  \right\vert \rightarrow0\text{ as
}m\rightarrow\infty.
\]

\end{proposition}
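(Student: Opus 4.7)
The plan is to construct $u^m$ by concatenating $\varepsilon_m/N$-optimal control pieces produced by the DPP on a fine partition of $[t,T]$, then to control the gap $\Delta_s^m := Y_s^{t,x;u^m} - W(s, X_s^{t,x;u^m})$ through short-time FBSDE stability. By the flow property and the DPP at time $s$, $Y_s^{t,x;u^m} = Y_s^{s,X_s^{t,x;u^m};u^m|_{[s,T]}} \ge \inf_v Y_s^{s,X_s^{t,x;u^m};v} = W(s,X_s^{t,x;u^m})$, so $\Delta^m$ is nonnegative, and $\Delta_T^m = 0$ because $\phi = W(T,\cdot)$.

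For each $m$, set $\varepsilon_m = 1/m$, pick $N = N(m)$ large, and let $t = t_0 < t_1 < \cdots < t_N = T$ with $\delta = (T-t)/N$. I would define $u^m$ piecewise: on each $[t_{i-1}, t_i]$, Theorem~\ref{th-ddp} applied at the state $X_{t_{i-1}}^{t,x;u^m}$ guarantees the existence of a piece $u^m|_{[t_{i-1},t_i]}$ with
\[
G_{t_{i-1}, t_i}^{t_{i-1}, X_{t_{i-1}}^{t,x;u^m}; u^m}\!\big[W(t_i, \tilde X_{t_i})\big] \le W(t_{i-1}, X_{t_{i-1}}^{t,x;u^m}) + \varepsilon_m/N.
\]
Adaptedness to $\{\mathcal{F}_s^t\}$ is enforced by the countable-partition / measurable-selection argument already used in Proposition~\ref{pro-new-1}, so $u^m \in \mathcal{U}^t[t,T]$.

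The gap estimate proceeds by backward induction on the grid. On $[t_{i-1}, t_i]$, I compare the restriction of the true trajectory $(X^{t,x;u^m}, Y^{t,x;u^m}, Z^{t,x;u^m})$ (terminal $Y_{t_i}^{t,x;u^m}$) with the auxiliary $(\tilde X, \tilde Y, \tilde Z)$ (terminal $W(t_i, \tilde X_{t_i})$), both starting at $X_{t_{i-1}}^{t,x;u^m}$ under the same $u^m$. Applying Theorem~2.2 in \cite{Hu-JX} on the short interval, together with the Lipschitz bound $|W(t_i,x)-W(t_i,x')| \le L_W|x-x'|$ from Lemma~\ref{le-w-new} (to absorb $|W(t_i,X_{t_i}^{t,x;u^m}) - W(t_i,\tilde X_{t_i})|$) and the elementary inequality $(a+b)^2 \le (1+\alpha)a^2 + (1 + 1/\alpha)b^2$ with $\alpha$ of order $\delta$, one obtains
\[
\mathbb E[|\Delta_{t_{i-1}}^m|^2] \le (1 + C\delta)\, \mathbb E[|\Delta_{t_i}^m|^2] + 2(\varepsilon_m/N)^2,
\]
where $C$ depends only on the Lipschitz constants and $L_W$. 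Iterating from $\Delta_T^m \equiv 0$ gives $\max_i \mathbb E[|\Delta_{t_i}^m|^2] \le C'\varepsilon_m^2/N$. The same short-time stability applied pointwise on each $[t_{i-1}, t_i]$ controls $\mathbb E[\sup_{s\in[t_{i-1},t_i]}|\Delta_s^m|^2]$ in terms of $\mathbb E[|\Delta_{t_i}^m|^2]$ and $(\varepsilon_m/N)^2$; integrating and summing over $i$ yields $\mathbb E[\int_t^T|\Delta_s^m|^2\,ds] \le C''\varepsilon_m^2$. Since $\Delta_t^m$ is deterministic, $|Y_t^{t,x;u^m} - W(t,x)| = \Delta_t^m$ obeys the same scale, and the proposition follows by letting $m\to\infty$.

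The principal obstacle is that, although $\sigma$ is independent of $(y,z)$, the drift $b$ still couples $X$ to $(Y,Z)$, so the auxiliary trajectory $\tilde X$ in the backward semigroup does not equal $X^{t,x;u^m}$. Bounding $|X_{t_i}^{t,x;u^m} - \tilde X_{t_i}|$ requires the short-time FBSDE estimates of \cite{Hu-JX}, in which the forward sensitivity to terminal $Y$-perturbations is $o(1)$ as $\delta\to 0$; and the backward induction closes only because the $(1+\alpha)$ trick (with $\alpha = O(\delta)$) keeps the per-step amplification at $1 + O(\delta)$ rather than at an $O(1)$ constant, so that the discrete-time Gronwall factor remains bounded over the full interval $[t,T]$.
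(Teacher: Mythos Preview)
Your construction contains a circularity that the fully coupled structure makes fatal. You propose to choose $u^m|_{[t_{i-1},t_i]}$ by applying the DPP at the state $X_{t_{i-1}}^{t,x;u^m}$, but in the coupled FBSDE \eqref{state-eq} the forward drift $b$ depends on $(Y,Z)$; hence $X_{t_{i-1}}^{t,x;u^m}$ depends on $(Y_s^{t,x;u^m},Z_s^{t,x;u^m})_{s\in[t,t_{i-1}]}$, which through the backward equation depends on $u^m$ on the \emph{entire} interval $[t,T]$. So the state at which you invoke the DPP is not available until $u^m$ has been fully specified, and the piecewise forward definition does not get off the ground. The paper sidesteps this by basing the choice of $u^{i,m}$ on the \emph{auxiliary} piecewise state $X^{i-1,m}_{t_i^m}$ (the terminal state of the previous short-interval FBSDE with terminal datum $W(t_i^m,\cdot)$), which is determined once $u^{0,m},\ldots,u^{i-1,m}$ are; this yields a well-defined forward construction of a glued process $(X^m,Y^m,Z^m)$ that is \emph{not} the solution of \eqref{state-eq}, and two further comparison steps (an intermediate decoupled BSDE $(\tilde Y^m,\tilde Z^m)$ sharing the same $X^m$, then an FBSDE stability estimate against the true $(X^{t,x;u^m},Y^{t,x;u^m},Z^{t,x;u^m})$) are needed to close the argument.

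Even granting a non-circular construction, your recursion $\mathbb{E}[|\Delta_{t_{i-1}}^m|^2]\le (1+C\delta)\,\mathbb{E}[|\Delta_{t_i}^m|^2]+2(\varepsilon_m/N)^2$ is not established. The two processes you compare on $[t_{i-1},t_i]$ are solutions of genuinely coupled FBSDEs with different terminal data, so their forward components differ; the bound $|W(t_i,X_{t_i})-W(t_i,\tilde X_{t_i})|\le L_W|X_{t_i}-\tilde X_{t_i}|$ feeds back into the very FBSDE estimate you are invoking, and extracting a per-step amplification of exactly $1+O(\delta)$ from Theorem~2.2 of \cite{Hu-JX} would require a careful short-time refinement that you have not supplied. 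The paper does not attempt this: its recursion (see \eqref{new-lx-11}--\eqref{new-lx-12}) carries a fixed constant $C$, and the potential exponential blow-up over $m$ steps is absorbed by taking the per-step error of size $m^{-2}(2C+1)^{-m}$ rather than $\varepsilon_m/N$.
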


\begin{proof}
We only prove the first case (the condition (i) holds). The proof for the
second case is similar. The proof is divided into three Steps.

\textbf{Step 1.} For each given integer $m\geq1$, set $t_{i}^{m}%
=t+i(T-t)m^{-1}$ for $i=0,\ldots,m$. We want to choose a $u^{i,m}%
\in\mathcal{U}^{t}[t_{i}^{m},t_{i+1}^{m}]$ such that%
\begin{equation}
|Y_{t_{i}^{m}}^{i,m}-W\left(  t_{i}^{m},X_{t_{i}^{m}}^{i-1,m}\right)
|\leq\frac{1}{m^{2}(2C+1)^{m}}\text{ for }i=0,\ldots,m-1, \label{new-lx-2}%
\end{equation}
where $C$ is given in Lemma \ref{le-w-new}, $X_{t_{0}^{m}}^{-1,m}=x$ and
$(X^{i,m},Y^{i,m},Z^{i,m})$ is the solution to following FBSDE:%
\begin{equation}
\left\{
\begin{array}
[c]{rl}%
dX_{s}^{i,m}= & b(s,X_{s}^{i,m},Y_{s}^{i,m},Z_{s}^{i,m},u_{s}^{i,m}%
)ds+\sigma(s,X_{s}^{i,m},u_{s}^{i,m})dB_{s},\\
dY_{s}^{i,m}= & -g(s,X_{s}^{i,m},Y_{s}^{i,m},Z_{s}^{i,m},u_{s}^{i,m}%
)ds+Z_{s}^{i,m}dB_{s},\text{ }s\in\lbrack t_{i}^{m},t_{i+1}^{m}],\\
X_{t_{i}^{m}}^{i,m}= & X_{t_{i}^{m}}^{i-1,m},\ Y_{t_{i+1}^{m}}^{i,m}%
=W(t_{i+1}^{m},X_{t_{i+1}^{m}}^{i,m}).
\end{array}
\right.  \label{new-lx-3}%
\end{equation}
Precisely, on the interval $[t_{0}^{m},t_{1}^{m}]$, by Theorem \ref{th-ddp},
we can choose a $u^{0,m}\in\mathcal{U}^{t}[t_{0}^{m},t_{1}^{m}]$ such that%
\[
\left\vert Y_{t_{0}^{m}}^{0,m}-W\left(  t_{0}^{m},x\right)  \right\vert
\leq\frac{1}{m^{2}(2C+1)^{m+1}},
\]
where $(X^{0,m},Y^{0,m},Z^{0,m})$ is the solution to FBSDE (\ref{new-lx-3})
for $i=0$. On the interval $[t_{1}^{m},t_{2}^{m}]$, we first choose a
partition $\{A_{j}^{m}:j\geq1\}$ of $\mathbb{R}^{n}$ and $x_{j}^{m}%
\in\mathbb{R}^{n}$ such that%
\[
\left\vert \sum_{j=1}^{\infty}x_{j}^{m}I_{A_{j}^{m}}(X_{t_{1}^{m}}%
^{0,m})-X_{t_{1}^{m}}^{0,m}\right\vert \leq\frac{1}{m^{2}(2C+1)^{m+1}}.
\]
By Theorem \ref{th-ddp}, for each $x_{j}^{m}$, we can choose a $u^{1,j,m}%
\in\mathcal{U}^{t}[t_{1}^{m},t_{2}^{m}]$ such that%
\[
\left\vert Y_{t_{1}^{m}}^{1,j,m}-W\left(  t_{1}^{m},x_{j}^{m}\right)
\right\vert \leq\frac{1}{m^{2}(2C+1)^{m+1}},
\]
where $(X^{1,j,m},Y^{1,j,m},Z^{1,j,m})$ is the solution to the following
FBSDE:%
\begin{equation}
\left\{
\begin{array}
[c]{rl}%
dX_{s}^{1,j,m}= & b(s,X_{s}^{1,j,m},Y_{s}^{1,j,m},Z_{s}^{1,j,m},u_{s}%
^{1,j,m})ds+\sigma(s,X_{s}^{1,j,m},u_{s}^{1,j,m})dB_{s},\\
dY_{s}^{1,j,m}= & -g(s,X_{s}^{1,j,m},Y_{s}^{1,j,m},Z_{s}^{1,j,m},u_{s}%
^{1,j,m})ds+Z_{s}^{1,j,m}dB_{s},\text{ }s\in\lbrack t_{1}^{m},t_{2}^{m}],\\
X_{t_{1}^{m}}^{1,j,m}= & x_{j}^{m},\ Y_{t_{2}^{m}}^{1,j,m}=W(t_{2}%
^{m},X_{t_{2}^{m}}^{1,j,m}).
\end{array}
\right.  \label{new-lx-4}%
\end{equation}
Set
\begin{equation}
u_{s}^{1,m}=\sum_{j=1}^{\infty}I_{A_{j}^{m}}(X_{t_{1}^{m}}^{0,m})u_{s}%
^{1,j,m}\in\mathcal{U}^{t}[t_{1}^{m},t_{2}^{m}]\;\text{for }s\in\lbrack
t_{1}^{m},t_{2}^{m}].
\end{equation}
By Theorem 2.2 in \cite{Hu-JX} for (\ref{new-lx-3}) and (\ref{new-lx-4}),%
\[
\left\vert Y_{t_{1}^{m}}^{1,m}-Y_{t_{1}^{m}}^{1,j,m}\right\vert I_{A_{j}^{m}%
}(X_{t_{1}^{m}}^{0,m})\leq C\left\vert X_{t_{1}^{m}}^{0,m}-x_{j}%
^{m}\right\vert I_{A_{j}^{m}}(X_{t_{1}^{m}}^{0,m}).
\]
Thus, by Lemma \ref{le-w-new},%
\[%
\begin{array}
[c]{l}%
\left\vert Y_{t_{1}^{m}}^{1,m}-W\left(  t_{1}^{m},X_{t_{1}^{m}}^{0,m}\right)
\right\vert \\
\leq\left\vert Y_{t_{1}^{m}}^{1,m}-\sum_{j=1}^{\infty}Y_{t_{1}^{m}}%
^{1,j,m}I_{A_{j}^{m}}(X_{t_{1}^{m}}^{0,m})\right\vert +\left\vert \sum
_{j=1}^{\infty}Y_{t_{1}^{m}}^{1,j,m}I_{A_{j}^{m}}(X_{t_{1}^{m}}^{0,m}%
)-W\left(  t_{1}^{m},X_{t_{1}^{m}}^{0,m}\right)  \right\vert \\
\leq C\left\vert X_{t_{1}^{m}}^{0,m}-\sum_{j=1}^{\infty}x_{j}^{m}I_{A_{j}^{m}%
}(X_{t_{1}^{m}}^{0,m})\right\vert +\frac{1}{m^{2}(2C+1)^{m+1}}\\
\ \ +\left\vert W\left(  t_{1}^{m},\sum_{j=1}^{\infty}x_{j}^{m}I_{A_{j}^{m}%
}(X_{t_{1}^{m}}^{0,m})\right)  -W\left(  t_{1}^{m},X_{t_{1}^{m}}^{0,m}\right)
\right\vert \\
\leq\frac{1}{m^{2}(2C+1)^{m}}.
\end{array}
\]
Similarly, we can choose the desired $u^{i,m}\in\mathcal{U}^{t}[t_{i}%
^{m},t_{i+1}^{m}]$ for $i=2,\ldots,m-1$.

\textbf{Step 2.} Set \ $
%\begin{align*}
u_{s}^{m} =\sum_{i=0}^{m-1}u_{s}^{i,m}I_{[t_{i}^{m},t_{i+1}^{m})}(s),\text{
}X_{s}^{m}=\sum_{i=0}^{m-1}X_{s}^{i,m}I_{[t_{i}^{m},t_{i+1}^{m})}(s),\newline
Y_{s}^{m} =\sum_{i=0}^{m-1}Y_{s}^{i,m}I_{[t_{i}^{m},t_{i+1}^{m})}(s),\text{
}Z_{s}^{m}=\sum_{i=0}^{m-1}Z_{s}^{i,m}I_{[t_{i}^{m},t_{i+1}^{m})}(s),$
%\end{align*}
where $u^{i,m}$, $X^{i,m}$, $Y^{i,m}$ and $Z^{i,m}$ are given in Step 1. It is
easy to check that $X^{m}$ satisfies the following SDE:%
\[
\left\{
\begin{array}
[c]{rl}%
dX_{s}^{m}= & b(s,X_{s}^{m},Y_{s}^{m},Z_{s}^{m},u_{s}^{m})ds+\sigma
(s,X_{s}^{m},u_{s}^{m})dB_{s},\\
X_{t}^{m}= & x,\text{ }s\in\lbrack t,T].
\end{array}
\right.
\]
Let $(X^{m},\tilde{Y}^{m},\tilde{Z}^{m})$ be the solution to the following
decoupled FBSDE:%
\begin{equation}
\left\{
\begin{array}
[c]{rl}%
dX_{s}^{m}= & b(s,X_{s}^{m},Y_{s}^{m},Z_{s}^{m},u_{s}^{m})ds+\sigma
(s,X_{s}^{m},u_{s}^{m})dB_{s},\\
d\tilde{Y}_{s}^{m}= & -g(s,X_{s}^{m},\tilde{Y}_{s}^{m},\tilde{Z}_{s}^{m}%
,u_{s}^{m})ds+\tilde{Z}_{s}^{m}dB_{s},\text{ }s\in\lbrack t,T],\\
X_{t}^{m}= & x,\ \tilde{Y}_{T}^{m}=\phi(X_{T}^{m}).
\end{array}
\right.  \label{new-lx-8}%
\end{equation}
In the followings, we want to prove%
\begin{equation}%
\begin{array}
[c]{rl}
& \mathbb{E}\left[  {\int}_{t}^{T}\left(  \left\vert \tilde{Y}_{s}%
^{m}-W\left(  s,X_{s}^{m}\right)  \right\vert ^{2}+|\tilde{Y}_{s}^{m}%
-Y_{s}^{m}|^{2}+|\tilde{Z}_{s}^{m}-Z_{s}^{m}|^{2}\right)  ds\right] \\
& \ \ +|\tilde{Y}_{t}^{m}-W\left(  t,x\right)  |\rightarrow0\text{ as
}m\rightarrow\infty.
\end{array}
\label{new-lx-9}%
\end{equation}
Note that $(X^{m},\tilde{Y}^{m},\tilde{Z}^{m})$ satisfies the following FBSDE
on $[t_{i}^{m},t_{i+1}^{m}]$:%
\begin{equation}
\left\{
\begin{array}
[c]{rl}%
dX_{s}^{m}= & b(s,X_{s}^{m},Y_{s}^{m},Z_{s}^{m},u_{s}^{m})ds+\sigma
(s,X_{s}^{m},u_{s}^{m})dB_{s},,\\
d\tilde{Y}_{s}^{m}= & -g(s,X_{s}^{m},\tilde{Y}_{s}^{m},\tilde{Z}_{s}^{m}%
,u_{s}^{m})ds+\tilde{Z}_{s}^{m}dB_{s},\text{ }s\in\lbrack t_{i}^{m}%
,t_{i+1}^{m}],\\
X_{t_{i}^{m}}^{m}= & X_{t_{i}^{m}}^{i-1,m},\ \tilde{Y}_{t_{i+1}^{m}}%
^{m}=\tilde{Y}_{t_{i+1}^{m}}^{m}.
\end{array}
\right.  \label{new-lx-10}%
\end{equation}
Then by Theorem 2.2 in \cite{Hu-JX} for FBSDEs (\ref{new-lx-3}) and
(\ref{new-lx-10}),%
\begin{equation}%
\begin{array}
[c]{rl}
& \mathbb{E}\left[  \left.  \sup\limits_{t_{i}^{m}\leq s\leq t_{i+1}^{m}%
}\left\vert \tilde{Y}_{s}^{m}-Y_{s}^{m}\right\vert ^{2}+\int_{t_{i}^{m}%
}^{t_{i+1}^{m}}|\tilde{Z}_{s}^{m}-Z_{s}^{m}|^{2}ds\right\vert \mathcal{F}%
_{t_{i}^{m}}\right] \\
& \leq C^{2}\mathbb{E}\left[  \left.  \left\vert W(t_{i+1}^{m},X_{t_{i+1}^{m}%
}^{m})-\tilde{Y}_{t_{i+1}^{m}}^{m}\right\vert ^{2}\right\vert \mathcal{F}%
_{t_{i}^{m}}\right]  ,
\end{array}
\label{new-lx-11}%
\end{equation}
where $C$ is the same as in Step 1. It yields that%
\begin{equation}%
\begin{array}
[c]{rl}
& \left\vert \tilde{Y}_{t_{i}^{m}}^{m}-W(t_{i}^{m},X_{t_{i}^{m}}%
^{m})\right\vert \\
& \leq\left\vert Y_{t_{i}^{m}}^{m}-W(t_{i}^{m},X_{t_{i}^{m}}^{m})\right\vert
+C\left\{  \mathbb{E}\left[  \left.  \left\vert W(t_{i+1}^{m},X_{t_{i+1}^{m}%
}^{m})-\tilde{Y}_{t_{i+1}^{m}}^{m}\right\vert ^{2}\right\vert \mathcal{F}%
_{t_{i}^{m}}\right]  \right\}  ^{\frac{1}{2}}.
\end{array}
\label{new-lx-12}%
\end{equation}
Note that (\ref{new-lx-2}), (\ref{new-lx-12}), $W(t_{m}^{m},X_{t_{m}^{m}}%
^{m})=\phi(X_{T}^{m})$ and Lemma \ref{le-w-new}. By doing estimate
recursively, we obtain%
\begin{equation}
\left\vert \tilde{Y}_{t_{i}^{m}}^{m}-W(t_{i}^{m},X_{t_{i}^{m}}^{m})\right\vert
\leq\frac{1+C+\cdots+C^{m-i-1}}{m^{2}(2C+1)^{m}}\leq\frac{1}{m}.
\label{new-lx-13}%
\end{equation}
Combining (\ref{new-lx-11}) and (\ref{new-lx-13}), we get%
\begin{equation}%
\begin{array}
[c]{rl}%
\mathbb{E}\left[  {\int}_{t}^{T}\left(  |\tilde{Y}_{s}^{m}-Y_{s}^{m}%
|^{2}+|\tilde{Z}_{s}^{m}-Z_{s}^{m}|^{2}\right)  ds\right]  & \leq C^{2}%
\sum\limits_{i=0}^{m-1}\mathbb{E}\left[  \left\vert W(t_{i+1}^{m}%
,X_{t_{i+1}^{m}}^{m})-\tilde{Y}_{t_{i+1}^{m}}^{m}\right\vert ^{2}\right] \\
& \leq\frac{C^{2}}{m}.
\end{array}
\label{new-lx-14}%
\end{equation}
By Theorem 2.2 in \cite{Hu-JX} for FBSDE (\ref{new-lx-8}) and (\ref{new-lx-14}%
), we obtain that%
\begin{equation}
\mathbb{E}\left[  \sup\limits_{t\leq s\leq T}|X_{s}^{m}|^{2}+{\int}_{t}%
^{T}\left(  |Y_{s}^{m}|^{2}+|Z_{s}^{m}|^{2}+|\tilde{Y}_{s}^{m}|^{2}+|\tilde
{Z}_{s}^{m}|^{2}\right)  ds\right]  \leq C^{\prime}(1+|x|^{2}),
\label{new-lx-15}%
\end{equation}
where $C^{\prime}$ is a constant which is independent of $m$. For $s\in\lbrack
t_{i}^{m},t_{i+1}^{m}]$, by (\ref{new-lx-15}) and Lemma \ref{le-w-new}, we
have
\begin{align*}
\mathbb{E}\left[  \left\vert \tilde{Y}_{s}^{m}-\tilde{Y}_{t_{i+1}^{m}}%
^{m}\right\vert ^{2}\right]   &  =\mathbb{E}\left[  \left\vert \mathbb{E}%
\left[  \left.  \int_{s}^{t_{i+1}^{m}}g(r,X_{r}^{m},\tilde{Y}_{r}^{m}%
,\tilde{Z}_{r}^{m},u_{r}^{m})dr\right\vert \mathcal{F}_{s}\right]  \right\vert
^{2}\right] \\
&  \leq\frac{T}{m}\mathbb{E}\left[  \int_{t_{i}^{m}}^{t_{i+1}^{m}}%
|g(r,X_{r}^{m},\tilde{Y}_{r}^{m},\tilde{Z}_{r}^{m},u_{r}^{m})|^{2}dr\right] \\
&  \leq\bar{C}(1+|x|^{2})\frac{1}{m},
\end{align*}%
\[%
\begin{array}
[c]{l}%
\mathbb{E}\left[  \left\vert X_{s}^{m}-X_{t_{i+1}^{m}}^{m}\right\vert
^{2}\right] \\
\leq2\mathbb{E}\left[  \left(  \int_{t_{i}^{m}}^{t_{i+1}^{m}}|b(r,X_{r}%
^{m},Y_{r}^{m},Z_{r}^{m},u_{r}^{m})|dr\right)  ^{2}+\int_{t_{i}^{m}}%
^{t_{i+1}^{m}}|\sigma(r,X_{r}^{m},u_{r}^{m})|^{2}dr\right] \\
\leq\frac{2T}{m}\mathbb{E}\left[  \int_{t_{i}^{m}}^{t_{i+1}^{m}}|b(r,X_{r}%
^{m},Y_{r}^{m},Z_{r}^{m},u_{r}^{m})|^{2}dr\right]  +\frac{2T}{m}%
\mathbb{E}\left[  \left(  L+L_{1}\sup\limits_{t_{i}^{m}\leq r\leq t_{i+1}^{m}%
}|X_{r}^{m}|\right)  ^{2}\right] \\
\leq\bar{C}(1+|x|^{2})\frac{1}{m},
\end{array}
\]%
\[%
\begin{array}
[c]{rl}
& \mathbb{E}\left[  \left\vert \tilde{Y}_{s}^{m}-W\left(  s,X_{s}^{m}\right)
\right\vert ^{2}\right] \\
& \leq\mathbb{E}\left[  \left\vert \tilde{Y}_{s}^{m}-\tilde{Y}_{t_{i+1}^{m}%
}^{m}+\tilde{Y}_{t_{i+1}^{m}}^{m}-W(t_{i+1}^{m},X_{t_{i+1}^{m}}^{m}%
)+W(t_{i+1}^{m},X_{t_{i+1}^{m}}^{m})-W\left(  s,X_{s}^{m}\right)  \right\vert
^{2}\right] \\
& \leq\bar{C}(1+|x|^{2})\frac{1}{m},
\end{array}
\]
where $\bar{C}$ is a constant which is independent of $m$. Thus%
\begin{equation}%
\begin{array}
[c]{rl}%
\mathbb{E}\left[  {\int}_{t}^{T}\left\vert \tilde{Y}_{s}^{m}-W\left(
s,X_{s}^{m}\right)  \right\vert ^{2}ds\right]  & \leq\sum_{i=0}^{m-1}%
\int_{t_{i}^{m}}^{t_{i+1}^{m}}\mathbb{E}\left[  \left\vert \tilde{Y}_{s}%
^{m}-W\left(  s,X_{s}^{m}\right)  \right\vert ^{2}\right]  ds\\
& \leq\bar{C}(1+|x|^{2})\frac{T}{m}.
\end{array}
\label{new-lx-16}%
\end{equation}
Then we obtain (\ref{new-lx-9}) by (\ref{new-lx-13}), (\ref{new-lx-14}) and
(\ref{new-lx-16}).

\textbf{Step 3.} Note that $(X^{t,x;u^{m}},Y^{t,x;u^{m}},Z^{t,x;u^{m}})$
satisfies the following FBSDE:%
\begin{equation}
\left\{
\begin{array}
[c]{rl}%
dX_{s}^{t,x;u^{m}}= & b(s,X_{s}^{t,x;u^{m}},Y_{s}^{t,x;u^{m}},Z_{s}%
^{t,x;u^{m}},u_{s}^{m})ds+\sigma(s,X_{s}^{t,x;u^{m}},u_{s}^{m})dB_{s},\\
dY_{s}^{t,x;u^{m}}= & -g(s,X_{s}^{t,x;u^{m}},Y_{s}^{t,x;u^{m}},Z_{s}%
^{t,x;u^{m}},u_{s}^{m})ds+Z_{s}^{t,x;u^{m}}dB_{s},\text{ }s\in\lbrack t,T],\\
X_{t}^{t,x;u^{m}}= & x,\ Y_{T}^{t,x;u^{m}}=\phi(X_{T}^{t,x;u^{m}}).
\end{array}
\right.  \label{new-lx-6}%
\end{equation}
Then, by Theorem 2.2 in \cite{Hu-JX} for FBSDEs (\ref{new-lx-8}) and
(\ref{new-lx-6}), we obtain%
\begin{equation}%
\begin{array}
[c]{l}%
\mathbb{E}\left[  \sup\limits_{t\leq s\leq T}\left(  |X_{s}^{m}-X_{s}%
^{t,x;u^{m}}|^{2}+|\tilde{Y}_{s}^{m}-Y_{s}^{t,x;u^{m}}|^{2}\right)  +{\int%
}_{t}^{T}|\tilde{Z}_{s}^{m}-Z_{s}^{t,x;u^{m}}|^{2}ds\right] \\
\leq\tilde{C}\mathbb{E}\left[  {\int}_{t}^{T}\left(  |\tilde{Y}_{s}^{m}%
-Y_{s}^{m}|^{2}+|\tilde{Z}_{s}^{m}-Z_{s}^{m}|^{2}\right)  ds\right]  ,
\end{array}
\label{new-lx-7}%
\end{equation}
where $\tilde{C}$ is a constant which is independent of $m$. Due to%
\[%
\begin{array}
[c]{rl}%
\left\vert Y_{s}^{t,x;u^{m}}-W\left(  s,X_{s}^{t,x;u^{m}}\right)  \right\vert
& \leq\left\vert Y_{s}^{t,x;u^{m}}-\tilde{Y}_{s}^{m}\right\vert +\left\vert
\tilde{Y}_{s}^{m}-W\left(  s,X_{s}^{m}\right)  \right\vert \\
& \ \ +\left\vert W\left(  s,X_{s}^{m}\right)  -W\left(  s,X_{s}^{t,x;u^{m}%
}\right)  \right\vert ,
\end{array}
\]
then, by (\ref{new-lx-9}), (\ref{new-lx-7}) and Lemma \ref{le-w-new}, we get%
\begin{equation}
\mathbb{E}\left[  {\int}_{t}^{T}\left\vert Y_{s}^{t,x;u^{m}}-W\left(
s,X_{s}^{t,x;u^{m}}\right)  \right\vert ^{2}ds\right]  +\left\vert
Y_{t}^{t,x;u^{m}}-W\left(  t,x\right)  \right\vert \rightarrow0\text{ as
}m\rightarrow\infty. \label{new-lx-5}%
\end{equation}
This completes the proof.
\end{proof}

We first give a uniqueness result when $\sigma$ is independent of $z$.

\begin{theorem}
\label{th-um1} Suppose $\sigma$ is independent of $z$, $\tilde{W}$ is a
viscosity solution to HJB equation (\ref{eq-hjb}); and one of the following
two conditions holds true:

\begin{description}
\item[(i)] Assumption \ref{assum-1} holds;

\item[(ii)] Assumptions \ref{assum-1} (i) and \ref{assm-mon} hold. Moreover,
\[
\tilde{A}\left(  t,x,y,z,u\right)  =\left(  -G^{\intercal}g\left(
t,x,y,z,u\right)  , Gb\left(  t,x,y,z,u\right)  , G\sigma( t,x,\tilde
{W}\left(  t,x\right)  ,u)\right)  ^{\intercal}%
\]
%\[
%\tilde{A}\left(  t,x,y,z,u\right)  =\left(
%\begin{array}
%[c]{c}%
%-G^{\intercal}g\left(  t,x,y,z,u\right) \\
%Gb\left(  t,x,y,z,u\right) \\
%G\sigma\left(  t,x,\tilde{W}\left(  t,x\right)  ,u\right)
%\end{array}
%\right)
%\]
satisfies Assumption \ref{assm-mon}.
\end{description}

\noindent Let $W$ be the value function. Furthermore, we assume that
$\tilde{W}$ is Lipschitz continuous in $x$. Then $W\leq\tilde{W}$.
\end{theorem}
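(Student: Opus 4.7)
The plan is to reduce to the setting of Theorem \ref{th-vis-uni} by freezing the $y$-argument of $\sigma$ using the candidate solution $\tilde W$ itself. Set $\tilde\sigma(t,x,u):=\sigma(t,x,\tilde W(t,x),u)$; since $\tilde W$ is Lipschitz in $x$ and $\sigma$ is assumed independent of $z$, the coefficient $\tilde\sigma$ is Lipschitz in $x$ and independent of $(y,z)$. We associate to $(b,\tilde\sigma,g,\phi)$ a new fully coupled controlled FBSDE
\begin{equation*}
\left\{
\begin{array}{rl}
d\hat X_s^{u}=&b(s,\hat X_s^{u},\hat Y_s^{u},\hat Z_s^{u},u_s)\,ds+\tilde\sigma(s,\hat X_s^{u},u_s)\,dB_s,\\
d\hat Y_s^{u}=&-g(s,\hat X_s^{u},\hat Y_s^{u},\hat Z_s^{u},u_s)\,ds+\hat Z_s^{u}\,dB_s,\\
\hat X_t^{u}=&x,\quad \hat Y_T^{u}=\phi(\hat X_T^{u}),
\end{array}
\right.
\end{equation*}
with value function $\hat W(t,x):=\underset{u\in\mathcal{U}[t,T]}{ess\inf}\hat Y_t^{u}$ and associated HJB equation whose Hamiltonian $\hat H$ is built from $(b,\tilde\sigma,g)$.

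The key algebraic identity is $\hat H(t,x,v,p,A,u)=H(t,x,v,p,A,u)$ whenever $v=\tilde W(t,x)$, since in both expressions the $\sigma$-argument reduces to $\sigma(t,x,\tilde W(t,x),u)$. In particular, if $\varphi\in C_b^{2,3}$ touches $\tilde W$ at $(t_0,x_0)$, then $\varphi(t_0,x_0)=\tilde W(t_0,x_0)$, and the viscosity inequality for (\ref{eq-hjb}) at the test point coincides with the viscosity inequality for the new HJB at the same point; hence $\tilde W$ is a Lipschitz viscosity solution of the new HJB equation. By Theorem \ref{th-vis} applied to the new system (together with Lemma \ref{le-w-new}), so is $\hat W$. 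Because $\tilde\sigma$ is independent of $y$ and $z$, Theorem \ref{th-vis-uni} forces $\hat W=\tilde W$.

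Next we invoke Proposition \ref{pr-um} for the new system --- its hypothesis is met since $\tilde\sigma$ does not depend on $(y,z)$ --- to obtain a sequence $u^m\in\mathcal{U}^{t}[t,T]$ such that, writing $(\hat X^m,\hat Y^m,\hat Z^m)$ for the new FBSDE solution with control $u^m$,
\begin{equation*}
\mathbb E\Big[\int_t^T|\hat Y_s^m-\tilde W(s,\hat X_s^m)|^2\,ds\Big]+|\hat Y_t^m-\tilde W(t,x)|\longrightarrow 0\qquad(m\to\infty).
\end{equation*}
We then compare $(\hat X^m,\hat Y^m,\hat Z^m)$ with the solution $(X^{t,x;u^m},Y^{t,x;u^m},Z^{t,x;u^m})$ of the original fully coupled FBSDE under the same control $u^m$: the two systems share drift, generator and terminal condition, while the original diffusion $\sigma(s,\hat X_s^m,\hat Y_s^m,u_s^m)$ differs from the new one $\sigma(s,\hat X_s^m,\tilde W(s,\hat X_s^m),u_s^m)$ by at most $L_2|\hat Y_s^m-\tilde W(s,\hat X_s^m)|$, which vanishes in $L^2(dt\otimes dP)$. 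The stability estimate of Theorem 2.2 in \cite{Hu-JX} then yields $|Y_t^{t,x;u^m}-\hat Y_t^m|\to 0$, so $Y_t^{t,x;u^m}\to\tilde W(t,x)$; since $W(t,x)\le Y_t^{t,x;u^m}$ by Proposition \ref{pro-new-1}, passing to the limit gives $W(t,x)\le\tilde W(t,x)$.

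The main obstacle is the final stability step: converting the $L^2$-closeness of $\hat Y^m$ to $\tilde W(\cdot,\hat X^m)$ into closeness of the two fully coupled FBSDE solutions under the same control requires the uniform $L^2$-bounds of Lemma \ref{est-initial} together with the contraction constants of Theorem 2.2 in \cite{Hu-JX} applied to an FBSDE whose $\sigma$-coefficient is perturbed only in the $L^2(dt\otimes dP)$ sense. A secondary technicality is checking that the new system still satisfies Assumption \ref{assum-1}(ii) under case (i); this holds because the frozen $\tilde\sigma$ has zero Lipschitz constants in $y$ and $z$, so the relevant $\tilde c_1$ does not exceed $c_1$ and the contraction factor only decreases. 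Under case (ii), the extra monotonicity assumption on $\tilde A$ placed in the statement is precisely what lets Proposition \ref{pr-um}(ii) apply to the new system.
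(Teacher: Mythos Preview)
Your proposal is correct and follows essentially the same route as the paper: freeze $\sigma$ at $\tilde W$ to obtain a new controlled system with diffusion $\tilde\sigma(t,x,u)=\sigma(t,x,\tilde W(t,x),u)$ independent of $(y,z)$, observe that $\tilde W$ solves the new HJB so that Theorems \ref{th-vis} and \ref{th-vis-uni} identify $\tilde W$ with the value function of the new system, then apply Proposition \ref{pr-um} to produce near-optimal controls $u^m$ and use the stability estimate of Theorem~2.2 in \cite{Hu-JX} to transfer $\hat Y_t^{m}\to\tilde W(t,x)$ to $Y_t^{t,x;u^m}\to\tilde W(t,x)$. Your discussion of the two technicalities (the stability step and the verification of the structural assumptions for the frozen system in cases (i) and (ii)) is slightly more explicit than the paper's, but the argument is the same.
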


\begin{proof}
We only prove the first case (the condition (i) holds). The proof for the
second case is similar. Consider the following HJB equation:%
\begin{equation}
\left\{
\begin{array}
[c]{l}%
\partial_{t}F(t,x)+\inf\limits_{u\in U}H(t,x,F(t,x),DF(t,x),D^{2}%
F(t,x),u)=0,\\
F(T,x)=\phi(x),
\end{array}
\right.  \label{new-lx-17}%
\end{equation}
where
\[%
\begin{array}
[c]{l}%
H(t,x,v,p,A,u)\\
=\frac{1}{2}\mathrm{tr}[\sigma\sigma^{\intercal}(t,x,\tilde{W}%
(t,x),u)A]+p^{\intercal}b(t,x,v,p^{\intercal}\sigma(t,x,\tilde{W
}(t,x),u),u)\\
\ \ +g(t,x,v,p^{\intercal}\sigma(t,x,\tilde{W}(t,x),u),u),\\
(t,x,v,p,A,u)\in\lbrack0,T]\times\mathbb{R}^{n}\times\mathbb{R}\times
\mathbb{R}^{n}\times\mathbb{S}^{n}\times U
\end{array}
\]
and $F\left(  \cdot,\cdot\right)  :[0,T]\times\mathbb{R}^{n}\rightarrow
\mathbb{R}. $ Note that $\tilde{\sigma}(t,x,u):=\sigma(t,x,\tilde{W}(t,x),u) $
is Lipschitz continuous in $x$. By the definition of viscosity solution, it is
easy to verify that $\tilde{W}$ is also a viscosity solution to HJB equation
(\ref{new-lx-17}). Since $\tilde{\sigma}$ is independent of $(y,z)$, by
Theorems \ref{th-vis} and \ref{th-vis-uni}, $\tilde{W}$ is the value function
of the following optimization problem:
\[
\underset{u(\cdot)\in\mathcal{U}^{t}[t,T]}{\inf}\bar{Y}_{t}^{t,x;u},
\]
where the controlled system is%
\begin{equation}
\left\{
\begin{array}
[c]{rl}%
d\bar{X}_{s}^{t,x;u}= & b(s,\bar{X}_{s}^{t,x;u},\bar{Y}_{s}^{t,x;u},\bar
{Z}_{s}^{t,x;u},u_{s})ds+\sigma(s,\bar{X}_{s}^{t,x;u},\tilde{W}(s,\bar{X}%
_{s}^{t,x;u}),u_{s})dB_{s},\\
d\bar{Y}_{s}^{t,x;u}= & -g(s,\bar{X}_{s}^{t,x;u},\bar{Y}_{s}^{t,x;u},\bar
{Z}_{s}^{t,x;u},u_{s})ds+\bar{Z}_{s}^{t,x;u}dB_{s},\;s\in\lbrack t,T],\\
\bar{X}_{t}^{t,x;u}= & x,\ \bar{Y}_{T}^{t,x;u}=\phi(\bar{X}_{T}^{t,x;u}).
\end{array}
\right.  \label{new-lx-18}%
\end{equation}
For each fixed $(t,x)\in\lbrack0,T)\times\mathbb{R}^{n}$, by Proposition
\ref{pr-um}, we can find a sequence $u^{m}\in\mathcal{U}^{t}[t,T]$ such that
\begin{equation}
\mathbb{E}\left[  {\int}_{t}^{T}\left\vert \bar{Y}_{s}^{t,x;u^{m}}-\tilde{W}
\left(  s,\bar{X}_{s}^{t,x;u^{m}}\right)  \right\vert ^{2}ds\right]
+\left\vert \bar{Y}_{t}^{t,x;u^{m}}-\tilde{W}\left(  t,x\right)  \right\vert
\rightarrow0\text{ as }m\rightarrow\infty. \label{new-le-21}%
\end{equation}
Consider the following FBSDE:%
\begin{equation}
\left\{
\begin{array}
[c]{rl}%
dX_{s}^{t,x;u^{m}}= & b(s,X_{s}^{t,x;u^{m}},Y_{s}^{t,x;u^{m}},Z_{s}%
^{t,x;u^{m}},u_{s}^{m})ds\\
& +\sigma(s,X_{s}^{t,x;u^{m}},Y_{s}^{t,x;u^{m}},u_{s}^{m})dB_{s},\\
dY_{s}^{t,x;u^{m}}= & -g(s,X_{s}^{t,x;u^{m}},Y_{s}^{t,x;u^{m}},Z_{s}%
^{t,x;u^{m}},u_{s}^{m})ds+Z_{s}^{t,x;u^{m}}dB_{s},\;s\in\lbrack t,T],\\
X_{t}^{t,x;u^{m}}= & x,\ Y_{T}^{t,x;u^{m}}=\phi(X_{T}^{t,x;u^{m}}).
\end{array}
\right.  \label{new-lx-19}%
\end{equation}
By Theorem 2.2 in \cite{Hu-JX} for FBSDEs (\ref{new-lx-18}) and
(\ref{new-lx-19}),%
\begin{equation}%
\begin{array}
[c]{l}%
|Y_{t}^{t,x;u^{m}}-\bar{Y}_{t}^{t,x;u^{m}}|\\
\leq C\left\{  \mathbb{E}\left[  \int_{t}^{T}\left\vert \sigma(s,\bar{X}%
_{s}^{t,x;u^{m}},\bar{Y}_{s}^{t,x;u^{m}},u_{s}^{m})-\sigma(s,\bar{X}%
_{s}^{t,x;u^{m}},\tilde{W}(s,\bar{X}_{s}^{t,x;u^{m}}),u_{s}^{m})\right\vert
^{2}ds\right]  \right\}  ^{\frac{1}{2}}\\
\leq C\left\{  \mathbb{E}\left[  {\int}_{t}^{T}\left\vert \bar{Y}%
_{s}^{t,x;u^{m}}-\tilde{W}\left(  s,\bar{X}_{s}^{t,x;u^{m}}\right)
\right\vert ^{2}ds\right]  \right\}  ^{\frac{1}{2}}.
\end{array}
\label{new-lx-20}%
\end{equation}
Since $Y_{t}^{t,x;u^{m}}\geq W(t,x)$ for any $m\geq1$, we get $W(t,x)\leq
\tilde{W}(t,x)$ by (\ref{new-le-21}) and (\ref{new-lx-20}).
\end{proof}

Now we study the case in which $\sigma$ is dependent on $y$ and $z$.

\begin{theorem}
\label{th-um2} Suppose one of the following two conditions holds true:

\begin{description}
\item[(i)] Assumptions \ref{assum-1} and \ref{assum-l3} hold;

\item[(ii)] Assumptions \ref{assum-1} (i), \ref{assum-l3} and \ref{assm-mon} hold.
\end{description}

\noindent Let $W$ be the value function and $\tilde{W}$ be a viscosity
solution to HJB equation (\ref{eq-hjb}). Furthermore, we assume that
$\tilde{W}$ is Lipschitz continuous in $(t,x)$, $D\tilde{W}$ is Lipschitz
continuous in $x$ and $||D\tilde{W}||_{\infty}L_{3}<1$. Then $W\leq\tilde{W}$.
\end{theorem}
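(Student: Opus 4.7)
The plan is to mirror the strategy of Theorem \ref{th-um1}, but now both the $y$- and the $z$-slots of $\sigma$ must be frozen; the additional smoothness of $\tilde{W}$ is exactly what makes this possible. First I would define $\bar{\sigma}(t,x,u)$ as the unique fixed point of the map $\eta\mapsto \sigma(t,x,\tilde{W}(t,x),D\tilde{W}(t,x)^{\intercal}\eta,u)$; this is a contraction in $\eta$ because $\Vert D\tilde{W}\Vert_{\infty}L_{3}<1$, and the Lipschitz regularity of $\tilde{W}$ and $D\tilde{W}$ in $x$ transfers to Lipschitz continuity of $\bar{\sigma}$ in $x$ exactly as in Lemma~\ref{le-h}. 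I then consider the modified HJB equation obtained from (\ref{eq-hjb}) by replacing $\sigma$ with $\bar{\sigma}$; since $\bar{\sigma}$ no longer depends on $(y,z)$, the algebraic equation in (\ref{def-G}) collapses to $V=p^{\intercal}\bar{\sigma}$.

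The next step is to verify that $\tilde{W}$ is itself a viscosity solution of this modified HJB. Here $\tilde{W}\in C^{1}$ in $x$ is crucial: for any test function $\varphi$ touching $\tilde{W}$ at $(t_{0},x_{0})$, one has $D\varphi(t_{0},x_{0})=D\tilde{W}(t_{0},x_{0})$, and so by uniqueness of the original algebraic equation $\bar{\sigma}(t_{0},x_{0},u)=\sigma(t_{0},x_{0},\varphi,V,u)$ with $V$ the solution of $V=D\varphi^{\intercal}\sigma(t_{0},x_{0},\varphi,V,u)$; hence the original and modified Hamiltonians agree on $\varphi$ at $(t_{0},x_{0})$, and the viscosity inequalities transfer. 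Because $\bar{\sigma}$ is independent of $(y,z)$ and $\tilde{W}$ is Lipschitz in $x$, Theorem~\ref{th-vis-uni} identifies $\tilde{W}$ with the value function $\bar{W}$ of the modified fully coupled control problem. Applying Proposition~\ref{pr-um} to this modified problem produces controls $u^{m}\in\mathcal{U}^{t}[t,T]$ whose modified FBSDE solutions $(\bar{X}^{m},\bar{Y}^{m},\bar{Z}^{m})$ satisfy
\[
\mathbb{E}\!\left[\int_{t}^{T}|\bar{Y}_{s}^{m}-\tilde{W}(s,\bar{X}_{s}^{m})|^{2}\,ds\right]+|\bar{Y}_{t}^{m}-\tilde{W}(t,x)|\rightarrow 0.
\]

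I would then compare with the original FBSDE $(X^{t,x;u^{m}},Y^{t,x;u^{m}},Z^{t,x;u^{m}})$ driven by the same $u^{m}$. The stability estimate (Theorem~2.2 in \cite{Hu-JX}) combined with the Lipschitz property of $\sigma$ gives
\[
|Y_{t}^{t,x;u^{m}}-\bar{Y}_{t}^{m}|^{2}\le C(I_{m}+J_{m}),
\]
with $I_{m}=\mathbb{E}[\int_{t}^{T}|\bar{Y}^{m}-\tilde{W}(s,\bar{X}^{m})|^{2}ds]\rightarrow 0$ already controlled, and $J_{m}=\mathbb{E}[\int_{t}^{T}|\bar{Z}^{m}-D\tilde{W}(s,\bar{X}^{m})^{\intercal}\bar{\sigma}(s,\bar{X}^{m},u^{m})|^{2}ds]$ the remaining term to be bounded.

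To handle $J_{m}$ I would apply a generalized It\^{o} formula to $\tilde{W}(s,\bar{X}_{s}^{m})$, justified by mollifying $\tilde{W}$ in $x$ (using that $D\tilde{W}$ is Lipschitz in $x$, so $D^{2}\tilde{W}$ exists a.e.\ and is essentially bounded) and passing to the limit. Setting $\Xi_{s}=\bar{Y}_{s}^{m}-\tilde{W}(s,\bar{X}_{s}^{m})$, $H_{s}=\bar{Z}_{s}^{m}-D\tilde{W}(s,\bar{X}_{s}^{m})^{\intercal}\bar{\sigma}(s,\bar{X}_{s}^{m},u_{s}^{m})$ and $R_{s}=g+\partial_{t}\tilde{W}+D\tilde{W}^{\intercal}b+\tfrac{1}{2}\mathrm{tr}[\bar{\sigma}\bar{\sigma}^{\intercal}D^{2}\tilde{W}]$ (everything evaluated along the modified FBSDE), one has $d\Xi_{s}=-R_{s}ds+H_{s}dB_{s}$ with $\Xi_{T}=\phi(\bar{X}_{T}^{m})-\tilde{W}(T,\bar{X}_{T}^{m})=0$, so It\^{o} applied to $\Xi_{s}^{2}$ yields
\[
|\Xi_{t}|^{2}+\mathbb{E}\!\left[\int_{t}^{T}|H_{s}|^{2}ds\right]=2\,\mathbb{E}\!\left[\int_{t}^{T}\Xi_{s}R_{s}\,ds\right].
\]
Since $\Vert R\Vert_{L^{2}(dt\otimes dP)}$ is uniformly bounded in $m$ (the coefficients and derivatives are essentially bounded while the solutions are $L^{2}$-bounded) and $\Vert\Xi\Vert_{L^{2}(dt\otimes dP)}\to 0$ by the previous step, Cauchy--Schwarz gives $J_{m}\to 0$. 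The triangle inequality then delivers $Y_{t}^{t,x;u^{m}}\to\tilde{W}(t,x)$, and combining with the lower bound $Y_{t}^{t,x;u^{m}}\ge W(t,x)$ concludes $W(t,x)\le\tilde{W}(t,x)$. The main obstacle is precisely the generalized It\^{o} step: $\tilde{W}$ is only $C^{1,1}$ in $x$ along a potentially degenerate It\^{o} process, so the second-order term must be controlled by mollification and a careful limit argument rather than by the classical It\^{o} rule.
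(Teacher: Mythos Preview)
Your proposal is correct and follows essentially the same route as the paper: freeze $\sigma$ via the fixed point built from $\tilde{W}$ and $D\tilde{W}$, verify that $\tilde{W}$ solves the resulting HJB with $(y,z)$-independent diffusion, identify $\tilde{W}$ with the modified value function via Theorem~\ref{th-vis-uni}, extract $\varepsilon$-optimal controls by Proposition~\ref{pr-um}, and close with a mollification-plus-It\^{o} argument on $\tilde{W}(s,\bar{X}_{s}^{m})$ to control the $\bar{Z}^{m}-\tilde{V}$ term.

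The one structural difference is that the paper goes a step further and also freezes $b$ at $(\tilde{W},\tilde{V})$, so that the forward equation in the auxiliary system (\ref{new-lx-22}) is completely decoupled from $(\bar{Y},\bar{Z})$. This buys two simplifications you would otherwise have to work for. First, the uniform bound on $\Vert R\Vert_{L^{2}}$ hides an $L^{4}$-estimate on $\bar{X}^{m}$ (because $\mathrm{tr}[\bar{\sigma}\bar{\sigma}^{\intercal}D^{2}\tilde{W}]$ grows like $|\bar{X}^{m}|^{2}$); with a decoupled forward SDE this is immediate for all $p$, whereas in your coupled version it requires the $L^{p}$ FBSDE theory. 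Second, $\bar{\sigma}$ is only \emph{locally} Lipschitz in $x$ (as the paper notes in (\ref{new-lx-30})); for a decoupled forward SDE with linear growth this is standard, while in your version the coupled FBSDE must be shown well-posed with a locally Lipschitz diffusion, which is less routine. Neither point is a genuine gap---both can be handled---but freezing $b$ as well is the cleaner choice.
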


\begin{proof}
We only prove the first case (the condition (i) holds). The proof for the
second case is similar. Consider the following HJB equation:%
\begin{equation}
\label{new-lx-21}\partial_{t}F(t,x)+\inf\limits_{u\in U}%
H(t,x,F(t,x),DF(t,x),D^{2}F(t,x),u)=0,\ F(T,x)=\phi(x),
\end{equation}
where%
\[%
\begin{array}
[c]{l}%
H(t,x,v,p,A,u)\\
=\frac{1}{2}\mathrm{tr}[\sigma\sigma^{\intercal}(t,x,\tilde{W}(t,x),\tilde{V
}(t,x,u),u)A]+p^{\intercal}b(t,x,\tilde{W}(t,x),\tilde{V}(t,x,u),u)\\
\text{ \ \ }+g(t,x,v,p^{\intercal}\sigma(t,x,\tilde{W}(t,x),\tilde{V
}(t,x,u),u),u),\\
\tilde{V}(t,x,u)=D\tilde{W}(t,x)^{\intercal}\sigma(t,x,\tilde{W }%
(t,x),\tilde{V}(t,x,u),u),\\
(t,x,v,p,A,u)\in\lbrack0,T]\times\mathbb{R}^{n}\times\mathbb{R}\times
\mathbb{R}^{n}\times\mathbb{S}^{n}\times U
\end{array}
\]
and $F\left(  \cdot,\cdot\right)  :[0,T]\times\mathbb{R}^{n}\rightarrow
\mathbb{R}. $

Recall that for given $\tilde{W}$, there exists a unique solution $\tilde{V
}(t,x,u)$ to the above algebra equation by Lemma \ref{le-h}.

Note that
\[%
\begin{array}
[c]{rl}%
\tilde{b}(t,x,u):= b(t,x,\tilde{W}(t,x),\tilde{V}(t,x,u),u),\ \tilde{\sigma
}(t,x,u):= \sigma(t,x,\tilde{W}(t,x),\tilde{V}(t,x,u),u) &
\end{array}
\]
satisfy the following conditions:%
\begin{equation}%
\begin{array}
[c]{l}%
| \tilde{b}(t,x,u)| +\left\vert \tilde{\sigma}(t,x,u)\right\vert \leq
C(1+\left\vert x\right\vert ),\\
| \tilde{b}(t,x,u)-\tilde{b}(t,x^{\prime},u)| +\left\vert \tilde{\sigma
}(t,x,u)-\tilde{\sigma}(t,x^{\prime},u)\right\vert \leq C(1+\left\vert
x\right\vert +\left\vert x^{\prime}\right\vert )|x-x^{\prime}|.
\end{array}
\label{new-lx-30}%
\end{equation}
By the definition of viscosity solution, $\tilde{W}$ is also a viscosity
solution to HJB equation (\ref{new-lx-21}). Consider the following controlled
system:%
\begin{equation}
\left\{
\begin{array}
[c]{rl}%
d\bar{X}_{s}^{t,x;u}= & b(s,\bar{X}_{s}^{t,x;u},\tilde{W}(s,\bar{X}%
_{s}^{t,x;u}),\tilde{V}(s,\bar{X}_{s}^{t,x;u},u_{s}),u_{s})ds\\
& +\sigma(s,\bar{X}_{s}^{t,x;u},\tilde{W}(s,\bar{X}_{s}^{t,x;u}),\tilde{V
}(s,\bar{X}_{s}^{t,x;u},u_{s}),u_{s})dB_{s},\\
d\bar{Y}_{s}^{t,x;u}= & -g(s,\bar{X}_{s}^{t,x;u},\bar{Y}_{s}^{t,x;u},\bar
{Z}_{s}^{t,x;u},u_{s})ds+\bar{Z}_{s}^{t,x;u}dB_{s},\;s\in\lbrack t,T],\\
\bar{X}_{t}^{t,x;u}= & x,\ \bar{Y}_{T}^{t,x;u}=\phi(\bar{X}_{T}^{t,x;u}).
\end{array}
\right.  \label{new-lx-22}%
\end{equation}
By Proposition 3.28 in \cite{Pardoux-book}, the FBSDE (\ref{new-lx-22}) has a
unique solution $(\bar{X}^{t,x;u}, \bar{Y}^{t,x;u},\newline\bar{Z}^{t,x;u})\in
L_{\mathcal{F}}^{p}(\Omega;C([t,T];\mathbb{R}^{n}))\times L_{\mathcal{F}}%
^{p}(\Omega;C([t,T];\mathbb{R}))\times L_{\mathcal{F}}^{2,p}(t,T;\mathbb{R}%
^{1\mathbb{\times}d})$ for any $p\geq2$. Since $\tilde{b}$ and $\tilde{\sigma
}$ are independent of $(y,z)$, we can obtain that $\tilde{W}$ is the value
function of the above controlled system (\ref{new-lx-22}). For each fixed
$(t,x)\in\lbrack0,T)\times\mathbb{R}^{n}$, by Proposition \ref{pr-um}, we can
find a sequence $u^{m}\in\mathcal{U}^{t}[t,T]$ such that
\begin{equation}
\mathbb{E}\left[  {\int}_{t}^{T}\left\vert \bar{Y}_{s}^{t,x;u^{m}}-\tilde{W
}\left(  s,\bar{X}_{s}^{t,x;u^{m}}\right)  \right\vert ^{2}ds\right]
+\left\vert \bar{Y}_{t}^{t,x;u^{m}}-\tilde{W}\left(  t,x\right)  \right\vert
\rightarrow0\text{ as }m\rightarrow\infty. \label{new-lx-23}%
\end{equation}

Let $\vartheta(t,x):\mathbb{R\times R}^{n}\rightarrow\mathbb{R}$\ be a
non-negative smooth function such that its support is included in the unit
ball and $\int_{\mathbb{R\times R}^{n}}\vartheta\left(  t,x\right)  dxdt=1$.
For Lipschitz functions $\tilde{W}:[0,T]\times\mathbb{R}^{n}\rightarrow
\mathbb{R}$, we set for $(t,x)\in\mathbb{R\times R}^{n}\text{, }\epsilon>0,$
\[
\tilde{W}_{\epsilon}\left(  t,x\right)  =\epsilon^{-(n+1)}\int%
_{\mathbb{R\times R}^{n}}\tilde{W}\left(  \left[  \left(  t-t^{\prime}\right)
\vee0\right]  \wedge T,x-x^{\prime}\right)  \vartheta\left(  \epsilon
^{-1}t^{\prime},\epsilon^{-1}x^{\prime}\right)  dx^{\prime}dt^{\prime}.
\]
Then, it is easily to verify that%
\begin{align*}
\partial_{t}\tilde{W}_{\epsilon}\left(  t,x\right)   &  =\epsilon^{-(n+1)}%
\int_{\mathbb{R\times R}^{n}}\partial_{t}\tilde{W}\left(  \left[  \left(
t-t^{\prime}\right)  \vee0\right]  \wedge T,x-x^{\prime}\right)
\vartheta\left(  \epsilon^{-1}t^{\prime},\epsilon^{-1}x^{\prime}\right)
dx^{\prime}dt^{\prime},\\
D\tilde{W}_{\epsilon}\left(  t,x\right)   &  =\epsilon^{-(n+1)}\int%
_{\mathbb{R\times R}^{n}}D\tilde{W}\left(  \left[  \left(  t-t^{\prime
}\right)  \vee0\right]  \wedge T,x-x^{\prime}\right)  \vartheta\left(
\epsilon^{-1}t^{\prime},\epsilon^{-1}x^{\prime}\right)  dx^{\prime}dt^{\prime
},
\end{align*}
where $\partial_{t}\tilde{W}$ is defined almost everywhere.

Thus we obtain that $||\partial_{t}\tilde{W}_{\epsilon}||_{\infty}\leq
L_{\tilde{W}}$, $||D\tilde{W}_{\epsilon}||_{\infty}\leq L_{D\tilde{W}}$,
$\tilde{W}_{\epsilon}\rightarrow\tilde{W}$ and $D\tilde{W}_{\epsilon
}\rightarrow D\tilde{W}$ pointwisely as $\epsilon\rightarrow0$, where
$L_{\tilde{W}}$ is the Lipschitz constant of $\tilde{W}$ with respect to $t$
and $L_{D\tilde{W}}$ is the Lipschitz constant of $D\tilde{W}$ with respect to
$x$. Set \ $Y_{s}^{\epsilon}=\tilde{W}_{\epsilon}(s,\bar{X}_{s}^{t,x;u^{m}%
}),$
\[
Z_{s}^{\epsilon}=(D\tilde{W}_{\epsilon}(s,\bar{X}_{s}^{t,x;u^{m}}%
))^{\intercal}\sigma(s,\bar{X}_{s}^{t,x;u^{m}},\tilde{W}(s,\bar{X}%
_{s}^{t,x;u^{m}}),\tilde{V}(s,\bar{X}_{s}^{t,x;u^{m}},u_{s}^{m}),u_{s}^{m}).
\]
Applying It\^{o}'s formula to $\tilde{W}_{\epsilon}(s,\bar{X}_{s}^{t,x;u^{m}%
})$ on $[t,T]$, we have
\begin{equation}
\left\{
\begin{array}
[c]{l}%
dY_{s}^{\epsilon}\\
= \left[  \left(  D\tilde{W}_{\epsilon}(s,\bar{X}_{s}^{t,x;u^{m}})\right)
^{\intercal}b(s,\bar{X}_{s}^{t,x;u^{m}},\tilde{W}(s,\bar{X}_{s}^{t,x;u^{m}%
}),\tilde{V}(s,\bar{X}_{s}^{t,x;u^{m}},u_{s}^{m}),u_{s}^{m})\right. \\
\ \ +\frac{1}{2}\mathrm{tr}\left(  \sigma\sigma^{\intercal}(s,\bar{X}%
_{s}^{t,x;u^{m}},\tilde{W}(s,\bar{X}_{s}^{t,x;u^{m}}),\tilde{V}(s,\bar{X}%
_{s}^{t,x;u^{m}},u_{s}^{m}),u_{s}^{m})D^{2}\tilde{W}_{\epsilon}(s,\bar{X}%
_{s}^{t,x;u^{m}})\right) \\
\ \ \left.  + \partial_{t}\tilde{W}_{\epsilon}(s,\bar{X}_{s}^{t,x;u^{m}%
})\right]  ds+Z_{s}^{\epsilon}dB_{s},\\
Y_{T}^{\epsilon}= \phi(\bar{X}_{T}^{t,x;u^{m}}),\;s\in\lbrack t,T].
\end{array}
\right.  \label{new-lx-26}%
\end{equation}
Applying It\^{o}'s formula to $\left\vert \bar{Y}_{s}^{t,x;u^{m}}%
-Y_{s}^{\epsilon}\right\vert ^{2}$ on $[t,T]$,%
\begin{equation}%
\begin{array}
[c]{rl}%
\mathbb{E}\left[  {\int}_{t}^{T}\left\vert \bar{Z}_{s}^{t,x;u^{m}}%
-Z_{s}^{\epsilon}\right\vert ^{2}ds\right]  & \leq2\mathbb{E}\left[  {\int%
}_{t}^{T}\left\vert \bar{Y}_{s}^{t,x;u^{m}}-Y_{s}^{\epsilon}\right\vert
I_{s}^{m}ds\right] \\
& \leq2\left\{  \mathbb{E}\left[  {\int}_{t}^{T}\left\vert \bar{Y}%
_{s}^{t,x;u^{m}}-Y_{s}^{\epsilon}\right\vert ^{2}ds\right]  \right\}
^{\frac{1}{2}}\left\{  \mathbb{E}\left[  {\int}_{t}^{T}|I_{s}^{m}%
|^{2}ds\right]  \right\}  ^{\frac{1}{2}},
\end{array}
\label{new-lx-27}%
\end{equation}
where%
\begin{equation}%
\begin{array}
[c]{l}%
I_{s}^{m}\\
= \left\vert g(s,\bar{X}_{s}^{t,x;u^{m}},\bar{Y}_{s}^{t,x;u^{m}},\bar{Z}%
_{s}^{t,x;u^{m}},u_{s}^{m})+\partial_{t}\tilde{W}_{\epsilon}(s,\bar{X}%
_{s}^{t,x;u^{m}})\right. \\
\text{\ \ \ } +\left(  D\tilde{W}_{\epsilon}(s,\bar{X}_{s}^{t,x;u^{m}%
})\right)  ^{\intercal}b(s,\bar{X}_{s}^{t,x;u^{m}},\tilde{W}(s,\bar{X}%
_{s}^{t,x;u^{m}}),\tilde{V}(s,\bar{X}_{s}^{t,x;u^{m}},u_{s}^{m}),u_{s}^{m})\\
\text{\ \ \ }\left.  +\frac{1}{2}\mathrm{tr}\left(  \sigma\sigma^{\intercal
}(s,\bar{X}_{s}^{t,x;u^{m}},\tilde{W}(s,\bar{X}_{s}^{t,x;u^{m}}),\tilde
{V}(s,\bar{X}_{s}^{t,x;u^{m}},u_{s}^{m}),u_{s}^{m})D^{2}\tilde{W}_{\epsilon
}(s,\bar{X}_{s}^{t,x;u^{m}})\right)  \right\vert \\
\leq C\left(  1+\left\vert \bar{X}_{s}^{t,x;u^{m}}\right\vert ^{2}+\left\vert
\bar{Y}_{s}^{t,x;u^{m}}\right\vert +\left\vert \bar{Z}_{s}^{t,x;u^{m}%
}\right\vert \right)  .
\end{array}
\label{new-ism}%
\end{equation}
By standard estimate for decoupled FBSDE (\ref{new-lx-22}), we obtain\ $\sup
_{m\geq1}\mathbb{E}[ {\int}_{t}^{T}|I_{s}^{m}|^{2}ds] \leq C\left(
1+\left\vert x\right\vert ^{4}\right)  . $ By dominate convergence theorem, we
have
\[
\mathbb{E}\left[  {\int}_{t}^{T}\left(  \left\vert Y_{s}^{\epsilon}-\tilde
{W}\left(  s,\bar{X}_{s}^{t,x;u^{m}}\right)  \right\vert ^{2}+\left\vert
Z_{s}^{\epsilon}-\tilde{V}(s,\bar{X}_{s}^{t,x;u^{m}},u_{s}^{m})\right\vert
^{2}\right)  ds\right]  \rightarrow0
\]
as $\epsilon\rightarrow0$. Then we deduce%
\begin{equation}%
\begin{array}
[c]{rl}
& \mathbb{E}\left[  {\int}_{t}^{T}\left\vert \bar{Z}_{s}^{t,x;u^{m}}-\tilde
{V}(s,\bar{X}_{s}^{t,x;u^{m}},u_{s}^{m})\right\vert ^{2}ds\right] \\
& \leq C\left(  1+\left\vert x\right\vert ^{2}\right)  \left\{  \mathbb{E}%
\left[  {\int}_{t}^{T}\left\vert \bar{Y}_{s}^{t,x;u^{m}}-\tilde{W}\left(
s,\bar{X}_{s}^{t,x;u^{m}}\right)  \right\vert ^{2}ds\right]  \right\}
^{\frac{1}{2}}%
\end{array}
\label{new-lx-29}%
\end{equation}
by taking $\epsilon\rightarrow0$ in (\ref{new-lx-27}). Consider the following
FBSDE:%
\begin{equation}
\left\{
\begin{array}
[c]{rl}%
dX_{s}^{t,x;u^{m}}= & b(s,X_{s}^{t,x;u^{m}},Y_{s}^{t,x;u^{m}},Z_{s}%
^{t,x;u^{m}},u_{s}^{m})ds\\
& +\sigma(s,X_{s}^{t,x;u^{m}},Y_{s}^{t,x;u^{m}},Z_{s}^{t,x,u^{m}},u_{s}%
^{m})dB_{s},\\
dY_{s}^{t,x;u^{m}}= & -g(s,X_{s}^{t,x;u^{m}},Y_{s}^{t,x;u^{m}},Z_{s}%
^{t,x;u^{m}},u_{s}^{m})ds+Z_{s}^{t,x;u^{m}}dB_{s},\;s\in\lbrack t,T],\\
X_{t}^{t,x;u^{m}}= & x,\ Y_{T}^{t,x;u^{m}}=\phi(X_{T}^{t,x;u^{m}}).
\end{array}
\right.  \label{new-lx-24}%
\end{equation}
By Theorem 2.2 in \cite{Hu-JX} for FBSDEs (\ref{new-lx-22}) and
(\ref{new-lx-24}), we obtain%
\begin{equation}%
\begin{array}
[c]{l}%
|Y_{t}^{t,x;u^{m}}-\bar{Y}_{t}^{t,x;u^{m}}|\\
\leq C\left\{  \mathbb{E}\left[  \left(  \displaystyle \int_{t}^{T}\left\vert
b(s,\bar{X}_{s}^{t,x;u^{m}},\bar{Y}_{s}^{t,x;u^{m}},\bar{Z}_{s}^{t,x;u^{m}%
},u_{s}^{m})\right.  \right.  \right.  \right. \\
\text{ \ \ \ \ \ \ \ \ }\left.  \left.  -b(s,\bar{X}_{s}^{t,x;u^{m}},\tilde
{W}(s,\bar{X}_{s}^{t,x;u^{m}}),\tilde{V}(s,\bar{X}_{s}^{t,x;u^{m}},u_{s}%
^{m}),u_{s}^{m})\right\vert ds\right)  ^{2}\\
\text{ \ \ } +\displaystyle\int_{t}^{T}\left\vert \sigma(s,\bar{X}%
_{s}^{t,x;u^{m}},\bar{Y}_{s}^{t,x;u^{m}},\bar{Z}_{s}^{t,x;u^{m}},u_{s}%
^{m})\right. \\
\text{ \ \ \ \ \ \ \ \ } \left.  \left.  \left.  -\sigma(s,\bar{X}%
_{s}^{t,x;u^{m}},\tilde{W}(s,\bar{X}_{s}^{t,x;u^{m}}),\tilde{V}(s,\bar{X}%
_{s}^{t,x;u^{m}},u_{s}^{m}),u_{s}^{m})\right\vert ^{2}ds\right]  \right\}
^{\frac{1}{2}}\\
\leq C\left\{  \mathbb{E}\left[  \displaystyle {\int}_{t}^{T}\left(
\left\vert \bar{Y}_{s}^{t,x;u^{m}}-\tilde{W}\left(  s,\bar{X}_{s}^{t,x;u^{m}%
}\right)  \right\vert ^{2}\right.  \right.  \right. \\
\text{ \ \ \ \ \ \ \ \ \ \ \ \ \ \ \ \ } \left.  \left.  \left.  +\left\vert
\bar{Z}_{s}^{t,x;u^{m}}-\tilde{V}(s,\bar{X}_{s}^{t,x;u^{m}},u_{s}%
^{m})\right\vert ^{2}\right)  ds\right]  \right\}  ^{\frac{1}{2}}.
\end{array}
\label{new-lx-25}%
\end{equation}
Since $Y_{t}^{t,x;u^{m}}\geq W(t,x)$ for any $m\geq1$, we get $W(t,x)\leq
\tilde{W}(t,x)$ by (\ref{new-lx-23}), (\ref{new-lx-29}) and (\ref{new-lx-25}).
\end{proof}

\begin{remark}
Note that $\tilde{b}$ and $\tilde{\sigma}$ are only local Lipschitz continuous
in $x$. Under this condition (\ref{new-lx-30}), we can still prove that the
value function of the controlled system (\ref{new-lx-22}) is the viscosity
solution to HJB equation (\ref{new-lx-21}) by using the method in
\cite{Peng-lecture}. The uniqueness of the solution to HJB equation
(\ref{new-lx-21}) can be still obtained similarly as in
\cite{Baeles-BP,Buckdahn-Li}.
\end{remark}

\begin{remark}
If $b$ and $\sigma$ are independent of $z$, we only need to suppose that
$\tilde{W}$ is Lipschitz continuous in $x$ in Theorem \ref{th-um2}.
\end{remark}

\begin{remark}
In Theorem \ref{th-um2}, if $\partial_{t}\tilde{W}\in C([0,T]\times
\mathbb{R}^{n})$, then we do not need the assumption that $\tilde{W}$ is
Lipschitz continuous in $t$. In this case, by the definition of viscosity
solution, we can deduce that$|\partial_{t}\tilde{W}(t,x)| \leq C(1+|x|^{2}). $
Thus the inequality (\ref{new-ism}) still holds. Following the same steps as
in the above proof, we can obtain the same result.
\end{remark}

Now we study the case in which the coefficients of the controlled system $b$,
$\sigma$ and $g$ are independent of control variable $u$. It is obviously that
for this case the corresponding HJB equation (\ref{eq-hjb}) degenerates to a
semilinear parabolic equation with an algebra equation.

\begin{theorem}
\label{th-uni-pde}Suppose $b$, $\sigma$ and $g$ are independent of control
variable $u$, $\sigma$ is independent of $z$, $\tilde{W}$ is a viscosity
solution to HJB equation (\ref{eq-hjb}); and one of the following two
conditions holds true:

\begin{description}
\item[(i)] Assumption \ref{assum-1} holds;

\item[(ii)] Assumptions \ref{assum-1} (i) and \ref{assm-mon} hold. Moreover,
\[
\tilde{A}\left(  t,x,y,z\right)  =\left(  -G^{\intercal}g\left(
t,x,y,z\right)  , Gb\left(  t,x,y,z\right)  , G\sigma\left(  t,x,\tilde
{W}\left(  t,x\right)  \right)  \right)  ^{\intercal}
\]
%\[
%\tilde{A}\left(  t,x,y,z\right)  =\left(
%\begin{array}
%[c]{c}%
%-G^{\intercal}g\left(  t,x,y,z\right) \\
%Gb\left(  t,x,y,z\right) \\
%G\sigma\left(  t,x,\tilde{W}\left(  t,x\right)  \right)
%\end{array}
%\right)
%\]
satisfies Assumption \ref{assm-mon}.
\end{description}

\noindent Let $W$ be the value function. Furthermore, we assume that
$\tilde{W}$ is Lipschitz continuous in $x$. Then $W=\tilde{W}$.
\end{theorem}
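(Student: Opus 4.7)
The plan is to strengthen the inequality $W\leq\tilde W$ provided by Theorem \ref{th-um1} to an equality by exploiting that, once $b$, $\sigma$, $g$ carry no $u$-dependence, both value functions degenerate to the $Y$-component of an FBSDE and, for the auxiliary system appearing in the proof of Theorem \ref{th-um1}, the decoupling-field identity $\bar Y_s=\tilde W(s,\bar X_s)$ forces that auxiliary system to coincide with the original one. Concretely, Theorem \ref{th-um1} applies under the present hypotheses and already yields $W\leq\tilde W$, so only $W\geq\tilde W$ remains. Since the coefficients do not depend on $u$, the state FBSDE \eqref{state-eq} has a unique $u$-independent solution $(X^{t,x},Y^{t,x},Z^{t,x})$ and $W(t,x)=Y_t^{t,x}$.

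For the reverse bound, I would first reproduce the opening move of the proof of Theorem \ref{th-um1}: $\tilde W$ is also a viscosity solution of the modified HJB equation \eqref{new-lx-17} with $\tilde\sigma(s,x):=\sigma(s,x,\tilde W(s,x))$; because $\tilde\sigma$ no longer depends on $(y,z)$, Theorems \ref{th-vis} and \ref{th-vis-uni} identify $\tilde W$ with the value function of the associated control system \eqref{new-lx-18}. In the present $u$-independent setting that system is the fully coupled FBSDE
\[
\left\{\begin{array}{l}
d\bar X_s = b(s,\bar X_s,\bar Y_s,\bar Z_s)\,ds+\sigma(s,\bar X_s,\tilde W(s,\bar X_s))\,dB_s,\\
d\bar Y_s = -g(s,\bar X_s,\bar Y_s,\bar Z_s)\,ds+\bar Z_s\,dB_s,\\
\bar X_t=x,\ \bar Y_T=\phi(\bar X_T),
\end{array}\right.
\]
whose unique solvability follows from Theorem 2.2 of \cite{Hu-JX} (the Lipschitz constants of $\tilde\sigma$ with respect to $y$ and $z$ vanish, and its Lipschitz constant in $x$ is bounded by that of $\sigma$ plus a multiple of the Lipschitz constant of $\tilde W$), and therefore $\tilde W(t,x)=\bar Y_t^{t,x}$.

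The decisive step is then to apply Proposition \ref{pr-um} to this modified system. Its hypotheses are met since $\tilde\sigma$ is independent of $(y,z)$; and because the modified coefficients (and $\tilde W$ itself) do not depend on $u$, the minimizing sequence the proposition produces is trivial, giving the exact identity
\[
\bar Y_s^{t,x}=\tilde W(s,\bar X_s^{t,x})\quad\text{for all }s\in[t,T],\ P\text{-a.s.},
\]
where the passage from $L^2$-almost-everywhere to everywhere uses the pathwise continuity of both sides. Substituting this identity back into the diffusion coefficient yields $\sigma(s,\bar X_s,\tilde W(s,\bar X_s))=\sigma(s,\bar X_s,\bar Y_s)$, so the modified FBSDE is in fact the original FBSDE \eqref{state-eq} with $u$ suppressed; by uniqueness $(\bar X,\bar Y,\bar Z)=(X,Y,Z)$, whence $\tilde W(t,x)=\bar Y_t^{t,x}=Y_t^{t,x}=W(t,x)$. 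The hard part is precisely this decoupling-field identity: without it one can only compare the two FBSDEs at the coefficient level up to a discrepancy $|\sigma(s,\bar X_s,\tilde W(s,\bar X_s))-\sigma(s,\bar X_s,\bar Y_s)|$, which is a priori non-zero. Proposition \ref{pr-um} is the mechanism that closes this gap exactly rather than asymptotically; everything else is a direct consequence of FBSDE uniqueness.
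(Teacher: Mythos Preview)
Your proposal is correct and follows essentially the same route as the paper: recognize $\tilde W$ as a viscosity solution of the modified equation with $\tilde\sigma(s,x)=\sigma(s,x,\tilde W(s,x))$, invoke Theorems \ref{th-vis} and \ref{th-vis-uni} to identify $\tilde W(t,x)=\bar Y_t^{t,x}$, use Proposition \ref{pr-um} (which in the $u$-independent case yields the exact decoupling-field identity $\bar Y_s^{t,x}=\tilde W(s,\bar X_s^{t,x})$), and then conclude by FBSDE uniqueness that $(\bar X,\bar Y,\bar Z)=(X,Y,Z)$, hence $\tilde W=W$. The only superfluous step is your initial appeal to Theorem \ref{th-um1} for $W\leq\tilde W$: the paper omits it because the decoupling-field argument already delivers equality directly, not merely the reverse inequality.
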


\begin{proof}
We only prove the first case (the condition (i) holds). The proof for the
second case is similar. Following the same steps in Theorem \ref{th-um1},
$\tilde{W}$ is also a viscosity solution to PDE system
\[
%\left\{
%\begin{array}
%[c]{l}%
\partial_{t}F(t,x)+H(t,x,F(t,x),DF(t,x),D^{2}F(t,x))=0,\ F(T,x)=\phi(x),
%\end{array}
%\right.
\]
where $H\left(  \cdot\right)  $ is the function in equation (\ref{new-lx-17})
without control variables. Since $\tilde{\sigma}$ is independent of $(y,z)$,
by Theorems \ref{th-vis} and \ref{th-vis-uni}, we have $\tilde{W}\left(
t,x\right)  =\bar{Y}_{t}^{t,x}$, where $\bar{Y}_{t}^{t,x}$ is the solution to
the following FBSDE at time $t$,%
\begin{equation}
\left\{
\begin{array}
[c]{rl}%
d\bar{X}_{s}^{t,x}= & b(s,\bar{X}_{s}^{t,x},\bar{Y}_{s}^{t,x},\bar{Z}%
_{s}^{t,x})ds+\sigma(s,\bar{X}_{s}^{t,x},\tilde{W}(s,\bar{X}_{s}^{t,x}%
))dB_{s},\\
d\bar{Y}_{s}^{t,x}= & -g(s,\bar{X}_{s}^{t,x},\bar{Y}_{s}^{t,x},\bar{Z}%
_{s}^{t,x})ds+\bar{Z}_{s}^{t,x}dB_{s},\;s\in\lbrack t,T],\\
\bar{X}_{t}^{t,x}= & x,\ \bar{Y}_{T}^{t,x}=\phi(\bar{X}_{T}^{t,x}).
\end{array}
\right.  \label{new-lx-81}%
\end{equation}
For each fixed $(t,x)\in\lbrack0,T)\times\mathbb{R}^{n}$, by Proposition
\ref{pr-um}, we have
\begin{equation}
\bar{Y}_{s}^{t,x}=\tilde{W}\left(  s,\bar{X}_{s}^{t,x}\right)  \text{ for
}s\in\lbrack t,T]. \label{new-lx-911}%
\end{equation}
Then $\left(  \bar{X}^{t,x},\bar{Y}^{t,x},\bar{Z}^{t,x}\right)  $ satisfies
the following fully coupled FBSDE:%
\begin{equation}
\left\{
\begin{array}
[c]{rl}%
dX_{s}^{t,x}= & b(s,X_{s}^{t,x},Y_{s}^{t,x},Z_{s}^{t,x})ds+\sigma
(s,X_{s}^{t,x},Y_{s}^{t,x})dB_{s},\\
dY_{s}^{t,x}= & -g(s,X_{s}^{t,x},Y_{s}^{t,x},Z_{s}^{t,x})ds+Z_{s}^{t,x}%
dB_{s},\;s\in\lbrack t,T],\\
X_{t}^{t,x}= & x,\ Y_{T}^{t,x}=\phi(X_{T}^{t,x}).
\end{array}
\right.  \label{new-lx-91}%
\end{equation}
By the uniqueness of FBSDEs (\ref{new-lx-81}) and (\ref{new-lx-91}), one has%
\begin{equation}
Y_{t}^{t,x}=\bar{Y}_{t}^{t,x}. \label{new-lx-912}%
\end{equation}
Since $Y_{t}^{t,x}=W(t,x)$, we get $W(t,x)=\tilde{W}(t,x)$ by
(\ref{new-lx-911}) and (\ref{new-lx-912}).
\end{proof}

Similarly, we have the following theorem.

\begin{theorem}
\label{uni-pde}Suppose $b$, $\sigma$ and $g$ are independent of control
variable $u$; and one of the following two conditions holds true:

\begin{description}
\item[(i)] Assumptions \ref{assum-1} and \ref{assum-l3} hold;

\item[(ii)] Assumptions \ref{assum-1} (i), \ref{assum-l3} and \ref{assm-mon} hold.
\end{description}

\noindent Let $W$ be the value function and $\tilde{W}$ be a viscosity
solution to HJB equation (\ref{eq-hjb}). Furthermore, we assume that
$\tilde{W}$ is Lipschitz continuous in $(t,x)$, $D\tilde{W}$ is Lipschitz
continuous in $x$ and $||D\tilde{W}||_{\infty}L_{3}<1$. Then $W=\tilde{W}$.
\end{theorem}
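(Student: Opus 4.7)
The plan is to combine Theorem~\ref{th-um2} with the FBSDE-uniqueness argument from Theorem~\ref{th-uni-pde}. Theorem~\ref{th-um2} (applied without the $u$-dependence) already delivers $W\leq \tilde W$, so the work is to produce the reverse inequality $W\geq\tilde W$ by exhibiting a common solution of the original fully coupled FBSDE.

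As in the proof of Theorem~\ref{th-um2}, define $\tilde V(t,x)$ as the unique solution of the algebra equation
\[
\tilde V(t,x)=D\tilde W(t,x)^{\intercal}\sigma(t,x,\tilde W(t,x),\tilde V(t,x)),
\]
whose existence follows from Lemma~\ref{le-h} and the condition $\|D\tilde W\|_{\infty}L_{3}<1$. Put $\tilde b(s,x):=b(s,x,\tilde W(s,x),\tilde V(s,x))$ and $\tilde\sigma(s,x):=\sigma(s,x,\tilde W(s,x),\tilde V(s,x))$; these coefficients are locally Lipschitz with linear growth as in \eqref{new-lx-30}, so the decoupled FBSDE
\begin{equation*}
\left\{
\begin{array}{l}
d\bar X_{s}^{t,x}=\tilde b(s,\bar X_{s}^{t,x})\,ds+\tilde\sigma(s,\bar X_{s}^{t,x})\,dB_{s},\\
d\bar Y_{s}^{t,x}=-g(s,\bar X_{s}^{t,x},\bar Y_{s}^{t,x},\bar Z_{s}^{t,x})\,ds+\bar Z_{s}^{t,x}\,dB_{s},\;s\in[t,T],\\
\bar X_{t}^{t,x}=x,\ \bar Y_{T}^{t,x}=\phi(\bar X_{T}^{t,x})
\end{array}
\right.
\end{equation*}
admits a unique $L^{p}$-solution for every $p\geq2$ by Proposition~3.28 in \cite{Pardoux-book}.

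By the definition of viscosity solution, $\tilde W$ is also a viscosity solution of the semilinear PDE obtained from (\ref{eq-hjb}) by replacing $\sigma(\cdot)$ with $\tilde\sigma(\cdot)$, which is independent of $(y,z)$. Hence Theorems~\ref{th-vis} and \ref{th-vis-uni} identify $\tilde W$ as the value function of the above decoupled system; in particular, $\tilde W(t,x)=\bar Y_{t}^{t,x}$, and the same argument along the flow yields $\bar Y_{s}^{t,x}=\tilde W(s,\bar X_{s}^{t,x})$ $P$-a.s.\ for all $s\in[t,T]$. The crucial companion identification
\[
\bar Z_{s}^{t,x}=\tilde V(s,\bar X_{s}^{t,x}),\qquad ds\otimes dP\text{-a.e.},
\]
is obtained by repeating verbatim the mollification step from the proof of Theorem~\ref{th-um2}: apply Itô's formula to $\tilde W_{\epsilon}(s,\bar X_{s}^{t,x})$ as in \eqref{new-lx-26}, estimate the resulting BSDE against $\bar Y^{t,x}$ as in \eqref{new-lx-27}, and pass to the limit $\epsilon\rightarrow0$ using dominated convergence together with the $L^{p}$-bounds on $\bar X^{t,x}$.

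Substituting these two identities into the dynamics of $(\bar X^{t,x},\bar Y^{t,x},\bar Z^{t,x})$ shows that this triple also satisfies the original (uncontrolled) fully coupled FBSDE
\begin{equation*}
\left\{
\begin{array}{l}
dX_{s}^{t,x}=b(s,X_{s}^{t,x},Y_{s}^{t,x},Z_{s}^{t,x})\,ds+\sigma(s,X_{s}^{t,x},Y_{s}^{t,x},Z_{s}^{t,x})\,dB_{s},\\
dY_{s}^{t,x}=-g(s,X_{s}^{t,x},Y_{s}^{t,x},Z_{s}^{t,x})\,ds+Z_{s}^{t,x}\,dB_{s},\;s\in[t,T],\\
X_{t}^{t,x}=x,\ Y_{T}^{t,x}=\phi(X_{T}^{t,x}).
\end{array}
\right.
\end{equation*}
By the uniqueness of the fully coupled FBSDE (Theorem~2.2 in \cite{Hu-JX}), $(\bar X^{t,x},\bar Y^{t,x},\bar Z^{t,x})$ coincides with $(X^{t,x},Y^{t,x},Z^{t,x})$, whence $W(t,x)=Y_{t}^{t,x}=\bar Y_{t}^{t,x}=\tilde W(t,x)$. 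The main technical obstacle is the identification $\bar Z=\tilde V(\cdot,\bar X)$: because $D\tilde W$ is only Lipschitz, $D^{2}\tilde W$ exists merely almost everywhere and one cannot apply Itô's formula directly to $\tilde W(s,\bar X_{s})$; the mollification/limit procedure imported from Theorem~\ref{th-um2} is what makes this identification rigorous.
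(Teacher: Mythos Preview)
Your proof is correct and matches the paper's intended approach. The paper does not write out the proof of Theorem~\ref{uni-pde} explicitly (it only says ``Similarly, we have the following theorem'' after Theorem~\ref{th-uni-pde}), but the natural reading is precisely what you do: transplant the FBSDE-uniqueness argument of Theorem~\ref{th-uni-pde} into the setting of Theorem~\ref{th-um2}, using the mollification estimate \eqref{new-lx-27}--\eqref{new-lx-29} (with the right-hand side now exactly zero because $\bar Y_{s}=\tilde W(s,\bar X_{s})$) to obtain the extra identification $\bar Z_{s}=\tilde V(s,\bar X_{s})$ that is needed when $\sigma$ depends on $z$.
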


\begin{remark}
In the above theorems, we assume that the Assumption \ref{assum-1} or the
monotonicity conditions hold. It is well-known that there are other conditions
which can guarantee the existence and uniqueness of the fully coupled
controlled system (\ref{state-eq}). In fact, our approach can be generalized
to deal with any fully coupled controlled system which is well-posed and the
related $L^{2}$-estimates of the solution hold.
\end{remark}

\subsection{The smooth case}

{In this subsection, we assume that the solution of the HJB equation
$\tilde{W}\in C^{1,2}([0,T]\times\mathbb{R}^{n})$. Then, we have the following
theorem.}

\begin{theorem}
Suppose one of the following two conditions holds true:

\begin{description}
\item[(i)] Assumptions \ref{assum-1} and \ref{assum-l3} hold;

\item[(ii)] Assumptions \ref{assum-1} (i), \ref{assum-l3} and \ref{assm-mon} hold.
\end{description}

\noindent Let $W$ be the value function and $\tilde{W}\in C^{1,2}%
([0,T]\times\mathbb{R}^{n})$ be a solution to the HJB equation (\ref{eq-hjb}).
Furthermore, we assume $||\sigma||_{\infty}<\infty$, $||D\tilde{W}||_{\infty
}L_{3}<1$ and $||D^{2}\tilde{W}||_{\infty}<\infty$. Then $W=\tilde{W}$.
\end{theorem}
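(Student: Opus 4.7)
The plan is to establish both $\tilde{W}\leq W$ and $W\leq \tilde{W}$ by applying It\^o's formula to $\tilde{W}$ along trajectories of the controlled FBSDE and invoking the BSDE comparison theorem; the smoothness of $\tilde{W}$ together with the bounds $\|\sigma\|_\infty,\|D^{2}\tilde{W}\|_\infty<\infty$ and $\|D\tilde{W}\|_\infty L_{3}<1$ are what make the argument self-contained (one cannot simply invoke Theorem \ref{th-um2}, since the HJB equation only yields $\tilde{W}$ locally Lipschitz in $t$, not globally).

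For $\tilde{W}\leq W$, I would fix $u\in\mathcal{U}[t,T]$, consider the solution $(X^{t,x;u},Y^{t,x;u},Z^{t,x;u})$ of \eqref{state-eq}, and apply It\^o to $\tilde{W}(s,X_{s}^{t,x;u})$. This produces a BSDE with terminal value $\phi(X_{T}^{t,x;u})=Y_{T}^{t,x;u}$, martingale integrand $\hat{Z}_{s}=(D\tilde{W}(s,X_{s}^{t,x;u}))^{\intercal}\sigma(s,X_{s}^{t,x;u},Y_{s}^{t,x;u},Z_{s}^{t,x;u},u_{s})$, and drift $\mathcal{L}^{u}\tilde{W}:=\partial_{t}\tilde{W}+(D\tilde{W})^{\intercal}b+\tfrac12\mathrm{tr}(\sigma\sigma^{\intercal}D^{2}\tilde{W})$, all with the coefficients evaluated at $(s,X,Y^{t,x;u},Z^{t,x;u},u_{s})$. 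The HJB equation evaluated at $u=u_{s}$ gives $\partial_{t}\tilde{W}+H(s,X,\tilde{W},D\tilde{W},D^{2}\tilde{W},u_{s})\geq 0$. Writing the drift of the BSDE for $Y^{t,x;u}-\tilde{W}(s,X_{s}^{t,x;u})$ as this nonnegative HJB expression plus a Lipschitz remainder in $(Y^{t,x;u}-\tilde{W},\,Z^{t,x;u}-\tilde{V}(s,X,u_{s}))$ (with coefficients bounded through $\|D\tilde{W}\|_\infty,\|D^{2}\tilde{W}\|_\infty,\|\sigma\|_\infty$), the BSDE comparison theorem will yield $Y^{t,x;u}\geq\tilde{W}(\cdot,X^{t,x;u})$ on $[t,T]$. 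Taking $s=t$ and infimum over $u$ gives $\tilde{W}(t,x)\leq W(t,x)$.

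A key technical point here is that the Lipschitz remainder involves $Z^{t,x;u}-\tilde{V}$, which is not directly the BSDE $Z$-increment $Z^{t,x;u}-\hat{Z}$. Using the algebra equation defining $\tilde{V}$ and the identity $\hat{Z}-\tilde{V}=(D\tilde{W})^{\intercal}[\sigma(s,X,Y^{t,x;u},Z^{t,x;u},u_{s})-\sigma(s,X,\tilde{W},\tilde{V},u_{s})]$, the Lipschitz property of $\sigma$ combined with $\|D\tilde{W}\|_\infty L_{3}<1$ lets one solve for $Z^{t,x;u}-\tilde{V}$ as a bounded affine function of $Z^{t,x;u}-\hat{Z}$ and $Y^{t,x;u}-\tilde{W}$, reducing the comparison to the standard Lipschitz BSDE setting.

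For $W\leq\tilde{W}$, a measurable selection theorem (applicable since $U$ is compact and $H$ is continuous in $u$) supplies a measurable $u^{\ast}:[0,T]\times\mathbb{R}^{n}\to U$ attaining the infimum in the HJB equation. On the FBSDE driven by the feedback $u^{\ast}(s,X^{\ast}_{s})$, It\^o applied to $\tilde{W}(s,X^{\ast}_{s})$ now yields an \emph{equality}, so the triple $(X^{\ast},\tilde{W}(\cdot,X^{\ast}),\tilde{V}(\cdot,X^{\ast},u^{\ast}(\cdot,X^{\ast})))$ satisfies the same fully coupled FBSDE as $(X^{\ast},Y^{\ast},Z^{\ast})$; uniqueness forces $Y^{\ast}_{t}=\tilde{W}(t,x)\geq W(t,x)$. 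The hard part is the regularity of $u^{\ast}$: measurability alone may not allow solving the FBSDE in the strong sense. I would bypass this by adapting the piecewise-constant scheme of Proposition \ref{pr-um}: on a fine time-partition of $[t,T]$, pick $\varepsilon$-optimal deterministic controls $u^{i,m}$ at the grid nodes, concatenate into $u^{m}\in\mathcal{U}^{t}[t,T]$, and recycle the direction-one comparison estimate (now with a small nonnegative driver term) to conclude $Y_{t}^{t,x;u^{m}}\to\tilde{W}(t,x)$ as $m\to\infty$, hence $W(t,x)\leq\tilde{W}(t,x)$.
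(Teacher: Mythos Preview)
Your treatment of the inequality $\tilde{W}\leq W$ is exactly the paper's argument: apply It\^{o}'s formula to $\tilde{W}(s,X_{s}^{t,x;u})$, form the BSDE for $\hat{Y}_{s}=Y_{s}^{t,x;u}-\tilde{W}(s,X_{s}^{t,x;u})$, split the generator into a nonnegative HJB term $\Pi_{1}$ plus a remainder $\Pi_{2}$ that is linear in $(\hat{Y},\hat{Z})$ with bounded coefficients, and conclude by BSDE comparison. Your observation that $\|D\tilde{W}\|_{\infty}L_{3}<1$ is needed to invert the relation between $Z^{t,x;u}-\tilde{V}$ and $Z^{t,x;u}-\hat{Z}$ is precisely what the paper does in equation (\ref{new-lx-31}); the boundedness of $\sigma$ and $D^{2}\tilde{W}$ is used to control the second-order term.

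For $W\leq\tilde{W}$, however, you diverge from the paper. The paper simply invokes Theorem~\ref{th-um2}, and your stated reason for not doing so---that $\tilde{W}$ is only locally Lipschitz in $t$---is addressed by the Remark immediately following Theorem~\ref{th-um2}: since $\tilde{W}\in C^{1,2}$, the derivative $\partial_{t}\tilde{W}$ is continuous, and the remark shows that the global Lipschitz-in-$t$ hypothesis can then be dropped (one gets $|\partial_{t}\tilde{W}(t,x)|\leq C(1+|x|^{2})$, which suffices for the estimate (\ref{new-ism})). The remaining hypotheses of Theorem~\ref{th-um2} are satisfied because $\|D\tilde{W}\|_{\infty}<\infty$ gives Lipschitz continuity in $x$, and $\|D^{2}\tilde{W}\|_{\infty}<\infty$ gives Lipschitz continuity of $D\tilde{W}$ in $x$. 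So the paper's route is a one-line citation.

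Your proposed alternatives for this direction are not wrong, but they are more work than necessary. The measurable-selection route, as you correctly flag, founders on the regularity of the feedback $u^{\ast}$ needed to solve the fully coupled FBSDE. Your fallback---replaying the piecewise-constant construction of Proposition~\ref{pr-um} in the smooth setting---would succeed, but it essentially amounts to reproving Theorem~\ref{th-um2} from scratch rather than invoking it.
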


\begin{proof}
Without loss of generality, we only prove the case $d=1$. For each given
$u\in\mathcal{U}^{t}[t,T]$, let $(X^{t,x;u},Y^{t,x;u},Z^{t,x;u})$ be the
solution to FBSDE (\ref{state-eq}) with $\xi=x$. Applying It\^{o}'s formula to
$\tilde{W}(s,X_{s}^{t,x;u})$, we obtain%
\[
\left\{
\begin{array}
[c]{l}%
d\tilde{W}(s,X_{s}^{t,x;u})\\
= \left\{  \partial_{t}\tilde{W}(s,X_{s}^{t,x;u}))+\left(  D\tilde{W}%
(s,X_{s}^{t,x;u})\right)  ^{\intercal}b(s,X_{s}^{t,x;u},Y_{s}^{t,x;u}%
,Z_{s}^{t,x;u},u_{s})\right. \\
\ \ \left.  +\frac{1}{2}\mathrm{tr}[\sigma\sigma^{\intercal}(s,X_{s}%
^{t,x;u},Y_{s}^{t,x;u},Z_{s}^{t,x;u},u_{s})D^{2}\tilde{W}(s,X_{s}%
^{t,x;u})]\right\}  ds\\
\ \ +D\tilde{W}(s,X_{s}^{t,x;u})^{\intercal}\sigma(s,X_{s}^{t,x;u}%
,Y_{s}^{t,x;u},Z_{s}^{t,x;u},u_{s})dB_{s},\;s\in\lbrack t,T],\\
\tilde{W}(T,X_{T}^{t,x;u})= \phi(X_{T}^{t,x;u}).
\end{array}
\right.
\]
Set $
%\[%
%\begin{array}
%[c]{l}%
\tilde{Y}_{s}=\tilde{W}(s,X_{s}^{t,x;u}),\ \tilde{Z}_{s}=D\tilde{W}%
(s,X_{s}^{t,x;u})^{\intercal}\sigma(s,X_{s}^{t,x;u},Y_{s}^{t,x;u}%
,Z_{s}^{t,x;u},u_{s}),\newline\hat{Y}_{s}=Y_{s}^{t,x;u}-\tilde{Y}_{s},\text{
}\hat{Z}_{s}=Z_{s}^{t,x;u}-\tilde{Z}_{s}.$
%\end{array}
%\]
Then,
\begin{equation}
\label{new-asd-11}d\hat{Y}_{s}= -\left(  \Pi_{1}(s)+\Pi_{2}(s)\right)
ds+\hat{Z}_{s}dB_{s},\ \hat{Y}_{T}= 0,
\end{equation}
where%
\[%
\begin{array}
[c]{rl}%
\Pi_{1}(s)= & H(s,X_{s}^{t,x;u},\tilde{W}(s,X_{s}^{t,x;u}),D\tilde{W }%
(s,X_{s}^{t,x;u}),D^{2}\tilde{W}(s,X_{s}^{t,x;u}),u_{s})\\
& +\partial_{t}\tilde{W}(s,X_{s}^{t,x;u})\geq0,\\
\Pi_{2}(s)= & \left(  D\tilde{W}(s,X_{s}^{t,x;u})\right)  ^{\intercal}\left[
b_{1}(s)-b_{2}(s)\right]  +g_{1}(s)-g_{2}(s)\\
& +\frac{1}{2}\mathrm{tr}[\left(  \sigma_{1}\sigma_{1}^{\intercal}%
(s)-\sigma_{2}\sigma_{2}^{\intercal}(s)\right)  D^{2}\tilde{W}(s,X_{s}%
^{t,x;u})],\\
b_{1}(s)= & b(s,X_{s}^{t,x;u},Y_{s}^{t,x;u},Z_{s}^{t,x;u},u_{s}),\\
b_{2}(s)= & b(s,X_{s}^{t,x;u},\tilde{W}(s,X_{s}^{t,x;u}),\tilde{V}%
(s,X_{s}^{t,x;u},u_{s}),u_{s}),\\
\tilde{V}(t,x,u)= & D\tilde{W}(t,x)^{\intercal}\sigma(t,x,\tilde{W
}(t,x),\tilde{V}(t,x,u),u),
\end{array}
\]
and $\sigma_{i}$, $g_{i}$ are defined similarly for $i=1,2$. Let%
\[
b_{1}(s)-b_{2}(s)=\beta_{1}(s)\hat{Y}_{s}+\gamma_{1}(s)\left(  Z_{s}%
^{t,x;u}-\tilde{V}(s,X_{s}^{t,x;u},u_{s})\right)
\]
and%
\[%
\begin{array}
[c]{l}%
Z_{s}^{t,x;u}-\tilde{V}(s,X_{s}^{t,x;u},u_{s})\\
=\hat{Z}_{s}+D\tilde{W}(s,X_{s}^{t,x;u})^{\intercal}\left(  \sigma
_{1}(s)-\sigma_{2}(s)\right) \\
=\hat{Z}_{s}+D\tilde{W}(s,X_{s}^{t,x;u})^{\intercal}\left[  \beta_{2}%
(s)\hat{Y}_{s}+\gamma_{2}(s)\left(  Z_{s}^{t,x;u}-\tilde{V}(s,X_{s}%
^{t,x;u},u_{s})\right)  \right]  ,\\
=\hat{Z}_{s}+D\tilde{W}(s,X_{s}^{t,x;u})^{\intercal}\beta_{2}(s)\hat{Y}%
_{s}+D\tilde{W}(s,X_{s}^{t,x;u})^{\intercal}\gamma_{2}(s)\left(  Z_{s}%
^{t,x;u}-\tilde{V}(s,X_{s}^{t,x;u},u_{s})\right)  ,
\end{array}
\]
where
\[%
\begin{array}
[c]{l}%
\beta_{1}(s)=\left\{
\begin{array}
[c]{cl}%
\frac{b_{1}(s)-b_{2}(s)}{Y_{s}^{t,x;u}-\tilde{Y}_{s}}, & \text{if }%
Y_{s}^{t,x;u}-\tilde{Y}_{s}\neq0,\\
0, & \text{if }Y_{s}^{t,x;u}-\tilde{Y}_{s}=0,
\end{array}
\right. \\
\gamma_{1}(s)=\left\{
\begin{array}
[c]{cl}%
\frac{b_{1}(s)-b_{2}(s)}{Z_{s}^{t,x;u}-\tilde{V}(s,X_{s}^{t,x;u},u_{s})}, &
\text{if }Z_{s}^{t,x;u}-\tilde{V}(s,X_{s}^{t,x;u},u_{s})\neq0,\\
0, & \text{if }Z_{s}^{t,x;u}-\tilde{V}(s,X_{s}^{t,x;u},u_{s})=0,
\end{array}
\right.
\end{array}
\]
$\beta_{2}\left(  \cdot\right)  $ and $\gamma_{2}\left(  \cdot\right)  $ are
defined similarly. Set
\begin{equation}
b_{1}(s)-b_{2}(s)=\tilde{\beta}_{1}(s)\hat{Y}_{s}+\tilde{\gamma}_{1}(s)\hat
{Z}_{s}, \label{new-lx-31}%
\end{equation}
where \ $
%\begin{array}
%[c]{l}%
\tilde{\beta}_{1}(s)=\beta_{1}(s)+\left(  1-D\tilde{W}(s,X_{s}^{t,x;u}%
)^{\intercal}\gamma_{2}(s)\right)  ^{-1}D\tilde{W}(s,X_{s}^{t,x;u}%
)^{\intercal}\beta_{2}(s)\gamma_{1}\left(  s\right)  ,\newline\tilde{\gamma
}_{1}(s)=\left(  1-D\tilde{W}(s,X_{s}^{t,x;u})^{\intercal}\gamma
_{2}(s)\right)  ^{-1}\gamma_{1}(s).$
%\end{array}
%\]
It is easy to check that $\beta_{1}$, $\gamma_{1}$, $\beta_{2}$, $\gamma_{2}$,
$\tilde{\beta}_{1}$ and $\tilde{\gamma}_{1}$ are bounded. Note that $\sigma$
is bounded. Similar to the proof of (\ref{new-lx-31}), we have
\[%
\begin{array}
[c]{rl}%
\mathrm{tr}[\left(  \sigma_{1}\sigma_{1}^{\intercal}(s)-\sigma_{2}\sigma
_{2}^{\intercal}(s)\right)  D^{2}\tilde{W}(s,X_{s}^{t,x;u})] & =\tilde{\beta
}_{2}(s)\hat{Y}(s)+\tilde{\gamma}_{2}(s)\hat{Z}(s),\\
g_{1}(s)-g_{2}(s) & =\tilde{\beta}_{3}(s)\hat{Y}(s)+\tilde{\gamma}_{3}%
(s)\hat{Z}(s),
\end{array}
\]
where $\tilde{\beta}_{2}$, $\tilde{\gamma}_{2}$, $\tilde{\beta}_{3}$
$\tilde{\gamma}_{3}$ are defined similarly and bounded. Then, we can rewrite
$\Pi_{2}(s)$ as \ $\Pi_{2}(s)=\beta(s)\hat{Y}(s)+\gamma(s)\hat{Z}(s), $ where
$\beta$, $\gamma\in\mathbb{R}$ are bounded. By the comparison theorem of
BSDEs, we get $\hat{Y}_{t}\geq0$ which implies $\tilde{W}(t,x)\leq
Y_{t}^{t,x;u}$. Thus $\tilde{W}\leq W$. On the other hand, by Theorem
\ref{th-um2}, we have $W\leq\tilde{W}$. Thus, $W=\tilde{W}$.
\end{proof}

\section{Appendix}

\subsection{The comparison theorem for FBSDEs}

Under Assumption \ref{assum-1}, we deduce a generalized comparison theorem for
FBSDEs. The proof is similar to that of Theorem 3.1 in \cite{Wu-compar} and
Theorem 5.11 in \cite{Li-W}. For the reader's convenience, we give a detailed
proof. Consider the following FBSDEs:
\begin{equation}
\left\{
\begin{array}
[c]{rl}%
dX_{s}^{i}= & b(s,X_{s}^{i},Y_{s}^{i},Z_{s}^{i})ds+\sigma(s,X_{s}^{i}%
,Y_{s}^{i},Z_{s}^{i})dB_{s},\\
dY_{s}^{i}= & -g(s,X_{s}^{i},Y_{s}^{i},Z_{s}^{i})ds+Z_{s}^{i}dB_{s},\\
X_{t}^{i}= & \xi,\ Y_{t+\delta}^{i}=\phi_{i}(X_{t+\delta}^{i}),\text{ }i=1,2.
\end{array}
\right.  \label{eq-appen}%
\end{equation}

\begin{theorem}
\label{th-comp}Suppose Assumption \ref{assum-1} holds. Then for $\delta
\in(0,T-t]$\ and $\xi\in L^{2}(\mathcal{F}_{t};\mathbb{R}^{n})$,
(\ref{eq-appen}) has a unique solution $(X_{s}^{i},Y_{s}^{i},Z_{s}^{i}%
)_{s\in\lbrack t,t+\delta]}$ associated with $(b,\sigma,$$g,\phi_{i})$. If
$\phi_{1}(X_{t+\delta}^{2})\geq\phi_{2}(X_{t+\delta}^{2}),P$-a.s. (resp.
$\phi_{1}(X_{t+\delta}^{1})\geq\phi_{2}(X_{t+\delta}^{1}),$ $P$-a.s.), then we
have $Y_{t}^{1}\geq Y_{t}^{2},$ $P$-$a.s.$.
\end{theorem}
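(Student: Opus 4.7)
Existence and uniqueness of the triples $(X^{i}_{s},Y^{i}_{s},Z^{i}_{s})_{s\in[t,t+\delta]}$ for $i=1,2$ is immediate from Theorem 2.2 of \cite{Hu-JX} under Assumption \ref{assum-1}, so the whole task reduces to establishing the inequality $Y^{1}_{t}\geq Y^{2}_{t}$, $P$-a.s.

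To set up the comparison I would form the differences $\hat{X}=X^{1}-X^{2}$, $\hat{Y}=Y^{1}-Y^{2}$, $\hat{Z}=Z^{1}-Z^{2}$ and write them as the solution of a linear FBSDE obtained by the difference-quotient linearisation already used in the proof of Lemma \ref{est-initial}: the coefficients $b^{i},\sigma^{i},g^{i}$ ($i=1,2,3$) are progressively measurable and bounded by $L_{1},L_{2},L_{3}$, the forward initial datum is $\hat{X}_{t}=0$, and the terminal datum decomposes as
\[
\hat{Y}_{t+\delta}=\phi_{1}(X^{1}_{t+\delta})-\phi_{2}(X^{2}_{t+\delta})=\Phi\,\hat{X}_{t+\delta}+\zeta,
\]
where, under the first hypothesis, one chooses $\zeta:=\phi_{1}(X^{2}_{t+\delta})-\phi_{2}(X^{2}_{t+\delta})\geq 0$ and $\Phi$ a bounded $\mathcal{F}_{t+\delta}$-measurable difference quotient of $\phi_{1}$ with $|\Phi|\leq L_{1}$; a symmetric splitting works under the alternative hypothesis. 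Thus the theorem reduces to the following model claim: any linear FBSDE with bounded coefficients, $\hat{X}_{t}=0$, and terminal datum of the form $\Phi\,\hat{X}_{t+\delta}+\zeta$ with $\zeta\geq 0$ satisfies $\hat{Y}_{t}\geq 0$.

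My plan to prove the reduced claim is via an adjoint/duality argument. Introduce an adjoint pair $(p_{s},q_{s})_{s\in[t,t+\delta]}$ consisting of a forward homogeneous linear SDE for $p$ with $p_{t}=1$ (so $p$ is a Dol\'eans--Dade exponential and $p_{s}>0$ a.s.) and a backward linear BSDE for $q$ with $q_{t+\delta}=p_{t+\delta}\Phi$. The drift and diffusion coefficients of the adjoint are chosen by matching terms in the It\^{o} expansion of $p_{s}\hat{Y}_{s}+q_{s}\hat{X}_{s}$ so that every drift proportional to $\hat{X},\hat{Y},\hat{Z}$ cancels; well-posedness of this adjoint system (again a fully coupled linear FBSDE with Lipschitz coefficients of sizes $L_{1},L_{2},L_{3}$) follows from Theorem 2.2 of \cite{Hu-JX} applied to the adjoint, where Assumption \ref{assum-1}(ii) again controls the contraction constant. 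Taking expectation of the resulting identity and using $\hat{X}_{t}=0$ together with the terminal boundary conditions gives a representation of the form
\[
\hat{Y}_{t}=\mathbb{E}\!\left[\,p_{t+\delta}\,\zeta\,\right]\geq 0,
\]
as required.

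The principal obstacle is precisely the full coupling: the $\Phi\,\hat{X}_{t+\delta}$ term in the terminal datum of $\hat{Y}$ rules out a direct BSDE-comparison argument of the kind available in the decoupled case. The adjoint must absorb this coupling through the boundary condition $q_{t+\delta}=p_{t+\delta}\Phi$, and constructing $(p,q)$ so that the cancellation in the It\^{o} expansion is complete while keeping $p$ strictly positive is where the smallness bounds on $L_{2},L_{3}$ from Assumption \ref{assum-1}(ii) play their role. This is the same template as Theorem 3.1 of \cite{Wu-compar} and Theorem 5.11 of \cite{Li-W}; I would prefer this It\^{o}-product route over a Picard-type iteration, since the iteration scheme does not preserve the sign of intermediate approximants and so would not close the argument.
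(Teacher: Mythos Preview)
Your overall strategy---linearise the difference $(\hat X,\hat Y,\hat Z)$, introduce an adjoint FBSDE, and apply It\^o's formula to a bilinear combination---is exactly the route the paper takes. The reduction to a representation $\hat Y_{t}=\mathbb{E}\bigl[\,h_{t+\delta}\,\zeta\,\big|\,\mathcal{F}_{t}\bigr]$ with $\zeta\geq 0$ is correct (note the conditional expectation, since $\xi\in L^{2}(\mathcal{F}_{t})$).

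There is, however, a genuine gap in your positivity argument. You assert that the forward adjoint $p$ is ``a Dol\'eans--Dade exponential and $p_{s}>0$ a.s.'', but this is not true in the fully coupled setting. Writing out the cancellation in the It\^o expansion of $p\hat Y+q\hat X$ forces the drift and diffusion of $p$ to contain terms in the backward adjoint as well: in the paper's notation the forward adjoint $h$ satisfies
\[
dh_{s}=\bigl[g^{2}(s)h_{s}+b^{2}(s)m_{s}+\sigma^{2}(s)n_{s}\bigr]ds+\bigl[g^{3}(s)h_{s}+b^{3}(s)m_{s}+\sigma^{3}(s)n_{s}\bigr]dB_{s},
\]
so the $b^{2},b^{3},\sigma^{2},\sigma^{3}$ coefficients (exactly the ones coming from the $y,z$-dependence of $b,\sigma$) couple $h$ to $(m,n)$. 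Only in the decoupled case $b^{2}=b^{3}=\sigma^{2}=\sigma^{3}=0$ would $h$ be a genuine stochastic exponential. You yourself note later that the adjoint is ``again a fully coupled linear FBSDE'', which is inconsistent with the earlier exponential claim.

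The paper closes this gap not through smallness of $L_{2},L_{3}$ but through a stopping-time argument: set $\tau=\inf\{s>t:h_{s}=0\}\wedge(t+\delta)$, solve the same adjoint FBSDE on $[\tau,t+\delta]$ with initial datum $\tilde h_{\tau}=0$, observe that $(0,0,0)$ is its unique solution, and patch $(h,m,n)|_{[t,\tau]}$ with $(0,0,0)|_{(\tau,t+\delta]}$ to obtain another solution of the original adjoint on $[t,t+\delta]$. Uniqueness then forces $h_{t+\delta}=\bar h_{t+\delta}\geq 0$. This is the missing ingredient in your plan.
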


\begin{proof}
Without loss of generality, we only prove the case $d=1$. Let $\hat{X}%
=X^{1}-X^{2}$, $\hat{Y}=Y^{1}-Y^{2}$, $\hat{Z}=Z^{1}-Z^{2}$. Then $\left(
\hat{X},\hat{Y},\hat{Z}\right)  $ satisfies the following FBSDE:
\[
\left\{
\begin{array}
[c]{rl}%
d\hat{X}_{s}= & \left[  b^{1}(s)\hat{X}_{s}+b^{2}(s)\hat{Y}_{s}+b^{3}%
(s)\hat{Z}_{s}\right]  ds\\
& +\left[  \sigma^{1}(s)\hat{X}_{s}+\sigma^{2}(s)\hat{Y}_{s}+\sigma^{3}%
(s)\hat{Z}_{s}\right]  dB_{s},\\
d\hat{Y}_{s}= & -\left[  g^{1}(s)\hat{X}_{s}+g^{2}(s)\hat{Y}_{s}+g^{3}%
(s)\hat{Z}_{s}\right]  ds+\hat{Z}_{s}dB_{s},\text{ }s\in\lbrack t,t+\delta],\\
\hat{X}_{t}= & 0,\ \hat{Y}_{t+\delta}=\phi^{1}(t+\delta)\hat{X}_{t+\delta
}+\phi_{1}(X_{t+\delta}^{2})-\phi_{2}\left(  X_{t+\delta}^{2}\right)  ,
\end{array}
\right.
\]
where $b^{i}$, $\sigma^{i}$, $g^{i}$, $\phi^{1}$, $i=1,2,3$ are defined in the
proof of Lemma \ref{est-initial}. Introduce the adjoint equation for the above
equation as follows%
\begin{equation}
\left\{
\begin{array}
[c]{rl}%
dh_{s}= & \left[  g^{2}(s)h_{s}+b^{2}(s)m_{s}+\sigma^{2}(s)n_{s}\right]
ds+\left[  g^{3}(s)h_{s}+b^{3}(s)m_{s}+\sigma^{3}(s)n_{s}\right]  dB_{s},\\
dm_{s}= & -\left[  g^{1}(s)h_{s}+b^{1}(s)m_{s}+\sigma^{1}(s)n_{s}\right]
ds+n_{s}dB_{s},\\
h_{t}= & 1,\ m_{t+\delta}=\phi^{1}(t+\delta)h_{t+\delta}.
\end{array}
\right.  \label{eq-dual-comp}%
\end{equation}
It is easy to check that (\ref{eq-dual-comp}) satisfies the assumptions of
Theorem 2.2 in \cite{Hu-JX}. Consequently, it has a unique solution
$(h,m,n)\in L_{\mathcal{F}}^{2}(\Omega;C([t,t+\delta];\mathbb{R}))\times
L_{\mathcal{F}}^{2}(\Omega;C([t,t+\delta];\mathbb{R}^{n}))\times
L_{\mathcal{F}}^{2,2}(t,t+\delta;\mathbb{R}^{n\times d})$. Applying It\^{o}'s
formula to $m\hat{X}-h\hat{Y}$, we get%
\[
\hat{Y}_{t}=\mathbb{E}\left[  \left.  \left(  \phi_{1}(X_{t+\delta}^{2}%
)-\phi_{2}\left(  X_{t+\delta}^{2}\right)  \right)  h_{t+\delta}\right\vert
\mathcal{F}_{t}\right]  .
\]
Since $\phi_{1}(X_{t+\delta}^{2})\geq\phi_{2}(X_{t+\delta}^{2})$, $P$-$a.s.$,
we only need to prove $h_{t+\delta}\geq0$, $P$-$a.s.$. Define $\tau
=\inf\left\{  s>t:h_{s}=0\right\}  \wedge\left(  t+\delta\right)  $ and
consider the following FBSDE on $\left[  \tau,t+\delta\right]  $,
\begin{equation}
\left\{
\begin{array}
[c]{rl}%
d\tilde{h}_{s}= & \left[  g^{2}(s)\tilde{h}_{s}+b^{2}(s)\tilde{m}_{s}%
+\sigma^{2}(s)\tilde{n}_{s}\right]  ds\\
& +\left[  g^{3}(s)\tilde{h}_{s}+b^{3}(s)\tilde{m}_{s}+\sigma^{3}(s)\tilde
{n}_{s}\right]  dB_{s},\\
d\tilde{m}_{s}= & -\left[  g^{1}(s)\tilde{h}_{s}+b^{1}(s)\tilde{m}_{s}%
+\sigma^{1}(s)\tilde{n}_{s}\right]  ds+\tilde{n}_{s}dB_{s},\\
\tilde{h}_{\tau}= & 0,\ \tilde{m}_{t+\delta}=\phi^{1}(t+\delta)\tilde
{h}_{t+\delta}.
\end{array}
\right.  \label{eq-dual-comp2}%
\end{equation}
This FBSDE has a unique solution $(\tilde{h},\tilde{m},\tilde{n})=\left(
0,0,0\right)  $. Set
\[%
\begin{array}
[c]{rl}
& \bar{h}_{s}=h_{s}I_{\left[  t,\tau\right]  }\left(  s\right)  +\tilde{h}%
_{s}I_{(\tau,t+\delta]}\left(  s\right)  ,\text{ }\bar{m}_{s}=m_{s}I_{\left[
t,\tau\right]  }\left(  s\right)  +\tilde{m}_{s}I_{(\tau,t+\delta]}\left(
s\right)  ,\\
& \bar{n}_{s}=n_{s}I_{\left[  t,\tau\right]  }\left(  s\right)  +\tilde{n}%
_{s}I_{(\tau,t+\delta]}\left(  s\right)  .
\end{array}
\]
It is clear that $(\bar{h},\bar{m},\bar{n})$ is a solution to
(\ref{eq-dual-comp}). The definition of $\tau$ yields the desired result
$\bar{h}_{t+\delta}\geq0$.
\end{proof}

\subsection{$L^{p}$ estimate of FBSDEs}

The following Lemma is a combination of Theorem 3.17 and Theorem 5.17 in
\cite{Pardoux-book}.

\begin{lemma}
\label{sde-bsde} Consider the following decoupled FBSDE
\begin{equation}
\left\{
\begin{array}
[c]{rl}%
dX_{s}= & \bar{b}(s,X_{s})ds+\bar{\sigma}(s,X_{s})dB_{s},\\
dY_{s}= & -\bar{g}(s,X_{s},Y_{s},Z_{s})ds+Z_{s}dB_{s},\\
X_{0}= & x_{0},\ Y_{T^{\prime}}=\bar{\phi}(X_{T^{\prime}}),
\end{array}
\right.  \label{fbsde-pardoux}%
\end{equation}
where $T^{\prime}\leq T$ for some fixed $T>0$,
\[
\bar{b}:[0,T]\times\Omega\times\mathbb{R}^{n}\rightarrow\mathbb{R}^{n},
\ \bar{\sigma}:[0,T]\times\Omega\times\mathbb{R}^{n}\rightarrow\mathbb{R}%
^{n\times d},
\]%
\[
\bar{g}:[0,T]\times\Omega\times\mathbb{R}^{n}\rightarrow\mathbb{R},
\ \bar{\phi}:\mathbb{R}^{n}\rightarrow\mathbb{R}.
\]
For each fixed $p>1$, if the coefficients satisfy

(i) $\bar{b}(\cdot,0)$, $\bar{\sigma}(\cdot,0)$, $\bar{g}(\cdot,0,0,0)$ are
$\mathbb{F}$-adapted processes and
\[
\mathbb{E}\left\{  |\bar{\phi}(0)|^{p}+\left(  \int_{0}^{T^{\prime}}\left[
|\bar{b}(s,0)|+|\bar{g}(s,0,0,0)|\right]  ds\right)  ^{p}+\left(  \int%
_{0}^{T^{\prime}}|\bar{\sigma}(s,0)|^{2}ds\right)  ^{\frac{p}{2}}\right\}
<\infty,
\]

(ii)%
\[%
\begin{array}
[c]{rl}%
|\bar{\psi}(s,x_{1})-\bar{\psi}(s,x_{2})| & \leq L_{1}|x_{1}-x_{2}%
|,\ \ \text{for }\ \bar{\psi}=\bar{b},\bar{\sigma},\bar{\phi};\\
|\bar{g}(s,x_{1},y_{1},z_{1})-\bar{g}(s,x_{2},y_{2},z_{2})| & \leq
L_{1}(|x_{1}-x_{2}|+|y_{1}-y_{2}|+|z_{1}-z_{2}|),
\end{array}
\]
then (\ref{fbsde-pardoux}) has a unique solution $(X,Y,Z)\in L_{\mathcal{F}%
}^{p}(\Omega;C([0,T^{\prime}],\mathbb{R}^{n}))\times L_{\mathcal{F}}%
^{p}(\Omega;C([0,T^{\prime}],\mathbb{R}^{m}))\times L_{\mathcal{F}}%
^{2,p}([0,T^{\prime}];\mathbb{R}^{m\times d})$ and there exists a constant
$C_{p}$ which only depends on $L_{1}$, $p$, $T$ such that
\[%
\begin{array}
[c]{l}%
\mathbb{E}\left\{  \sup\limits_{s\in\lbrack0,T^{\prime}]}\left[  |X_{s}%
|^{p}+|Y_{s}|^{p}\right]  +\left(  \int_{0}^{T^{\prime}}|Z_{s}|^{2}ds\right)
^{\frac{p}{2}}\right\} \\
\ \leq C_{p}\mathbb{E}\left\{  \left[  \int_{0}^{T^{\prime}}\left(  |\bar
{b}(s,0)|+|\bar{g}(s,0,0,0)|\right)  ds\right]  ^{p}+\left(  \int%
_{0}^{T^{\prime}}|\bar{\sigma}(s,0)|^{2}ds\right)  ^{\frac{p}{2}}+|\bar{\phi
}(0)|^{p}+|x_{0}|^{p}\right\}  .
\end{array}
\]

\end{lemma}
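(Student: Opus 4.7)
The plan is to solve the decoupled FBSDE in two stages, handling the forward SDE first and then the BSDE with $X$ plugged in, and then combining the two $L^p$-estimates. Since the coupling is only one-way (the forward equation does not see $(Y,Z)$), no fixed-point machinery between forward and backward is needed.

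First I would treat the forward SDE
\[
dX_s = \bar b(s,X_s)\,ds + \bar\sigma(s,X_s)\,dB_s, \quad X_0=x_0.
\]
Under assumption (ii), $\bar b(s,\cdot)$ and $\bar\sigma(s,\cdot)$ are uniformly Lipschitz, and assumption (i) gives the required integrability of $\bar b(\cdot,0),\bar\sigma(\cdot,0)$. Existence and uniqueness in $L^p_{\mathcal F}(\Omega;C([0,T'];\mathbb R^n))$ follow from a standard Picard iteration on a short interval, then patched to $[0,T']$. For the $L^p$-bound I would apply It\^o's formula to $|X_s|^p$ (or rather to $(\varepsilon+|X_s|^2)^{p/2}$ to avoid regularity issues at the origin when $p<2$), use the Burkholder--Davis--Gundy inequality on the stochastic integral, the Lipschitz bound
\[
|\bar b(s,X_s)|\le |\bar b(s,0)|+L_1|X_s|,\quad |\bar\sigma(s,X_s)|\le|\bar\sigma(s,0)|+L_1|X_s|,
\]
and Gronwall's lemma to obtain
\[
\mathbb{E}\!\left[\sup_{s\in[0,T']}|X_s|^p\right] \le C_p\,\mathbb{E}\!\left[|x_0|^p+\Big(\textstyle\int_0^{T'}|\bar b(s,0)|ds\Big)^{p}+\Big(\textstyle\int_0^{T'}|\bar\sigma(s,0)|^2ds\Big)^{p/2}\right].
\]

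Second, with $X$ now a fixed process in $L^p_{\mathcal F}(\Omega;C([0,T'];\mathbb R^n))$, the backward equation becomes a decoupled BSDE
\[
Y_s = \bar\phi(X_{T'})+\int_s^{T'}\bar g(r,X_r,Y_r,Z_r)\,dr - \int_s^{T'}Z_r\,dB_r,
\]
whose driver $(r,y,z)\mapsto \bar g(r,X_r,y,z)$ is Lipschitz in $(y,z)$ with Lipschitz constant $L_1$, terminal value $\bar\phi(X_{T'})\in L^p$, and $\bar g(\cdot,X_\cdot,0,0)$ satisfying
$\mathbb{E}\bigl[\bigl(\int_0^{T'}|\bar g(r,X_r,0,0)|dr\bigr)^p\bigr]<\infty$ by the Lipschitz estimate in (ii), the integrability in (i), and the forward $L^p$-bound. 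This is exactly the setting in which existence, uniqueness and the $L^p$-estimate of Briand--Delyon--Hu--Pardoux--Stoica apply; I would derive the estimate by applying It\^o to $|Y_s|^p$ (with the same $\varepsilon$-regularization if $p\in(1,2)$), isolating $\int_s^{T'}|Z_r|^2 dr$ via the quadratic-variation term, absorbing the $Y$-Lipschitz part with Gronwall, and bounding the $Z$-Lipschitz term by Young's inequality. The BDG inequality then upgrades $|Y_s|^p$ to $\sup_s|Y_s|^p$ and converts $\int|Z|^2 dr$ into $\bigl(\int|Z|^2 dr\bigr)^{p/2}$.

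Finally I would combine the two estimates: substituting the forward bound into the backward bound yields the single inequality stated in the lemma, with the constant depending only on $L_1,p,T$. The main technical obstacle is the $L^p$-estimate of the BSDE for $p\in(1,2)$, where $|y|^p$ is not $C^2$; the standard remedy is to work with $(\varepsilon+|y|^2)^{p/2}$, establish the estimate uniformly in $\varepsilon$, and then send $\varepsilon\downarrow 0$, taking particular care that the local martingale terms generated by It\^o are true martingales (via a localization argument) before applying BDG. Everything else reduces to routine Gronwall and BDG manipulations already carried out for the forward part.
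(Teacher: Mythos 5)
Your proposal is correct and coincides with the paper's (implicit) argument: the paper gives no proof of this lemma, stating only that it is ``a combination of Theorem 3.17 and Theorem 5.17 in \cite{Pardoux-book}'', and those two cited results are exactly the two stages you describe — the $L^{p}$ well-posedness and estimate for the Lipschitz forward SDE, followed by the $L^{p}$ theory for Lipschitz BSDEs (Briand--Delyon--Hu--Pardoux--Stoica type) applied with $X$ frozen, then chained together. Your attention to the $p\in(1,2)$ regularization of $|y|^{p}$ and to the integrability of $\bar{g}(\cdot,X_{\cdot},0,0)$ and $\bar{\phi}(X_{T'})$ via the Lipschitz bounds and the forward estimate fills in precisely the details the citation suppresses.
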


\begin{remark}
\label{re-app-2} In the above lemma, the constant $C_{p}$ depends on the
Lipschitz constant $L_{1}$. To represent this dependence, we sometimes denote
$C_{p}$ by $C_{p}(L_{1})$. Generally, we can choose $C_{p}(L_{1})$ as an
increasing function with respect to $L_{1}$.
\end{remark}

Consider the following FBSDE:%
\begin{equation}
\left\{
\begin{array}
[c]{rl}%
dX_{s}= & b(s,X_{s},Y_{s},Z_{s})ds+\sigma(s,X_{s},Y_{s},Z_{s})dB_{s},\\
dY_{s}= & -g(s,X_{s},Y_{s},Z_{s})ds+Z_{s}dB_{s},\text{ }s\in\lbrack
0,T^{\prime}],\\
X_{0}= & x,\ Y_{T^{\prime}}=\phi(X_{T^{\prime}}),
\end{array}
\right.  \label{new-new-new-new-1}%
\end{equation}
where $T^{\prime}\leq T$ for some fixed $T>0$.

\begin{theorem}
\label{th-lp}Suppose Assumption \ref{assum-1} (i) holds and $C_{p}%
2^{p-1}\left[  L_{2}^{p}\left(  T^{\frac{p}{2}}+T^{p}\right)  \right.  $
$\left.  + L_{3}^{p} \right]  <1$ for some $p\geq2$, where $C_{p}$ is defined
in Lemma 5.1 in \cite{Hu-JX}.$\ $Then FBSDE (\ref{new-new-new-new-1}) admits a
unique solution $(X,Y,Z)\in L_{\mathcal{F}}^{p}(\Omega;C([0,T^{\prime
}];\mathbb{R}^{n}))\times L_{\mathcal{F}}^{p}(\Omega;C([0,T^{\prime
}];\mathbb{R}))$$\times L_{\mathcal{F}}^{2,p}(0,T^{\prime};\mathbb{R}^{1\times
d})$ and
\[%
\begin{array}
[c]{l}%
||(X,Y,Z)||_{p}^{p}=\mathbb{E}\left[  \sup\limits_{t\in\lbrack0,T^{\prime}%
]}\left(  |X_{t}|^{p}+|Y_{t}|^{p}\right)  +\left(  \int_{0}^{T^{\prime}}%
|Z_{t}|^{2}dt\right)  ^{\frac{p}{2}}\right] \\
\ \leq C\mathbb{E}\left[  \left(  \int_{0}^{T^{\prime}}%
[|b|+|g|](t,0,0,0)dt\right)  ^{p}+\left(  \int_{0}^{T^{\prime}}|\sigma
(t,0,0,0)|^{2}dt\right)  ^{\frac{p}{2}}+|\phi(0)|^{p}+|x|^{p}\right]  ,
\end{array}
\]
where $C$ depends only on $T$, $p$, $L_{1}$, $L_{2}$, $L_{3}$.
\end{theorem}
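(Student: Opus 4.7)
The plan is to mimic, in the $L^p$ setting, the fixed-point argument used in Hu-JX for the $L^2$ case. Introduce the Banach space
\[
\mathcal{H}_p:=L_{\mathcal{F}}^{p}(\Omega;C([0,T^{\prime}];\mathbb{R}))\times L_{\mathcal{F}}^{2,p}(0,T^{\prime};\mathbb{R}^{1\times d})
\]
with the norm $\Vert(y,z)\Vert_{\mathcal{H}_p}^p=\mathbb{E}[\sup_{s}|y_s|^p+(\int_0^{T^{\prime}}|z_s|^2 ds)^{p/2}]$. For each input $(y,z)\in\mathcal{H}_p$, first solve the decoupled forward SDE $dX_s=b(s,X_s,y_s,z_s)ds+\sigma(s,X_s,y_s,z_s)dB_s$ with $X_0=x$, and then, with this $X$ in hand, solve the BSDE $dY_s=-g(s,X_s,Y_s,Z_s)ds+Z_s dB_s$ with $Y_{T^{\prime}}=\phi(X_{T^{\prime}})$. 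Under Assumption \ref{assum-1}(i) and the linear growth from Remark \ref{linear growth}, this is a decoupled FBSDE whose coefficients satisfy the hypotheses of Lemma \ref{sde-bsde}, so it has a unique $L^p$-solution $(X,Y,Z)$. The map $\Gamma:(y,z)\mapsto(Y,Z)$ is therefore well-defined from $\mathcal{H}_p$ into itself.

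Next I would show that $\Gamma$ is a contraction. Take two inputs $(y^i,z^i)\in\mathcal{H}_p$ with corresponding outputs $(X^i,Y^i,Z^i)$ and write $\Delta$ for the differences. The forward difference satisfies an SDE whose drift and diffusion perturbations are controlled by
\[
|\Delta b|\le L_1|\Delta X|+L_2(|\Delta y|+|\Delta z|),\qquad |\Delta\sigma|\le L_1|\Delta X|+L_2|\Delta y|+L_3|\Delta z|,
\]
so BDG and Gronwall give $\mathbb{E}[\sup_s|\Delta X_s|^p]$ bounded by a constant multiple of $\mathbb{E}[(\int L_2(|\Delta y|+|\Delta z|)ds)^p+(\int(L_2|\Delta y|+L_3|\Delta z|)^2 ds)^{p/2}]$. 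The backward difference $(\Delta Y,\Delta Z)$ satisfies a BSDE with terminal $\phi(X^1_{T^{\prime}})-\phi(X^2_{T^{\prime}})$ and driver perturbation bounded by $L_1|\Delta X|$, so the standard $L^p$ estimate for BSDEs (which is the source of the constant $C_p$ in Lemma 5.1 of Hu-JX) controls $\Vert(\Delta Y,\Delta Z)\Vert_{\mathcal{H}_p}^p$ by a constant times $\mathbb{E}[\sup_s|\Delta X_s|^p]$. Concatenating these bounds, applying H\"{o}lder to convert $\int|\Delta y|^p ds$ into $T\sup|\Delta y|^p$ (producing the $T^p$ factor after combining with the drift's extra $T^{p-1}$) and $\int|\Delta z|^p ds$ into $(\int|\Delta z|^2 ds)^{p/2}$ (producing the $T^{p/2}$ factor, and requiring $p\ge 2$), and using $(a+b)^p\le 2^{p-1}(a^p+b^p)$, one arrives at
\[
\Vert\Gamma(y^1,z^1)-\Gamma(y^2,z^2)\Vert_{\mathcal{H}_p}^p\le C_p 2^{p-1}\bigl[L_2^p(T^{p/2}+T^p)+L_3^p\bigr]\Vert(y^1-y^2,z^1-z^2)\Vert_{\mathcal{H}_p}^p.
\]
By hypothesis this coefficient is strictly less than $1$, so the Banach fixed-point theorem gives a unique $(Y,Z)\in\mathcal{H}_p$, and defining $X$ as the forward SDE driven by this $(Y,Z)$ yields the unique solution of \eqref{new-new-new-new-1}. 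The a priori estimate follows by applying Lemma \ref{sde-bsde} to the decoupled FBSDE at the fixed point and absorbing the $L_2,L_3$-contributions into the left-hand side via the contraction gap $1-C_p 2^{p-1}[L_2^p(T^{p/2}+T^p)+L_3^p]$.

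The main obstacle is the second step: the bookkeeping of constants in the contraction estimate. In particular, the sup-norm on $Y$ and the mixed $L^{2,p}$ norm on $Z$ are not interchangeable, so one must apply H\"{o}lder with care at each passage between the different norms, and the separation of the $L_2$-contribution (which enters both $b$ and $\sigma$ and thus gets multiplied by $T^{p/2}+T^p$) from the $L_3$-contribution (which appears only in $\sigma$ and thus gives a pure $L_3^p$ factor without any power of $T$) is precisely what produces the sharp contraction constant appearing in the hypothesis.
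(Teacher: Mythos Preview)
Your proposal is correct and follows essentially the same route as the paper: define the map $\Gamma:(y,z)\mapsto(Y,Z)$ via the decoupled forward SDE followed by the BSDE, show it is a contraction on the $L^p$ space with exactly the constant $C_p 2^{p-1}[L_2^p(T^{p/2}+T^p)+L_3^p]$, and conclude by the Banach fixed-point theorem. The only cosmetic difference is that the paper packages the combined forward--backward difference estimate into a single call to Lemma~5.1 of \cite{Hu-JX}, whereas you unpack it into a forward BDG/Gronwall step followed by a backward $L^p$-BSDE estimate; the content is the same, and your explanation of how the $T^{p/2}$, $T^p$ and bare $L_3^p$ factors arise is accurate.
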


\begin{proof}
Let $\mathcal{L}$ denote the space of all $\mathbb{F}$-adapted processes
$(Y,Z)$ such that
\[
\mathbb{E}\left[  \sup\limits_{0\leq t\leq T^{\prime}}|Y_{t}|^{p}+\left(
\int_{0}^{T^{\prime}}|Z_{t}|^{2}dt\right)  ^{\frac{p}{2}}\right]  <\infty.
\]
For each given $(y,z)\in\mathcal{L}$, consider the following decoupled FBSDE:
\begin{equation}
\left\{
\begin{array}
[c]{rl}%
dX_{t}= & b(t,X_{t},y_{t},z_{t})dt+\sigma(t,X_{t},y_{t},z_{t})dB_{t},\\
dY_{t}= & -g(t,X_{t},Y_{t},Z_{t})dt+Z_{t}dB_{t},\\
X_{0}= & x,\ Y_{T^{\prime}}=\phi(X_{T^{\prime}}).
\end{array}
\right.  \label{fbsde-y0}%
\end{equation}
Under Lipschitz conditions for $b$, $\sigma$, $g$ and $\phi$, it is easy to
deduce that the solution $(Y,Z)$ to \eqref{fbsde-y0} belongs to $\mathcal{L}$.
Denote the operator $(y,z)\rightarrow(Y,Z)$ by $\Gamma$. For two elements
$(y^{i},z^{i})\in\mathcal{L}$, $i=1,2$, let $(X^{i},Y^{i},Z^{i})$ be the
corresponding solution to \eqref{fbsde-y0}. Set
\[
\hat{y}_{t}=y_{t}^{1}-y_{t}^{2},\text{ }\hat{z}_{t}=z_{t}^{1}-z_{t}^{2},\text{
}\hat{X}_{t}=X_{t}^{1}-X_{t}^{2},\text{ }\hat{Y}_{t}=Y_{t}^{1}-Y_{t}%
^{2},\text{ }\hat{Z}_{t}=Z_{t}^{1}-Z_{t}^{2}.
\]
Due to Lemma 5.1 in \cite{Hu-JX}, we obtain
\begin{equation}%
\begin{array}
[c]{l}%
\mathbb{E}\left[  \sup\limits_{0\leq t\leq T^{\prime}}\left(  |\hat{X}%
_{t}|^{p}+|\hat{Y}_{t}|^{p}\right)  +\left(  \int_{0}^{T^{\prime}}|\hat{Z}%
_{t}|^{2}dt\right)  ^{\frac{p}{2}}\right] \\
\leq C_{p}\mathbb{E}\left[  \left(  \int_{0}^{T^{\prime}}\left\vert
b(t,X_{t}^{2},y_{t}^{1},z_{t}^{1})-b(t,X_{t}^{2},y_{t}^{2},z_{t}%
^{2})\right\vert dt\right)  ^{p}\right. \\
\ \ \left.  +\left(  \int_{0}^{T^{\prime}}\left\vert \sigma(t,X_{t}^{2}%
,y_{t}^{1},z_{t}^{1})-\sigma(t,X_{t}^{2},y_{t}^{2},z_{t}^{2})\right\vert
^{2}dt\right)  ^{\frac{p}{2}}\right] \\
\leq C_{p}2^{p-1}\left[  L_{2}^{p}\left(  \left(  T^{\prime}\right)
^{\frac{p}{2}}+\left(  T^{\prime}\right)  ^{p}\right)  +L_{3}^{p}\right]
\mathbb{E}\left[  \sup\limits_{0\leq t\leq T^{\prime}}|\hat{y}_{t}%
|^{p}+\left(  \int_{0}^{T^{\prime}}|\hat{z}_{t}|^{2}dt\right)  ^{\frac{p}{2}%
}\right] \\
\leq C_{p}2^{p-1}\left[  L_{2}^{p}\left(  T^{\frac{p}{2}}+T^{p}\right)
+L_{3}^{p}\right]  \mathbb{E}\left[  \sup\limits_{0\leq t\leq T^{\prime}}%
|\hat{y}_{t}|^{p}+\left(  \int_{0}^{T^{\prime}}|\hat{z}_{t}|^{2}dt\right)
^{\frac{p}{2}}\right]  .
\end{array}
\label{fbsde-delty}%
\end{equation}
Since $C_{p}2^{p-1}\left[  L_{2}^{p}\left(  T^{\frac{p}{2}}+T^{p}\right)
+L_{3}^{p}\right]  <1$, the operator $\Gamma$ is a contraction mapping and has
a unique fixed point $(Y,Z)$.\ Let $X$ be the solution to (\ref{fbsde-y0})
with respect to the fixed point $(Y,Z)$. Thus, $(X,Y,Z)$ is the unique
solution to (\ref{new-new-new-new-1}). Following the same steps in Theorem 2.2
in \cite{Hu-JX}, we can obtain the estimate.
\end{proof}

\subsection{The proof of Theorem \ref{th-vis-uni}}

\label{subse-pf}

In order to prove this theorem, we need the following lemmas.

\begin{lemma}
\label{le-uni2} Suppose that $\sigma$ is independent of $y$ and $z$ and
Assumption \ref{assum-1} (i) holds. Let $W_{1}$ be a viscosity subsolution and
$W_{2}$ be a viscosity supersolution to (\ref{eq-hjb-yz}). Furthermore, assume
that $W_{1}$ and $W_{2}$ are Lipschitz continuous with respect to $x$. Then
the function $w:=W_{1}-W_{2}$ is a viscosity subsolution to the following
equation
\begin{equation}
\left\{
\begin{array}
[c]{l}%
w_{t}\left(  t,x\right)  +\sup\limits_{u\in U}\left\{  \frac{1}{2}%
\mathrm{tr}[\sigma\sigma^{\intercal}(t,x,u)D^{2}w(t,x)]+C\left(  1+\left\vert
x\right\vert \right)  \left\vert Dw\left(  t,x\right)  \right\vert \right. \\
\text{\ \ \ \ \ \ \ \ \ \ \ \ \ \ \ \ \ }\left.  +C\left\vert w\left(
t,x\right)  \right\vert \right\}  =0,\\
w\left(  T,x\right)  =0,\text{ }\left(  t,x\right)  \in\lbrack0,T)\times
\mathbb{R}^{n},
\end{array}
\right.  \label{eq-uni1}%
\end{equation}
where $C$ is a constant depending only on the Lipschitz constants of $b$,
$\sigma$, $g$, $W_{1}$ and $W_{2}$.
\end{lemma}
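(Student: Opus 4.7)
I plan to use the standard variable-doubling technique from the Crandall--Ishii--Lions theory, adapted to accommodate the coupling through $p^{\intercal}\sigma$ that appears inside the coefficients $b$ and $g$. Fix $\psi\in C_{b}^{2,3}([0,T]\times\mathbb{R}^{n})$ such that $w-\psi$ attains a local maximum at $(t_{0},x_{0})\in[0,T)\times\mathbb{R}^{n}$ with $w(t_{0},x_{0})=\psi(t_{0},x_{0})$. By the usual modification $\psi\leftsquigarrow\psi+\eta(|x-x_{0}|^{4}+(t-t_{0})^{2})$, we may assume that the maximum is strict on a closed ball $\overline{B}$ around $(t_{0},x_{0})$, and the subsolution inequality to be verified reduces to checking it for the original $\psi$ after letting $\eta\downarrow 0$. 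For $\varepsilon>0$ introduce
\begin{equation*}
\Phi_{\varepsilon}(t,x,y)=W_{1}(t,x)-W_{2}(t,y)-\psi(t,x)-\tfrac{1}{2\varepsilon}|x-y|^{2},
\end{equation*}
which attains a maximum on $\overline{B}\times\overline{B}_{x_{0}}$ at some $(t_{\varepsilon},x_{\varepsilon},y_{\varepsilon})$. Classical arguments give $(t_{\varepsilon},x_{\varepsilon},y_{\varepsilon})\to(t_{0},x_{0},x_{0})$, $\varepsilon^{-1}|x_{\varepsilon}-y_{\varepsilon}|^{2}\to 0$, and moreover $p_{\varepsilon}:=(x_{\varepsilon}-y_{\varepsilon})/\varepsilon$ stays bounded by $2L_{W}$ thanks to the Lipschitz continuity of $W_{1}$ and $W_{2}$ in $x$.

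Next I apply the parabolic Crandall--Ishii lemma to the pair $W_{1}(t,x)$ and $W_{2}(t,y)$ with the penalty $\psi(t,x)+\tfrac{1}{2\varepsilon}|x-y|^{2}$. This produces $a_{\varepsilon}-b_{\varepsilon}=\psi_{t}(t_{\varepsilon},x_{\varepsilon})$, test triples $(a_{\varepsilon},D\psi(t_{\varepsilon},x_{\varepsilon})+p_{\varepsilon},X_{\varepsilon})\in\bar{\mathcal{P}}^{2,+}W_{1}(t_{\varepsilon},x_{\varepsilon})$ and $(b_{\varepsilon},p_{\varepsilon},Y_{\varepsilon})\in\bar{\mathcal{P}}^{2,-}W_{2}(t_{\varepsilon},y_{\varepsilon})$, together with the matrix inequality that yields
\begin{equation*}
\mathrm{tr}\bigl[\sigma\sigma^{\intercal}(t_{\varepsilon},x_{\varepsilon},u)X_{\varepsilon}-\sigma\sigma^{\intercal}(t_{\varepsilon},y_{\varepsilon},u)Y_{\varepsilon}\bigr]\leq\mathrm{tr}\bigl[\sigma\sigma^{\intercal}(t_{\varepsilon},x_{\varepsilon},u)D^{2}\psi\bigr]+\tfrac{C}{\varepsilon}|x_{\varepsilon}-y_{\varepsilon}|^{2}.
\end{equation*}
The viscosity sub-/supersolution inequalities at those two points, subtracted and combined with $\inf_{u}H_{1}-\inf_{u}H_{2}\leq\sup_{u}(H_{1}-H_{2})$, produce
\begin{equation*}
\psi_{t}(t_{\varepsilon},x_{\varepsilon})+\sup_{u\in U}\bigl(H_{1}^{\varepsilon}(u)-H_{2}^{\varepsilon}(u)\bigr)\geq 0,
\end{equation*}
where $H_{i}^{\varepsilon}(u)$ are the two Hamiltonians evaluated with the data just obtained.

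The main work is controlling $H_{1}^{\varepsilon}(u)-H_{2}^{\varepsilon}(u)$ uniformly in $u$. I split the drift contribution as
\begin{equation*}
(D\psi+p_{\varepsilon})^{\intercal}b_{1}-p_{\varepsilon}^{\intercal}b_{2}=D\psi^{\intercal}b_{1}+p_{\varepsilon}^{\intercal}(b_{1}-b_{2}),
\end{equation*}
and similarly decompose the $g$ term. Using Assumption \ref{assum-1}(i), the difference of $Z$-arguments is
\begin{equation*}
(D\psi+p_{\varepsilon})^{\intercal}\sigma(t_{\varepsilon},x_{\varepsilon},u)-p_{\varepsilon}^{\intercal}\sigma(t_{\varepsilon},y_{\varepsilon},u)=D\psi^{\intercal}\sigma(t_{\varepsilon},x_{\varepsilon},u)+p_{\varepsilon}^{\intercal}[\sigma(t_{\varepsilon},x_{\varepsilon},u)-\sigma(t_{\varepsilon},y_{\varepsilon},u)],
\end{equation*}
whose second summand is bounded by $L_{1}|p_{\varepsilon}||x_{\varepsilon}-y_{\varepsilon}|=L_{1}\varepsilon^{-1}|x_{\varepsilon}-y_{\varepsilon}|^{2}\to 0$. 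Thus the total $H_{1}^{\varepsilon}(u)-H_{2}^{\varepsilon}(u)$ is bounded by
\begin{equation*}
\tfrac{1}{2}\mathrm{tr}[\sigma\sigma^{\intercal}(t_{\varepsilon},x_{\varepsilon},u)D^{2}\psi]+|D\psi|\cdot|b_{1}|+|p_{\varepsilon}|\cdot L_{2}(|W_{1}-W_{2}|+\text{vanishing})+L_{1}(|W_{1}-W_{2}|+\text{vanishing})+o(1),
\end{equation*}
with $|b_{1}|\leq C(1+|x_{\varepsilon}|)$ from the linear growth (the $W_{1}$ and $Z$ entries are bounded by $C(1+|x_{\varepsilon}|)$). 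Absorbing the constants using $|p_{\varepsilon}|\leq 2L_{W}$ and $|W_{1}(t_{\varepsilon},x_{\varepsilon})-W_{2}(t_{\varepsilon},y_{\varepsilon})|\to|w(t_{0},x_{0})|$, passing to a subsequence along which $p_{\varepsilon}$ converges, and letting $\varepsilon\downarrow 0$, I obtain
\begin{equation*}
\psi_{t}(t_{0},x_{0})+\sup_{u\in U}\Bigl\{\tfrac{1}{2}\mathrm{tr}[\sigma\sigma^{\intercal}(t_{0},x_{0},u)D^{2}\psi]+C(1+|x_{0}|)|D\psi(t_{0},x_{0})|+C|w(t_{0},x_{0})|\Bigr\}\geq 0,
\end{equation*}
which is the desired subsolution inequality. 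The principal obstacle I anticipate is the bookkeeping in the fourth step: the $Z$-slot in $b$ and $g$ carries the composite $p^{\intercal}\sigma$, so I must verify that every contribution involving $|p_{\varepsilon}||x_{\varepsilon}-y_{\varepsilon}|$ collapses via the $\varepsilon^{-1}|x_{\varepsilon}-y_{\varepsilon}|^{2}\to 0$ fact, and that the remaining nonvanishing pieces can all be bundled into the three nonnegative terms appearing on the right-hand side of \eqref{eq-uni1}.
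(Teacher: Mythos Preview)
Your argument is correct and follows the same doubling/Crandall--Ishii route as the paper, with one harmless variation: you double only the spatial variable and keep a common time $t$, whereas the paper penalizes $|t-s|^{2}/\alpha^{2}$ as well and sends $\alpha\to 0$ first; your choice is slightly cleaner since it avoids the moduli $\rho_{\epsilon}(|t-s|)$ coming from the $t$-continuity of $b,\sigma,g$. One small correction in your bookkeeping: the $Z$-difference $(D\psi+p_{\varepsilon})^{\intercal}\sigma(t_{\varepsilon},x_{\varepsilon},u)-p_{\varepsilon}^{\intercal}\sigma(t_{\varepsilon},y_{\varepsilon},u)$ has the \emph{non}-vanishing piece $D\psi^{\intercal}\sigma(t_{\varepsilon},x_{\varepsilon},u)$, so its contribution through the Lipschitz bounds on $b$ and $g$ produces an extra $C(1+|x_{\varepsilon}|)|D\psi|$ term (not ``vanishing''), which nonetheless lands exactly in the $C(1+|x|)|Dw|$ slot of \eqref{eq-uni1}.
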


\begin{proof}
Let $\varphi\in C_{b}^{2,3}\left(  \left[  0,T\right]  \times\mathbb{R}%
^{n}\right)  $ and let $\left(  t_{0},x_{0}\right)  \in(0,T)\times
\mathbb{R}^{n}$ be a global maximum point of $W_{1}-W_{2}-\varphi$
with$\ w(t_{0},x_{0})=\varphi(t_{0},x_{0})$. Define the function%
\[
\psi_{\epsilon,\alpha}\left(  t,x,s,y\right)  =W_{1}\left(  t,x\right)
-W_{2}\left(  s,y\right)  -\frac{\left\vert x-y\right\vert ^{2}}{\epsilon^{2}%
}-\frac{\left\vert t-s\right\vert ^{2}}{\alpha^{2}}-\varphi\left(  t,x\right)
,
\]
where $\epsilon$, $\alpha$ are positive parameters which are devoted to tend
to zero. By Lemma 3.7 in \cite{Baeles-BP}, there exists a sequence $\left(
t_{\epsilon,\alpha},s_{\epsilon,\alpha},x_{\epsilon,\alpha},y_{\epsilon
,\alpha}\right)  $ such that

(i) $\left(  t_{\epsilon,\alpha},x_{\epsilon,\alpha},s_{\epsilon,\alpha
},y_{\epsilon,\alpha}\right)  $ is a global maximum point of $\psi
_{\epsilon,\alpha}$ in $\left[  0,T\right]  \times\bar{B}_{R}\times\left[
0,T\right]  \times\bar{B}_{R}$, where $\bar{B}_{R}$ is a ball with a large
radius $R$;

(ii) $\left(  t_{\epsilon,\alpha},x_{\epsilon,\alpha}\right)  $, $\left(
s_{\epsilon,\alpha},y_{\epsilon,\alpha}\right)  \rightarrow\left(  t_{0}%
,x_{0}\right)  $ as $\left(  \epsilon,\alpha\right)  \rightarrow0$;

(iii) $\frac{\left\vert x_{\epsilon,\alpha}-y_{\epsilon,\alpha}\right\vert
^{2}}{\epsilon^{2}}$ and $\frac{\left\vert t_{\epsilon,\alpha}-s_{\epsilon
,\alpha}\right\vert ^{2}}{\alpha^{2}}$ are bounded and tend to zero when
$\left(  \epsilon,\alpha\right)  \rightarrow0$;

(iv) there exist $X$, $Y\in\mathbb{S}^{n}$ such that
\[%
\begin{array}
[c]{c}%
( \frac{2\left(  t_{\epsilon,\alpha}-s_{\epsilon,\alpha}\right)  }{\alpha^{2}%
}+\varphi_{t}\left(  t_{\epsilon,\alpha},x_{\epsilon,\alpha}\right)
,\frac{2\left(  x_{\epsilon,\alpha}-y_{\epsilon,\alpha}\right)  }{\epsilon
^{2}}+D\varphi\left(  t_{\epsilon,\alpha},x_{\epsilon,\alpha}\right)  ,X)
\in\bar{D}^{2,+}W_{1}( t_{\epsilon,\alpha},x_{\epsilon,\alpha}) ,\\
\left(  \frac{2\left(  t_{\epsilon,\alpha}-s_{\epsilon,\alpha}\right)
}{\alpha^{2}},\frac{2\left(  x_{\epsilon,\alpha}-y_{\epsilon,\alpha}\right)
}{\epsilon^{2}},Y\right)  \in\bar{D}^{2,-}W_{2}\left(  s_{\epsilon,\alpha
},y_{\epsilon,\alpha}\right)  ,\\
\left(
\begin{array}
[c]{cc}%
X & 0\\
0 & -Y
\end{array}
\right)  \leq\frac{4}{\epsilon^{2}}\left(
\begin{array}
[c]{cc}%
I & -I\\
-I & I
\end{array}
\right)  +\left(
\begin{array}
[c]{cc}%
D^{2}\varphi\left(  t_{\epsilon,\alpha},x_{\epsilon,\alpha}\right)  & 0\\
0 & 0
\end{array}
\right)  ,
\end{array}
\]
where $\bar{D}^{2,+}$ and $\bar{D}^{2,-}$ can be found in
\cite{Crandall-lecture}. Since $W_{1}$ and $W_{2}$ are sub and supersolution
to (\ref{eq-hjb-yz}) respectively, by (iv) we have
\begin{equation}%
\begin{array}
[c]{rl}
& \inf\limits_{u\in U}H(t_{\epsilon,\alpha},x_{\epsilon,\alpha},W_{1}%
(t_{\epsilon,\alpha},x_{\epsilon,\alpha}),\frac{2\left(  x_{\epsilon,\alpha
}-y_{\epsilon,\alpha}\right)  }{\epsilon^{2}}+D\varphi\left(  t_{\epsilon
,\alpha},x_{\epsilon,\alpha}\right)  ,X,u)\\
& +\frac{2\left(  t_{\epsilon,\alpha}-s_{\epsilon,\alpha}\right)  }{\alpha
^{2}}+\varphi_{t}\left(  t_{\epsilon,\alpha},x_{\epsilon,\alpha}\right)
\geq0,
\end{array}
\label{w1-ineq}%
\end{equation}%
\begin{equation}
\frac{2\left(  t_{\epsilon,\alpha}-s_{\epsilon,\alpha}\right)  }{\alpha^{2}%
}+\inf\limits_{u\in U}H(s_{\epsilon,\alpha},y_{\epsilon,\alpha},W_{2}%
(s_{\epsilon,\alpha},y_{\epsilon,\alpha}),\frac{2\left(  x_{\epsilon,\alpha
}-y_{\epsilon,\alpha}\right)  }{\epsilon^{2}},Y,u)\leq0. \label{w2-ineq}%
\end{equation}
It follows from (\ref{w1-ineq}) and (\ref{w2-ineq}) that%
\begin{equation}%
\begin{array}
[c]{l}%
\varphi_{t}\left(  t_{\epsilon,\alpha},x_{\epsilon,\alpha}\right)
+\sup\limits_{u\in U}\{H(t_{\epsilon,\alpha},x_{\epsilon,\alpha}%
,W_{1}(t_{\epsilon,\alpha},x_{\epsilon,\alpha}),\frac{2\left(  x_{\epsilon
,\alpha}-y_{\epsilon,\alpha}\right)  }{\epsilon^{2}}+D\varphi\left(
t_{\epsilon,\alpha},x_{\epsilon,\alpha}\right)  ,X,u)\\
-H(s_{\epsilon,\alpha},y_{\epsilon,\alpha},W_{2}(s_{\epsilon,\alpha
},y_{\epsilon,\alpha}),\frac{2\left(  x_{\epsilon,\alpha}-y_{\epsilon,\alpha
}\right)  }{\epsilon^{2}},Y,u)\}\geq0.
\end{array}
\label{new-1234569}%
\end{equation}
By (iv) and Lipschitz continuity of $\sigma$, $b$ and $W_{i}$, then%
\begin{equation}%
\begin{array}
[c]{l}%
\mathrm{tr}[\sigma\sigma^{\intercal}(t_{\epsilon,\alpha},x_{\epsilon,\alpha
},u)X]-\mathrm{tr}[\sigma\sigma^{\intercal}(s_{\epsilon,\alpha},y_{\epsilon
,\alpha},u)Y]\\
\leq\frac{4}{\epsilon^{2}}\mathrm{tr}[(\sigma(t_{\epsilon,\alpha}%
,x_{\epsilon,\alpha},u)-\sigma(s_{\epsilon,\alpha},y_{\epsilon,\alpha
},u))(\sigma(t_{\epsilon,\alpha},x_{\epsilon,\alpha},u)-\sigma(s_{\epsilon
,\alpha},y_{\epsilon,\alpha},u))^{^{\intercal}}]\\
\text{ \ }+\mathrm{tr}[\sigma\sigma^{\intercal}(t_{\epsilon,\alpha
},x_{\epsilon,\alpha},u)D^{2}\varphi\left(  t_{\epsilon,\alpha},x_{\epsilon
,\alpha}\right)  ]\\
\leq\frac{4}{\epsilon^{2}}\rho_{\epsilon}(|t_{\epsilon,\alpha}-s_{\epsilon
,\alpha}|)+C\frac{\left\vert x_{\epsilon,\alpha}-y_{\epsilon,\alpha
}\right\vert ^{2}}{\epsilon^{2}}+\mathrm{tr}[\sigma\sigma^{\intercal
}(t_{\epsilon,\alpha},x_{\epsilon,\alpha},u)D^{2}\varphi\left(  t_{\epsilon
,\alpha},x_{\epsilon,\alpha}\right)  ],
\end{array}
\label{new-1234567}%
\end{equation}%
\[
\left\vert \frac{2\left(  x_{\epsilon,\alpha}-y_{\epsilon,\alpha}\right)
}{\epsilon^{2}}+D\varphi\left(  t_{\epsilon,\alpha},x_{\epsilon,\alpha
}\right)  \right\vert \leq L_{W_{1}}\text{, }\left\vert \frac{2\left(
x_{\epsilon,\alpha}-y_{\epsilon,\alpha}\right)  }{\epsilon^{2}}\right\vert
\leq L_{W_{2}}%
\]
and%
\[%
\begin{array}
[c]{l}%
\left(  \frac{2\left(  x_{\epsilon,\alpha}-y_{\epsilon,\alpha}\right)
}{\epsilon^{2}}+D\varphi\left(  t_{\epsilon,\alpha},x_{\epsilon,\alpha
}\right)  \right)  ^{\intercal}b\left(  t_{\epsilon,\alpha},x_{\epsilon
,\alpha},W_{1}(t_{\epsilon,\alpha},x_{\epsilon,\alpha}),\right. \\
\text{\ \ \ \ \ \ \ \ \ \ \ \ \ }\left(  \frac{2\left(  x_{\epsilon,\alpha
}-y_{\epsilon,\alpha}\right)  }{\epsilon^{2}}+D\varphi\left(  t_{\epsilon
,\alpha},x_{\epsilon,\alpha}\right)  \right)  ^{\intercal}\sigma
(t_{\epsilon,\alpha},x_{\epsilon,\alpha},u),u)\\
\text{ \ }-\left(  \frac{2\left(  x_{\epsilon,\alpha}-y_{\epsilon,\alpha
}\right)  }{\epsilon^{2}}\right)  ^{\intercal}b(s_{\epsilon,\alpha
},y_{\epsilon,\alpha},W_{2}(s_{\epsilon,\alpha},y_{\epsilon,\alpha}),\left(
\frac{2\left(  x_{\epsilon,\alpha}-y_{\epsilon,\alpha}\right)  }{\epsilon^{2}%
}\right)  ^{\intercal}\sigma(s_{\epsilon,\alpha},y_{\epsilon,\alpha},u),u)\\
\leq C\left(  \left\vert \frac{2\left(  x_{\epsilon,\alpha}-y_{\epsilon
,\alpha}\right)  }{\epsilon^{2}}\right\vert \rho_{\epsilon}(|t_{\epsilon
,\alpha}-s_{\epsilon,\alpha}|)+\left\vert w\left(  t_{\epsilon,\alpha
},x_{\epsilon,\alpha}\right)  \right\vert \right. \\
\ \ \ \ \ \ \left.  +\left(  1+\left\vert x_{\epsilon,\alpha}\right\vert
\right)  \left\vert D\varphi\left(  t_{\epsilon,\alpha},x_{\epsilon,\alpha
}\right)  \right\vert +\left\vert x_{\epsilon,\alpha}-y_{\epsilon,\alpha
}\right\vert +\frac{\left\vert x_{\epsilon,\alpha}-y_{\epsilon,\alpha
}\right\vert ^{2}}{\epsilon^{2}}\right)  ,
\end{array}
\]
where $L_{W_{i}}$ is the Lipschitz constant for $W_{i}$ and $\rho_{\epsilon
}(s)\rightarrow0$ as $s\rightarrow0^{+}$ for fixed $\epsilon$. We can do the
same analysis for $g$. For (\ref{new-1234569}) we first let $\alpha
\rightarrow0$. And then let $\epsilon\rightarrow0$, we have
\[%
\begin{array}
[c]{rl}
& \sup\limits_{u\in U}\left\{  \frac{1}{2}\mathrm{tr}[\sigma\sigma^{\intercal
}(t_{0},x_{0},u)D^{2}\varphi(t_{0},x_{0})]+C\left(  1+\left\vert
x_{0}\right\vert \right)  \left\vert D\varphi\left(  t_{0},x_{0}\right)
\right\vert +C\left\vert w\left(  t_{0},x_{0}\right)  \right\vert \right\} \\
& +\varphi_{t}\left(  t_{0},x_{0}\right)  \geq0.
\end{array}
\]
Therefore, $w$ is a subsolution to (\ref{eq-uni1}).
\end{proof}

\begin{remark}
\label{re-appen}When $\sigma$ depends on $y$, the right hand side of
(\ref{new-1234567}) will include the term%
\[
\frac{C}{\epsilon^{2}}|W_{1}(t_{\epsilon,\alpha},x_{\epsilon,\alpha}%
)-W_{2}(t_{\epsilon,\alpha},x_{\epsilon,\alpha})|^{2},
\]
which tends to $\infty$ as $\epsilon\rightarrow0$. Thus the above method does
not work.
\end{remark}

Set $\psi\left(  x\right)  =\left[  \log\left(  \left(  \left\vert
x\right\vert ^{2}+1\right)  ^{\frac{1}{2}}\right)  \right]  ^{2}$,
$x\in\mathbb{R}^{n}$.

\begin{lemma}
\label{le-uni3} Suppose Assumption \ref{assum-1} (i) holds. Then, for any
$A>0$, there exists a constant $C_{1}>0$ such that the function
\[
\chi\left(  t,x\right)  =\exp\left\{  \left(  C_{1}\left(  T-t\right)
+A\right)  \psi(x)\right\}
\]
satisfies
\[
\chi_{t}\left(  t,x\right)  +\sup\limits_{u\in U}\left\{  \frac{1}%
{2}\mathrm{tr}[\sigma\sigma^{\intercal}(t,x,u)D^{2}\chi(t,x)]+C\left(
1+\left\vert x\right\vert \right)  \left\vert D\chi\left(  t,x\right)
\right\vert +C\chi\left(  t,x\right)  \right\}  <0
\]
in $\left[  t_{1},T\right]  \times\mathbb{R}^{n}$, where $t_{1}=T-\frac
{A}{C_{1}}$.
\end{lemma}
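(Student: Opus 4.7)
The plan is to exploit the explicit form $\chi(t,x)=e^{\lambda(t)\psi(x)}$ with $\lambda(t):=C_{1}(T-t)+A$ and reduce the desired inequality, via the chain rule, to a polynomial inequality in $\sqrt{\psi}$ whose leading coefficient can be tuned by enlarging $C_{1}$. Differentiating gives $\chi_{t}=-C_{1}\psi\chi$, $D\chi=\lambda\chi\,D\psi$, and $D^{2}\chi=\lambda\chi\,D^{2}\psi+\lambda^{2}\chi\,(D\psi)(D\psi)^{\intercal}$. Dividing the target inequality by $\chi>0$ it suffices to show, uniformly on $[t_{1},T]\times\mathbb{R}^{n}$, that
\[
-C_{1}\psi+\sup_{u\in U}\!\Big\{\tfrac{\lambda}{2}\mathrm{tr}[\sigma\sigma^{\intercal}D^{2}\psi]+\tfrac{\lambda^{2}}{2}|\sigma^{\intercal}D\psi|^{2}+C\lambda(1+|x|)|D\psi|+C\Big\}<0.
\]
The crucial observation is that on $[t_{1},T]$ with $t_{1}=T-A/C_{1}$ one has $\lambda(t)\leq C_{1}\cdot A/C_{1}+A=2A$, so $\lambda$ is bounded by a constant independent of $C_{1}$, even though $C_{1}$ will be taken large.

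Next I compute the derivatives of $\psi$ explicitly. Writing $r(x)=\tfrac12\log(|x|^{2}+1)$ so that $\psi=r^{2}$, one gets $Dr=x/(|x|^{2}+1)$, $D^{2}r=I/(|x|^{2}+1)-2xx^{\intercal}/(|x|^{2}+1)^{2}$, hence $D\psi=2rDr$ and $D^{2}\psi=2(Dr)(Dr)^{\intercal}+2r\,D^{2}r$. The key pointwise bounds, which follow directly from these formulas together with $r=\sqrt{\psi}$, $|x||Dr|\leq 1$, and the linear growth $|\sigma(t,x,u)|\leq L(1+|x|)$ given by Assumption \ref{assum-1}(i) and Remark \ref{linear growth}, are
\[
(1+|x|)|D\psi|\leq K(1+\sqrt{\psi}),\quad |\sigma^{\intercal}D\psi|^{2}\leq K(1+\psi),\quad |\mathrm{tr}[\sigma\sigma^{\intercal}D^{2}\psi]|\leq K(1+\sqrt{\psi}),
\]
for a constant $K=K(n,L)$ independent of $C_{1}$, $A$, $t$ and $u$. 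Indeed, $|x|^{2}/(|x|^{2}+1)\leq 1$ turns the $(1+|x|)^{2}$ factors from $|\sigma|^{2}$ into bounded quantities against the decaying factors in $D\psi, D^{2}\psi$, while the remaining $r$-factor supplies the $\sqrt{\psi}$.

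Substituting these bounds together with $\lambda\leq 2A$ into the reduced inequality yields an estimate of the shape
\[
\text{LHS}/\chi\leq(-C_{1}+\alpha A^{2})\,\psi+\beta(A)\sqrt{\psi}+\gamma(A),
\]
with $\alpha$ absolute and $\beta,\gamma$ explicit polynomials in $A$. The right-hand side is a quadratic in $\sqrt{\psi}$; choosing $C_{1}=C_{1}(A)$ large enough that $-C_{1}+\alpha A^{2}$ is sufficiently negative, the discriminant $\beta(A)^{2}+4(C_{1}-\alpha A^{2})\gamma(A)$ can be made \emph{negative}, so the quadratic is uniformly $<0$ on $[0,\infty)\ni\sqrt{\psi}$. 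The main obstacle is precisely this last step at $x=0$, where $\psi$, $D\psi$, and $D^{2}\psi$ all vanish so the only surviving term on the right is $+C$; it is handled by the explicit discriminant calculation just described, which shows that making the coefficient of $\psi$ sufficiently negative forces the quadratic $a\xi^{2}+b\xi+c$ to be strictly negative for all $\xi\geq 0$ once $-a\gg b^{2}/|c|$, i.e., once $C_{1}$ is chosen large enough depending on $A$ and the fixed constants $K$, $C$, $L$.
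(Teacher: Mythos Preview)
Your reduction is correct up to the point where you obtain a quadratic inequality in $\xi=\sqrt{\psi}\ge 0$ of the form
\[
p(\xi)=(-C_{1}+\alpha A^{2})\,\xi^{2}+\beta(A)\,\xi+\gamma(A),
\]
with $\beta(A),\gamma(A)>0$ independent of $C_{1}$. The fatal error is the discriminant claim at the end. For any choice of $C_{1}$ you have $p(0)=\gamma(A)>0$, so $p$ cannot be strictly negative on all of $[0,\infty)$; no enlargement of $|a|$ changes the value at $\xi=0$. In fact the discriminant you wrote, $\beta(A)^{2}+4(C_{1}-\alpha A^{2})\gamma(A)$, tends to $+\infty$ as $C_{1}\to\infty$, not to something negative. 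Equivalently, a downward parabola with positive constant term always has a positive root, hence is positive on a neighbourhood of $0$. Your own diagnosis of ``the main obstacle at $x=0$'' is precisely right; the discriminant step does not resolve it.

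This is not a mere slip: with $\psi(x)=\big[\log\sqrt{|x|^{2}+1}\big]^{2}$ one has $\psi(0)=0$, $D\psi(0)=0$, $D^{2}\psi(0)=0$, hence $\chi_{t}(t,0)=0$, $D\chi(t,0)=0$, $D^{2}\chi(t,0)=0$, and the left-hand side of the lemma reduces to $C\chi(t,0)=C>0$ at the origin. The inequality therefore fails at $x=0$ as literally written. The paper itself omits the proof (``It is easy to verify this lemma directly''), so there is no argument to compare against; your computation is the standard one and it exposes this gap. The customary remedy, used implicitly in the literature this lemma is borrowed from, is to shift $\psi$ by a positive constant (e.g.\ work with $\psi+1$, or equivalently multiply $\chi$ by $e^{C_{1}(T-t)+A}$). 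Then the $-C_{1}\psi\chi$ term becomes $-C_{1}(\psi+1)\chi$, the constant $\gamma(A)$ is replaced by $\gamma(A)-C_{1}$, and for $C_{1}$ large both the leading coefficient and the constant term are negative; now the discriminant $\beta^{2}-4(-C_{1}+\alpha A^{2})(\gamma-C_{1})$ \emph{does} become negative. With that adjustment your argument goes through verbatim.
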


It is easy to verify this lemma directly. So we omit the proof.

\textbf{Proof of Theorem \ref{th-vis-uni} }We only need to prove that for any
$\alpha>0$, $w$ satisfies%
\[
\left\vert w\left(  t,x\right)  \right\vert \leq\alpha\chi\left(  t,x\right)
\text{, in }\left[  0,T\right]  \times\mathbb{R}^{n}\text{.}%
\]
It is clear that for some $A>0$,
\[
\lim_{\left\vert x\right\vert \rightarrow\infty}\left\vert w\left(
t,x\right)  \right\vert \exp\left(  -A\left[  \log\left(  \left(  \left\vert
x\right\vert ^{2}+1\right)  ^{\frac{1}{2}}\right)  \right]  ^{2}\right)  =0
\]
uniformly for $t\in\left[  0,T\right]  $. This implies that $\left\vert
w\right\vert -\alpha\chi$ is bounded from above in $\left[  t_{1},T\right]
\times\mathbb{R}^{n}$ and that
\[
M:=\max_{\left[  t_{1},T\right]  \times\mathbb{R}^{n}}\left(  \left\vert
w\left(  t,x\right)  \right\vert -\alpha\chi\left(  t,x\right)  \right)
\exp\left(  -C\left(  T-t\right)  \right)
\]
is achieved at some point $\left(  t_{0},x_{0}\right)  $. Without loss of
generality, we assume that\newline$|w( t_{0},x_{0})| >0$ and $w\left(
t_{0},x_{0}\right)  >0$.

Note that
\[
w\left(  t,x\right)  -\alpha\chi\left(  t,x\right)  \leq\left(  w\left(
t_{0},x_{0}\right)  -\alpha\chi\left(  t_{0},x_{0}\right)  \right)
\exp\left(  -C\left(  t-t_{0}\right)  \right)  .
\]
Then, $\left(  t_{0},x_{0}\right)  $ can be seen as a global maximum point for
$w(t,x)-h\left(  t,x\right)  $ where
\[
h\left(  t,x\right)  =\alpha\chi\left(  t,x\right)  +\left(  w\left(
t_{0},x_{0}\right)  -\alpha\chi\left(  t_{0},x_{0}\right)  \right)
\exp\left(  -C\left(  t-t_{0}\right)  \right)  .
\]

Since $w$ is a viscosity subsolution to (\ref{eq-hjb-yz}), if $t_{0}\in\lbrack
t_{1},T)$, then we have
\[%
\begin{array}
[c]{rl}
& \sup\limits_{u\in U}\left\{  \frac{1}{2}\mathrm{tr}[\sigma\sigma^{\intercal
}(t_{0},x_{0},u)D^{2}h(t_{0},x_{0})]+C\left(  1+\left\vert x_{0}\right\vert
\right)  \left\vert Dh\left(  t_{0},x_{0}\right)  \right\vert +Cw\left(
t_{0},x_{0}\right)  \right\} \\
& +h_{t}\left(  t_{0},x_{0}\right)  \geq0.
\end{array}
\]
That is
\[%
\begin{array}
[c]{rl}
& \sup\limits_{u\in U}\left\{  \frac{1}{2}\mathrm{tr}[\sigma\sigma^{\intercal
}(t_{0},x_{0},u)D^{2}\chi(t_{0},x_{0})]+C\left(  1+\left\vert x_{0}\right\vert
\right)  \left\vert D\chi\left(  t_{0},x_{0}\right)  \right\vert +C\chi\left(
t_{0},x_{0}\right)  \right\} \\
& +\chi_{t}\left(  t_{0},x_{0}\right)  \geq0.
\end{array}
\]
It is a contradiction to Lemma \ref{le-uni3}. Therefore $t_{0}=T$. Since
$\left\vert w\left(  T,x\right)  \right\vert =0$, we obtain%
\[
\left\vert w\left(  t,x\right)  \right\vert \leq\alpha\chi\left(  t,x\right)
\text{, in }\left[  t_{1},T\right]  \times\mathbb{R}^{n}\text{.}%
\]
Thus, let $\alpha\rightarrow0$., we can obtain $\left\vert w\right\vert =0$ in
$\left[  t_{1},T\right]  \times\mathbb{R}^{n}$. Applying successively the same
argument on the interval $\left[  t_{2},t_{1}\right]  $ where $t_{2}=\left(
t_{1}-A/C_{1}\right)  ^{+}$ and then, if $t_{2}>0$ on $\left[  t_{3}%
,t_{2}\right]  $ where $t_{3}=\left(  t_{2}-A/C_{1}\right)  ^{+}$...etc..We
finally obtain that $\left\vert w\right\vert =0$ in $\left[  0,T\right]
\times\mathbb{R}^{n}$. This completes the proof. $\blacksquare$

\end{document}